\theoremstyle{plain}
\newtheorem{theorem}{Theorem}[section]
\newtheorem{lemma}[theorem]{Lemma}
\newtheorem{corollary}[theorem]{Corollary}
\theoremstyle{definition}
\newtheorem{definition}[theorem]{Definition}
\theoremstyle{remark}
\newtheorem*{remark*}{Remark}
\numberwithin{equation}{section}
\newcommand{\dosfilas}[2]{
  \ldelim[{2}{2mm}& #1 &\rdelim]{2}{2mm} \\
  & #2 & &  & &
}
\newcommand\D{{\mathcal D}}
\newcommand\F{{\mathcal F}}
\newcommand\G{{\mathcal G}}
\newcommand\CC{{\mathbb C}}
\newcommand\RR{{\mathbb R}}
\newcommand\NN{{\mathbb N}}
\newcommand\PP{{\mathbb P}}
\newcommand\Aa{{\mathbb A}}
\newcommand\Bb{{\mathbb B}}
\newcommand\X{{\Theta}}
\newcommand\x{{\theta}}
\newcommand\p{\mbox{$\mathfrak{p}$}}
\newcommand\pp{\mbox{$\mathfrak{p}_{F}$}}
\newcommand\rank{\operatorname{rank}}
\newcommand\sign{\operatorname{sign}}
\newcommand\Sh{\mbox{\Large $\mathfrak {s}$}}
   \title{Exceptional Meixner and Laguerre orthogonal polynomials
  \footnote{Partially supported by MTM2012-36732-C03-03 (Ministerio de Economía y Competitividad),
FQM-262, FQM-4643, FQM-7276 (Junta de Andalucía) and Feder Funds (European
Union).}}
   \author{Antonio J. Dur\'{a}n\\
     \footnotesize
        \  Departamento de An\'{a}lisis Matem\'{a}tico.
       Universidad de Sevilla \\
       \footnotesize Apdo (P. O. BOX) 1160. 41080 Sevilla. Spain.
   duran@us.es \\
          \ \ }
   \date{}
\begin{document}
   \maketitle

\bigskip

\begin{abstract}
Using Casorati determinants of Meixner polynomials $(m_n^{a,c})_n$, we construct for each pair $\F=(F_1,F_2)$ of finite sets of positive integers a sequence of polynomials $m_n^{a,c;\F}$, $n\in \sigma _\F$, which are eigenfunctions of a second order difference operator, where $\sigma _\F$ is certain infinite set of nonnegative integers, $\sigma _\F \varsubsetneq \NN$.  When $c$ and $\F$ satisfy a suitable admissibility condition, we prove that the polynomials $m_n^{a,c;\F}$, $n\in \sigma _\F$, are actually exceptional Meixner polynomials; that is, in addition, they are orthogonal and complete with respect to a positive measure. By passing to the limit, we transform the Casorati determinant of Meixner polynomials into a Wronskian type determinant of Laguerre polynomials $(L_n^\alpha)_n$. Under the admissibility conditions for $\F$ and $\alpha$, these Wronskian type determinants turn out to be exceptional Laguerre polynomials.
\end{abstract}

\section{Introduction}
In \cite{duch}, we have introduced a systematic way of constructing exceptional discrete orthogonal polynomials using the concept of dual families of polynomials. Using Charlier polynomials, we applied  this procedure to construct exceptional Charlier polynomials and, passing to the limit, exceptional Hermite polynomials. The purpose of this paper is to extend this construction using  Meixner and Laguerre polynomials.

Exceptional orthogonal polynomials $p_n$, $n\in X\varsubsetneq \NN$, are complete orthogonal polynomial systems with respect to a positive measure which in addition
are eigenfunctions of a second order differential operator. They extend the  classical families of Hermite, Laguerre and Jacobi. The last few years have seen a great deal of activity in the area  of exceptional orthogonal polynomials (see, for instance,
\cite{DEK}, \cite{GUKM1}, \cite{GUKM2} (where the adjective \textrm{exceptional} for this topic was introduced), \cite{GUKM3}, \cite{GUKM4}, \cite{GUGM}, \cite{G}, \cite{GQ}, \cite{MR}, \cite{OS0}, \cite{OS}, \cite{OS3}, \cite{Qu}, \cite{STZ}, \cite{Ta} and the references therein).

In the same way, exceptional discrete orthogonal polynomials are complete orthogonal polynomial systems with respect to a positive measure which in addition are eigenfunctions of a second order difference operator, extending the discrete classical families of Charlier, Meixner, Krawtchouk and Hahn, or Wilson, Racah, etc., if orthogonal discrete polynomials on nonuniform lattices are considered (\cite{duch}, \cite{OS}, \cite{YZ}). One can also add to the list the exceptional $q$-orthogonal polynomials related to second order $q$-difference operators (\cite{OS,OS2,OS4,OS5,OS6}).

The most apparent difference between classical or classical discrete orthogonal polynomials and their exceptional counterparts
is that the exceptional families have gaps in their degrees, in the
sense that not all degrees are present in the sequence of polynomials (as it happens with the classical families) although they form a complete orthonormal set of the underlying $L^2$ space defined by the orthogonalizing positive measure. This
means in particular that they are not covered by the hypotheses of Bochner's and Lancaster's classification theorems (see \cite{B} or \cite{La}) for classical and classical discrete orthogonal polynomials, respectively.
Exceptional orthogonal polynomials have been applied to shape-invariant potentials \cite{Qu},
supersymmetric transformations \cite{GUKM3}, to discrete quantum mechanics \cite{OS}, mass-dependent potentials \cite{MR}, and to quasi-exact solvability \cite{Ta}.

As mentioned above, we use the concept of dual families of polynomials to construct exceptional discrete orthogonal polynomials (see \cite{Leo}). One can then also construct examples of exceptional orthogonal polynomials by taking limits in some of the parameters in the same way as one goes from classical discrete polynomials to classical polynomials in the Askey tableau.

\begin{definition}\label{dfp}
Given two sets of nonnegative integers $U,V\subset \NN$, we say that the two sequences of polynomials
$(p_u)_{u\in U}$, $(q_v)_{v\in V}$ are dual if there exist a couple of sequences of numbers $(\xi_u)_{u\in U}, (\zeta_v)_{v\in V} $ such that
\begin{equation}\label{defdp}
\xi_up_u(v)=\zeta_vq_v(u), \quad u\in U, v\in V.
\end{equation}
\end{definition}

Duality has shown to be a fruitful concept regarding discrete orthogonal polynomials, and his utility will be again manifest in the exceptional discrete polynomials world. Indeed, as we pointed out in \cite{duch}, it turns out that duality interchanges exceptional discrete orthogonal polynomials with the so-called Krall discrete orthogonal polynomials. A Krall discrete orthogonal family is a sequence of polynomials $(p_n)_{n\in \NN}$, $p_n$ of degree $n$, orthogonal with respect to a positive measure which, in addition, are also eigenfunctions of a higher order difference operator. A huge amount of families of Krall discrete orthogonal polynomials have been recently introduced by the author by mean of certain Christoffel transform of the classical discrete measures of Charlier, Meixner, Krawtchouk and Hahn (see \cite{du0}, \cite{du1}, \cite{DdI}). A Christoffel transform is a transformation which consists in multiplying a measure $\mu$ by a polynomial $r$. It has a long tradition in the context of orthogonal polynomials: it goes back a century and a half ago when E.B. Christoffel (see \cite{Chr} and also \cite{Sz}) studied it for the particular case $r(x)=x$.

The content of this paper is as follows. In Section 2, we include some preliminary results about Christoffel transforms and finite sets of positive integers.

In Section 3, using Casorati determinants of Meixner polynomials we associated to a pair $\F=(F_1,F_2)$ of finite sets of positive integers a sequence of polynomials which are eigenfunctions of a second order difference operator.

Denote by $\F=(F_1,F_2)$ a pair of finite sets of
positive integers, and write $k_i$ for the number of elements of $F_i$,
$i=1,2$ and $k=k_1+k_2$ for the number of elements of $\F$. One of
the components of $\F$, but not both, can be the empty set.
We define the nonnegative integer $u_\F$  by $u_\F=\sum_{f\in F_1}f+\sum_{f\in
F_2}f-\binom{k_1+1}{2}-\binom{k_2}{2}$ and the infinite set of nonnegative integers $\sigma _\F$ by
$$
\sigma _\F=\{u_\F,u_\F+1,u_\F+2,\cdots \}\setminus \{u_\F+f,f\in F_1\}.
$$
Given $a,c\in \RR $ with $a\not =0,1$ and $c\not =0,-1,-2,\ldots$, we then associate to the pair $\F$ the sequence of polynomials $m_n^{a,c;\F}$, $n\in \sigma _\F$, defined by
\begin{equation}\label{defmexi}
m_n^{a,c;\F}(x)=  \left|
  \begin{array}{@{}c@{}cccc@{}c@{}}
    & m_{n-u_\F}^{a,c}(x)&m_{n-u_\F}^{a,c}(x+1)&\cdots &m_{n-u_\F}^{a,c}(x+k) & \\
    \dosfilas{ m_{f}^{a,c}(x) & m_{f}^{a,c}(x+1) &\cdots  & m_{f}^{a,c}(x+k) }{f\in F_1} \\
    \dosfilas{ m_{f}^{1/a,c}(x) & m_{f}^{1/a,c}(x+1)/a & \cdots & m_{f}^{1/a,c}(x+k)/a^k }{f\in F_2}
  \end{array}
  \right|
\end{equation}
where $(m_n^{a,c})_n$ are the Meixner polynomials (see (\ref{Mxpol})). Along this paper, we use the following notation:
given a finite set of positive integers $F=\{f_1,\ldots , f_m\}$, the expression
\begin{equation}\label{defdosf}
  \begin{array}{@{}c@{}cccc@{}c@{}}
    \dosfilas{ z_{f,1} & z_{f,2} &\cdots  & z_{f,m} }{f\in F}
  \end{array}
\end{equation}
inside of a matrix or a determinant will mean the submatrix defined by
$$
\left(
\begin{array}{cccc}
z_{f_1,1} & z_{f_1,2} &\cdots  & z_{f_1,m}\\
\vdots &\vdots &\ddots &\vdots \\
z_{f_m,1} & z_{f_m,2} &\cdots  & z_{f_m,m}
\end{array}
\right) .
$$
The determinant (\ref{defmexi}) should be understood in this form.

When $-1<a<1$ Meixner polynomials $(m_n^{a,c})_n$ are orthogonal with respect to the discrete measure
$$
\rho_{a,c} =\sum_{x=0}^\infty \frac{a^x\Gamma(x+c)}{x!}\delta _x.
$$
Consider now the measure
\begin{equation}\label{ctmew}
\rho _{a,c}^{\F}=\prod_{f\in F_1}(x-f)\prod_{f\in F_2}(x+c+f)\rho _{a,c}.
\end{equation}
It turns out that the sequence of polynomials $m_n^{a,c;\F}$, $n\in \sigma _\F$, and the sequence of orthogonal polynomials with respect to the measure $\rho _{a,c}^{\F}$ are dual sequences (see Lemma \ref{lem3.2}). As a consequence we get that the polynomials $m_n^{a,c;\F}$, $n\in \sigma _\F$, are always eigenfunctions of a second order difference operator $D_\F$ (whose coefficients are rational functions); see Theorem \ref{th3.3}.

The most interesting case appears when the measure $\rho _{a,c}^{\F}$ is positive. This gives rise to the concept of admissibility for the real number $c$ and the pair $\F$, which we study in Section 2.4.

\begin{definition} Let $\F=(F_1,F_2)$ be a pair of finite sets of positive integers. For a real number $c\not=0,-1,-2,\cdots$, write
$\hat c=\max \{-[c],0\}$, where $[c]$ denotes the integral part of $c$. We say that $c$ and $\F$ are admissible if for all $x\in \NN $
\begin{equation}\label{defadmi}
\frac{\prod_{f\in F_1}(x-f)\prod_{f\in F_2}(x+c+f)}{(x+c)_{\hat c}}\ge 0.
\end{equation}
\end{definition}
Let us remind that for Charlier and Hermite polynomials, the admissibility of a finite set $F$ of positive integers is defined by $\prod_{f\in F}(x-f)\ge 0$, for $x\in \NN$. The concept of admissibility defined in (\ref{defadmi}) is more involve than the corresponding one for exceptional Charlier and Hermite polynomials because of two reasons. On the one hand, we have now a pair $\F$ of finite sets instead of a single finite set $F$. On the other hand, the admissibility also depends on the parameter $c$ of the Meixner polynomials (or on the parameter $\alpha$ of the Laguerre polynomials) while Charlier and Hermite admissibility only depends on the finite set $F$. The concept of admissibility for exceptional Charlier and Hermite polynomials has appeared several times in the literature (see, for instance, \cite{Kr}, \cite{Ad} or \cite{YZ}); however, we have not found in the literature a definition as (\ref{defadmi}) for Meixner and Laguerre admissibility.

In Section 4, we  prove (Theorems \ref{th4.4} and \ref{th4.5}) that if $c$ and $\F$ are admissible, then the polynomials $m_n^{a,c;\F}$, $n\in \sigma _\F$, are orthogonal and complete with respect to the positive measure
$$
\omega_{a,c}^{\F} =\sum_{x=0}^\infty \frac{a^x\Gamma(x+c+k)}{x!\Omega ^{a,c}_\F(x)\Omega ^{a,c}_\F(x+1)}\delta _x,
$$
where $\Omega _\F^{a,c}$ is the polynomial defined by
\begin{equation}\label{defmexii}
\Omega _\F^{a,c}(x)=  \left|
  \begin{array}{@{}c@{}cccc@{}c@{}}
    \dosfilas{ m_{f}^{a,c}(x) & m_{f}^{a,c}(x+1) &\cdots  & m_{f}^{a,c}(x+k-1) }{f\in F_1} \\
    \dosfilas{ m_{f}^{1/a,c}(x) & m_{f}^{1/a,c}(x+1)/a & \cdots & m_{f}^{1/a,c}(x+k-1)/a^{k-1} }{f\in F_2}
  \end{array}
  \right|.
\end{equation}
In particular we characterize the admissibility of $c$ and $\F$ in terms of the positivity of $\Gamma(x+c+k)\Omega ^{a,c}_\F(x)\Omega ^{a,c}_\F(x+1)$  for $x\in \NN$ (Lemma \ref{l3.1}).

In Section 5 and 6, we construct exceptional Laguerre polynomials by taking limit (in a suitable way) in the exceptional Meixner polynomials when $a\to 1 $. We then get (see Theorem \ref{th5.1}) that for each pair $\F=(F_1,F_2)$ of finite sets of positive integers, the polynomials
\begin{equation}\label{deflaxi}
L_n^{\alpha ;\F}(x)= \left|
  \begin{array}{@{}c@{}cccc@{}c@{}}
    & L_{n-u_\F}^{\alpha}(x)&(L_{n-u_\F}^{\alpha})'(x)&\cdots &(L_{n-u_\F}^{\alpha})^{(k)}(x) & \\
    \dosfilas{ L_{f}^{\alpha}(x) & (L_{f}^{\alpha})'(x) &\cdots  & (L_{f}^{\alpha})^{(k)}(x) }{f\in F_1} \\
    \dosfilas{ L_{f}^{\alpha}(-x) & L_{f}^{\alpha+1}(-x) & \cdots & L_{f}^{\alpha +k}(-x) }{f\in F_2}
  \end{array}
  \right|,
\end{equation}
$n\in \sigma _\F$, are eigenfunctions of a second order differential operator.

When $\alpha +1$ and $\F$ are admissible, we prove that $\alpha +k>-1$ and that the determinant $\Omega _F^\alpha $  defined by
\begin{equation}\label{deflaii}
\Omega _{\F}^{\alpha}(x)=\left|
  \begin{array}{@{}c@{}cccc@{}c@{}}
    \dosfilas{ L_{f}^{\alpha}(x) & (L_{f}^{\alpha})'(x) &\cdots  & (L_{f}^{\alpha})^{(k-1)}(x) }{f\in F_1} \\
    \dosfilas{ L_{f}^{\alpha}(-x) & L_{f}^{\alpha+1}(-x) & \cdots & L_{f}^{\alpha +k-1}(-x) }{f\in F_2}
  \end{array}
  \right| ,
\end{equation}
does not vanish in $[0,+\infty)$. We conjecture that the converse is also true.
We also prove that the polynomials $L_n^{\alpha;\F}$, $n\in \sigma _\F$, are orthogonal with respect to the positive weight
$$
\omega_{\alpha;F} =\frac{x^{\alpha +k}e^{-x}}{(\Omega_F^{\alpha}(x))^2},\quad x>0 .
$$
Moreover, they form a complete orthogonal system in $L^2(\omega _{\alpha;\F})$ (see Theorem \ref{th6.3}).

When $c$ (or $\alpha +1$) and $\F$ are admissible, exceptional Meixner and Laguerre polynomials $m_n^{a,c;\F}$ and $L_n^{\alpha;\F}$, $n\in \sigma _\F$, can be constructed in an alternative way.
Indeed, consider the involution $I$ in the set of all finite sets of positive integers defined by
$$
I(F)=\{1,2,\cdots, f_k\}\setminus \{f_k-f,f\in F\}.
$$
The set $I(F)$ will be denoted by $G$: $G=I(F)$. We also write $G=\{g_1,\cdots , g_m\}$ with $g_i<g_{i+1}$ so that $m$ is the number of elements of $G$ and $g_m$ the maximum element of $G$. We also need the nonnegative integer $v_\F$ defined by
$$
v_\F=u_\F+M_{F_1}+1,
$$
where $M_{F_1}$ is the maximum element of $F_1$.
For the exceptional Meixner polynomials, we then have ($n\ge v_\F$)
\begin{equation}\label{qusmei2i}
m_n^{a,c;\F}(x)=\beta_n
\left|
  \begin{array}{@{}c@{}cccc@{}c@{}}
    &r_0^{\tilde c}(x)m_{n-v_\F}^{a,\tilde c}(x)&r_1^{\tilde c}(x)m_{n-v_\F}^{a,\tilde c}(x-1)&\cdots &r_m^{\tilde c}(x)m_{n-v_\F}^{a,\tilde c}(x-m) & \\
    \dosfilas{ m_{g}^{a,2-\tilde c}(-x-1) & am_{g}^{a,2-\tilde c}(-x) &\cdots  & a^mm_{g}^{a,2-\tilde c}(-x+m-1) }{g\in G_1} \\
    \dosfilas{ m_{g}^{1/a,2-\tilde c}(-x-1) & m_{g}^{1/a,2-\tilde c}(-x) & \cdots & m_{g}^{1/a,2-\tilde c}(-x+m-1)}{g\in G_2}
  \end{array}
  \right|,
\end{equation}
where $\tilde c=c+M_{F_1}+M_{F_2}+2$, $r_j^c(x)=(c+x-m)_{m-j}(x-j+1)_{j}$, $j=0,\ldots ,m$,  and $\beta_n$ is certain normalization constant.

For the exceptional Laguerre polynomials we have ($n\ge v_\F$)
\begin{equation}\label{deflaxai}
L_n^{\alpha ;\F}(x)=\gamma_n
\left|
  \begin{array}{@{}c@{}cccc@{}c@{}}
    &x^mL_{n-v_\F}^{\tilde \alpha}(x)&w_{n-v_\F}^{\tilde \alpha,1} x^{m-1}L_{n-v_\F}^{\tilde \alpha-1}(x)&\cdots &w_{n-v_\F}^{\tilde \alpha,m} L_{n-v_\F}^{\tilde \alpha -m}(x) & \\
    \dosfilas{ L_{g}^{-\tilde \alpha}(-x) & L_{g}^{-\tilde \alpha +1}(-x) &\cdots  & L_{g}^{-\tilde \alpha +m}(-x) }{g\in G_1} \\
    \dosfilas{ L_{g}^{-\tilde \alpha }(x) & (L_{g}^{-\tilde \alpha})'(x) & \cdots & (L_{g}^{-\tilde \alpha})^{(m)}(x)}{g\in G_2}
  \end{array}
  \right|,
\end{equation}
where $\tilde \alpha =\alpha +M_{F_1}+M_{F_2}+2$,  $w^{\alpha ,j} _n=j!\binom{n+\alpha }{j}$, and  $\gamma _n$F  is certain normalization constant.

We have however computational evidence that shows that both identities (\ref{qusmei2i}) and (\ref{deflaxai}) are true for every pair $\F$ of finite sets of positive integers. Using a physics approach, similar formulas to (\ref{deflaxai}) have been introduced by Grandati, Quesne-Grandati and Odake-Sasaki (\cite{G}, \cite{GQ}, \cite{OS3}).

Both determinantal definitions (\ref{defmexi}) and (\ref{qusmei2i}) of the polynomials $m_n^{a,c;\F}$, $n\in \sigma _\F$, automatically imply a couple of factorizations of the second order difference operator $D_\F$ in two first order difference operators. Using these factorizations, we prove that the sequence $m_n^{a,c;\F}$, $n\in \sigma _\F$, and the operator $D_\F$ can be constructed in two different ways using Darboux transforms (see Definition \ref{dxt}). The same happens with de determinantal definitions of the exceptional Laguerre polynomials $L_n^{\alpha, \F}$ (\ref{deflaxi}) and (\ref{deflaxai}). This fact agrees with the G\'omez-Ullate-Kamran-Milson conjecture and its corresponding discrete version (see \cite{GUKM5}): exceptional and exceptional discrete orthogonal polynomials can be obtained by applying a sequence of Darboux transforms to a classical or classical discrete orthogonal family, respectively.

We would like to include in this introduction a conjecture. There seems to be a very nice invariant property of the polynomial $\Omega _\F ^{a,c}$ (\ref{defmexii}) underlying the fact that the polynomials $m_n^{a,c;\F}$, $n\in \sigma _\F$, admit both determinantal definitions (\ref{defmexi}) and (\ref{qusmei2i}):
except for a constant (depending on $a$ but neither on
$x$ nor on $c$), $\Omega_\F^{a,c}(x)$ remains invariant if we
change $\F=(F_1,F_2)$ to $\G=(I(F_1),I(F_2))$, $x$ to $-x$ and
$c$ to $-c-M_{F_1}-M_{F_2}$. More precisely
\begin{equation}\label{iza}
\Omega_\F^{a,c}(x)=(-1)^{u_\F+k_1}\frac{u_a(\F)}{u_a(\G)}\Omega_\G^{a,-c-M_{F_1}-M_{F_2}}
(-x),
\end{equation}
where $u_a(\F)=a^{\binom{k_2}{2}-k_2(k-1)}(1-a)^{k_1k_2}$.

For the cases when $F_1$ is formed by consecutive integers and
$F_2=\emptyset$, or $F_1=\emptyset$ and $F_2$ is formed by
consecutive integers, the conjecture appeared by the first time in
\cite{du2} and it was proved in \cite{du3}.

Passing to the limit, the invariant property (\ref{iza}) gives
\begin{equation}\label{izah}
\Omega _\F^\alpha (x)=\epsilon \Omega _\G^{-\alpha
-M_{F_1}-M_{F_2}-2}(-x),
\end{equation}
where $\epsilon $ is the sign $\epsilon=(-1)^{u_\F+k_1+\sum_{f\in
F_1}f+\sum_{g\in G_1}g}$.

\bigskip
Krawtchouk exceptional polynomials can be formally derived from the Meixner case taking into account that $k_n^{a,N}(x)=m_n^{-a,-N+1}(x)$. That is, by setting $a\to -a$, $c\to -N+1$ in the formulas for the polynomials, and changing
$$
x\in \NN,\quad \frac{a^x\Gamma(x+c)}{x!}\quad \quad \mbox{to\quad  \quad $x=0,\cdots ,N-1,\quad \displaystyle \frac{a^x}{\Gamma(N-x)x!}$}
$$
in the orthogonalizing measure.

\bigskip

We finish pointing out that, as explained above, the approach of this paper is the same as in \cite{duch} for Charlier and Hermite polynomials. Since we work here with a pair of finite sets of positive integers instead of only one set, and more parameters (two for Meixner and one for Laguerre instead of one for Charlier and zero for Hermite), the computations are technically more involve. Anyway, we will omit those proofs which are too similar to the corresponding ones in \cite{duch}.

\section{Preliminaries}
Let $\mu $ be a Borel measure (positive or not) on the real line. The $n$-th moment of $\mu $ is defined by
$\int _\RR t^nd\mu (t)$. When $\mu$ has finite moments for any $n\in \NN$, we can associate it a bilinear form defined in the linear space of polynomials by
\begin{equation}\label{bf}
\langle p, q\rangle =\int pqd\mu.
\end{equation}
Given an infinite set $X$ of nonnegative integers, we say that the polynomials $p_n$, $n\in X$, are orthogonal with respect to $\mu$ if they
are orthogonal with respect to the bilinear form defined by $\mu$; that is, if they satisfy
$$
\int p_np_md\mu =0, \quad n\not = m, \quad n,m \in X.
$$
When $X=\NN$ and the degree of $p_n$ is $n$, $n\ge 0$, we get the usual definition of orthogonal polynomials with respect to a measure.
When $X=\NN$, orthogonal polynomials with respect to a measure are unique up to multiplication by non null constant. Let us remark  that this property is not true when $X\not =\NN$.
Positive measures $\mu $ with finite moments of any order and infinitely many points in its support has always a sequence of orthogonal polynomials $(p_n)_{n\in\NN }$, $p_n$ of degree $n$ (it is enough to apply the Gram-Smith orthogonalizing process to $1, x, x^2, \ldots$); in this case
the orthogonal polynomials have positive norm: $\langle p_n,p_n\rangle>0$. Moreover, given a sequence of orthogonal polynomials $(p_n)_{n\in \NN}$ with respect to a measure $\mu$ (positive or not) the bilinear form (\ref{bf}) can be represented by a positive measure if and only if $\langle p_n,p_n \rangle > 0$, $n\ge 0$.

When $X=\NN$, Favard's Theorem establishes that a sequence $(p_n)_{n\in \NN}$ of polynomials, $p_n$ of degree $n$, is orthogonal (with non null norm) with respect to a measure if and only if it satisfies
a three term recurrence relation of the form ($p_{-1}=0$)
$$
xp_n(x)=a_np_{n+1}(x)+b_np_n(x)+c_np_{n-1}(x), \quad n\ge 0,
$$
where $(a_n)_{n\in \NN}$, $(b_n)_{n\in \NN}$ and $(c_n)_{n\in \NN}$ are sequences of real numbers with $a_{n-1}c_n\not =0$, $n\ge 1$. If, in addition, $a_{n-1}c_n>0$, $n\ge 1$,
then the polynomials $(p_n)_{n\in \NN}$ are orthogonal with respect to a positive measure with infinitely many points in its support, and conversely.
Again, Favard's Theorem is not true for a sequence of orthogonal polynomials $(p_n)_{n\in X}$ when $X\not =\NN$.

Darboux transformations are an important tool for constructing exceptional orthogonal polynomials. We define them next for second order difference and differential operators.

\begin{definition}\label{dxt}
Given a system $(T,(\phi_n)_n)$ formed by a second order difference or differential operator $T$ and a sequence $(\phi_n)_n$ of eigenfunctions for $T$, $T(\phi_n)=\pi_n\phi_n$, by a Darboux transform of the system $(T,(\phi_n)_n)$ we
mean the following. For a real number $\lambda$, we factorize $T-\lambda Id$ as the product of two first order difference or differential operators $T=BA+\lambda Id$ ($Id$ denotes the identity operator). We then produce a new system consisting in the operator $\hat T$, obtained by reversing the order of the factors,
$\hat T = AB+\lambda Id$, and the sequence of eigenfunctions $\hat \phi_n =A(\phi_n)$: $\hat T(\hat \phi_n)=\pi_n\hat\phi_n$.
We say that the system $(\hat T,(\hat\phi_n)_n)$ has been obtained by applying a Darboux transformation with parameter $\lambda$ to
 the system $(T,(\phi_n)_n)$.
\end{definition}

We will also need the following straightforward lemma.

\bigskip
\begin{lemma}\label{rmc}
Let $M$ be a $(s+1)\times m$ matrix with $m\ge s+1$. Write $c_i$, $i=1,\ldots , m$, for the columns of $M$ (from left to right). Assume that for $0\le j\le m-s-1$ the consecutive columns $c_{j+i}$, $i=1,\cdots ,s$, of $M$ are linearly independent while the consecutive columns $c_{j+i}$, $i=1,\cdots ,s+1$, are linearly dependent. Then $\rank M=s$.
\end{lemma}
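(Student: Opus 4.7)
\medskip

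\noindent\textbf{Proof plan for Lemma \ref{rmc}.} The plan is to show the two inequalities $\rank M\ge s$ and $\rank M\le s$ separately. The lower bound is immediate from the hypothesis: taking $j=0$, the columns $c_1,\ldots ,c_s$ are linearly independent, so $\rank M\ge s$.

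For the upper bound, I would prove by induction on $j\in\{0,1,\ldots ,m-s-1\}$ the statement
$$
c_{s+1+j}\in \mathrm{span}(c_1,\ldots ,c_s).
$$
The base case $j=0$ uses the hypothesis directly: the columns $c_1,\ldots ,c_{s+1}$ are linearly dependent, so there exist scalars $\alpha_1,\ldots ,\alpha_{s+1}$ not all zero with $\sum_{i=1}^{s+1}\alpha_ic_i=0$. If $\alpha_{s+1}=0$ we would get a nontrivial linear relation among $c_1,\ldots ,c_s$, contradicting their linear independence; hence $\alpha_{s+1}\ne 0$ and $c_{s+1}$ is a linear combination of $c_1,\ldots ,c_s$.

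For the inductive step, assume $c_{s+1},\ldots ,c_{s+j}$ all lie in $\mathrm{span}(c_1,\ldots ,c_s)$. Applying the hypothesis at position $j$, the columns $c_{j+1},\ldots ,c_{j+s}$ are linearly independent while $c_{j+1},\ldots ,c_{j+s+1}$ are linearly dependent; exactly the same argument as in the base case shows that $c_{j+s+1}$ belongs to $\mathrm{span}(c_{j+1},\ldots ,c_{j+s})$. But each of $c_{j+1},\ldots ,c_s$ is trivially in $\mathrm{span}(c_1,\ldots ,c_s)$, and each of $c_{s+1},\ldots ,c_{j+s}$ is there by the inductive hypothesis, so $\mathrm{span}(c_{j+1},\ldots ,c_{j+s})\subseteq \mathrm{span}(c_1,\ldots ,c_s)$. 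Hence $c_{j+s+1}\in\mathrm{span}(c_1,\ldots ,c_s)$, closing the induction.

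Combining the base case and the inductive step, every column of $M$ lies in $\mathrm{span}(c_1,\ldots ,c_s)$, so $\rank M\le s$, and together with the lower bound we conclude $\rank M=s$. There is no real obstacle here; the only point requiring a little care is keeping track of the propagation of the span, i.e.\ verifying that the span of any $s$ consecutive columns is contained in $\mathrm{span}(c_1,\ldots ,c_s)$ once the previous cases of the induction have been established.
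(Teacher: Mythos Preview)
Your argument is correct and complete; the paper itself omits the proof entirely, calling the lemma ``straightforward,'' so your induction is exactly the kind of routine verification the author had in mind.
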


\bigskip

Given a finite set of numbers $F=\{f_1,\cdots, f_k\}$ we denote by $V_F$ the Vandermonde determinant defined by
\begin{align}\label{defvdm}
V_F=\prod_{1=i<j=k}(f_j-f_i).
\end{align}

\subsection{Christoffel transform}\label{secChr}
Let $\mu$ be a measure (positive or not) and assume that $\mu$ has a sequence of orthogonal polynomials
$(p_n)_{n\in \NN}$, $p_n$ with degree $n$ and $\langle p_n,p_n\rangle \not =0$ (as we mentioned above, that always happens if $\mu$ is positive, with finite moments and infinitely many points in its support).

Given a finite set $F$ of real numbers, $F=\{f_1,\cdots , f_k\}$, $f_i<f_{i+1}$, we write $\Phi_n$, $n\ge 0$, for the $k\times k$ determinant
\begin{equation}\label{defph}
\Phi_n=\vert p_{n+j-1}(f_i)\vert _{i,j=1,\cdots , k}.
\end{equation}
Notice that $\Phi_n$, $n\ge 0$, depends on both, the finite set $F$ and the measure $\mu$. In order to stress this dependence, we sometimes write in this Section $\Phi_n^{\mu, F}$ for $\Phi_n$.

Along this Section we assume that the set $\X_\mu^F=\{ n\in \NN :\Phi_n^{\mu,F}=0\}$ is finite. We denote $\x_\mu ^F=\max \X_\mu ^F$. If
$\X_\mu ^F=\emptyset$ we take $\x_\mu ^F=-1$.

The Christoffel transform of $\mu$ associated to the annihilator polynomial $\pp$ of $F$,
$$
\pp (x)=(x-f_1)\cdots (x-f_k),
$$
is the measure defined by $ \mu_F =\pp \mu$.

Orthogonal polynomials with respect to $\mu_F$ can be constructed by means of the formula
\begin{equation}\label{mata00}
q_n(x)=\frac{1}{\pp (x)}\det \begin{pmatrix}p_n(x)&p_{n+1}(x)&\cdots &p_{n+k}(x)\\
p_n(f_1)&p_{n+1}(f_1)&\cdots &p_{n+k}(f_1)\\
\vdots&\vdots&\ddots &\vdots\\
p_n(f_k)&p_{n+1}(f_k)&\cdots &p_{n+k}(f_k) \end{pmatrix}.
\end{equation}
Notice that the degree of $q_n$ is equal to $n$ if and only if $n\not\in \X_\mu ^F$. In that case the leading coefficient $\lambda^Q_n$ of $q_n$ is
equal to $(-1)^k\lambda^P_{n+k}\Phi_n$, where $\lambda ^P_n$ denotes the leading coefficient of $p_n$.

The next Lemma follows easily using \cite{Sz}, Th. 2.5.

\begin{lemma}\label{sze}
The measure $\mu_F$ has a sequence $(q_n)_{n=0}^\infty $, $q_n$ of degree $n$, of orthogonal polynomials if and only if $\X_\mu ^F=\emptyset$.
In that case, an orthogonal polynomial of degree $n$ with respect to $\mu _F$ is given by (\ref{mata00}) and also $\langle q_n,q_n\rangle _{\mu _F}\not =0$, $n\ge 0$. If $\X_\mu \not =\emptyset$, the polynomial $q_n$ (\ref{mata00}) has still degree $n$ for $n\not \in \X_\mu^F$, and satisfies $\langle q_n,r\rangle_{\mu _F}=0$ for all polynomial $r$ with degree less than $n$ and $\langle q_n,q_n\rangle _{\mu _F}\not =0$.
\end{lemma}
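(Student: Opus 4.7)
The plan is to extract everything from the row expansion of the determinant in (\ref{mata00}). I would first check that $q_n$ is a polynomial of degree at most $n$: the numerator determinant vanishes whenever $x=f_i$ (because rows $1$ and $i+1$ then coincide), hence is divisible by $\mathfrak{p}_F(x)$ as a polynomial in $x$. Expanding the determinant along the first row gives
$$
\mathfrak{p}_F(x)\,q_n(x)=\sum_{j=0}^{k}(-1)^{j}\Psi_{n,j}\,p_{n+j}(x),
$$
where $\Psi_{n,j}$ is the $k\times k$ minor obtained by deleting the $(j+1)$-st column of the lower $k\times(k+1)$ block; in particular $\Psi_{n,0}=\Phi_{n+1}$ and $\Psi_{n,k}=\Phi_n$. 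Comparing coefficients of $x^{n+k}$ yields the leading coefficient $\lambda^Q_n=(-1)^k\lambda^P_{n+k}\Phi_n$, so $\deg q_n=n$ exactly when $n\notin\X_\mu^F$.

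Orthogonality follows immediately from the same expansion: if $\deg r<n$ then
$$
\langle q_n,r\rangle_{\mu_F}=\int \mathfrak{p}_F(x)\,q_n(x)\,r(x)\,d\mu(x)=\sum_{j=0}^{k}(-1)^{j}\Psi_{n,j}\int p_{n+j}\,r\,d\mu=0,
$$
because each $p_{n+j}$ is $\mu$-orthogonal to $r$. To compute $\langle q_n,q_n\rangle_{\mu_F}$, write $q_n=\lambda^Q_n x^n+(\text{lower degree})$; the lower-degree part is killed by the orthogonality just established, so $\langle q_n,q_n\rangle_{\mu_F}=\lambda^Q_n\int x^n\,\mathfrak{p}_F q_n\,d\mu$. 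Substituting the row expansion and using $\int p_{n+j}\,x^n\,d\mu=0$ for $j\ge 1$, only the $j=0$ term contributes, giving
$$
\langle q_n,q_n\rangle_{\mu_F}=\frac{(-1)^k\lambda^P_{n+k}}{\lambda^P_n}\Phi_n\Phi_{n+1}\langle p_n,p_n\rangle_\mu.
$$
When $\X_\mu^F=\emptyset$ every factor is nonzero, so $(q_n)_{n\in\NN}$ is an OP sequence for $\mu_F$ with $\deg q_n=n$ and nonzero norms, which proves the ``if'' direction; the same identities yield the degree and orthogonality assertions for $n\notin\X_\mu^F$ when $\X_\mu^F\ne\emptyset$.

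For the converse I would argue by contradiction. Assume $\mu_F$ admits an OP sequence $(Q_n)_{n\ge 0}$ with $\deg Q_n=n$ and $\langle Q_n,Q_n\rangle_{\mu_F}\ne 0$, and suppose $\Phi_n=0$ for some $n$. Then the columns of $(p_{n+j-1}(f_i))_{i,j=1,\ldots,k}$ are linearly dependent, so there are scalars $\alpha_1,\ldots,\alpha_k$, not all zero, with $P(x):=\sum_j \alpha_j p_{n+j-1}(x)$ vanishing at every $f_i$. Thus $P=\mathfrak{p}_F R$ with $\deg R\le n-1$, and $R\ne 0$ by linear independence of $p_n,\ldots,p_{n+k-1}$. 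For every $s$ with $\deg s<n$,
$$
\int R\,s\,d\mu_F=\int P\,s\,d\mu=\sum_j\alpha_j\int p_{n+j-1}\,s\,d\mu=0,
$$
so expanding $R=\sum_{i=0}^{n-1}\beta_i Q_i$ and testing against each $Q_i$ forces $\beta_i=0$ by the nondegeneracy of the norms, contradicting $R\ne 0$. The main subtlety lies in this converse step; everything else reduces to direct bookkeeping in the row expansion of (\ref{mata00}).
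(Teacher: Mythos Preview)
Your argument is correct and self-contained. The paper itself does not give a proof at all: it simply says the lemma ``follows easily using \cite{Sz}, Th.~2.5,'' i.e.\ the classical Christoffel formula. What you have written is precisely the standard proof of that result---expand the determinant (\ref{mata00}) along its first row, read off the leading coefficient and the orthogonality, and compute the norm by pairing against $x^n$. Your converse (constructing a nonzero $R$ of degree $\le n-1$ orthogonal to all lower degrees from a dependence among the columns when $\Phi_n=0$) is also the standard one and is carried out cleanly.

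Two remarks. First, your norm computation is exactly identity (\ref{n2q}), which the paper records separately (crediting \cite{duch}); so you have in fact proved a little more than the lemma alone. Second, note that your formula shows $\langle q_n,q_n\rangle_{\mu_F}\ne 0$ if and only if \emph{both} $\Phi_n$ and $\Phi_{n+1}$ are nonzero; the lemma's phrasing ``$\langle q_n,q_n\rangle_{\mu_F}\ne 0$ for $n\notin\X_\mu^F$'' is therefore a bit loose unless $n+1\notin\X_\mu^F$ as well. You were implicitly careful about this (in the $\X_\mu^F\ne\emptyset$ case you only asserted degree and orthogonality), and indeed the paper only uses the nonvanishing of the norm either when $\X_\mu^F=\emptyset$ or for $n>\theta_\mu^F+1$, where the issue does not arise.
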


From (\ref{mata00}), one can also deduce (see Lemma 2.8 of \cite{duch})
\begin{equation}\label{n2q}
\langle q_n,q_n\rangle _{\mu_F}=(-1)^k\frac{\lambda^P_{n+k}}{\lambda^P_{n}}\Phi_n\Phi_{n+1}\langle p_n,p_n\rangle _{\mu},\quad n> \x_\mu^F+1.
\end{equation}
This identity holds for $n\ge 0$ when $\X_\mu =\emptyset$

\subsection{Finite sets and pair of finite sets of positive integers.}\label{sfspi}
For a finite set $F$ of positive integers, we denote $M_F=\max F$,
$m_F=\min F$; if $F=\emptyset$, we define $M_F=m_F=-1$.

Consider the set $\Upsilon$  formed by all finite sets of positive
integers:
\begin{align*}
\Upsilon=\{F:\mbox{$F$ is a finite set of positive integers}\} .
\end{align*}
We consider the involution $I$ in $\Upsilon$ defined by
\begin{align}\label{dinv}
I(F)=\{1,2,\cdots, M_F\}\setminus \{M_F-f,f\in F\}.
\end{align}
The definition of $I$ implies that $I^2=Id$.

The set $I(F)$ will be denoted by $G$: $G=I(F)$. Notice that
$$
M_F=M_G,\quad m=M_F-k+1,
$$
where $k$ and $m$ are the number of elements of $F$ and $G$,
respectively.

For a finite set $F=\{f_1,\cdots ,f_k\}$, $f_i<f_{i+1}$, of
positive integers, we define the number $s_F$ by
\begin{equation}\label{defs0}
s_F=\begin{cases} 1,& \mbox{if $F=\emptyset$},\\
k+1,&\mbox{if $F=\{1,2,\cdots , k\}$},\\
\min \{s\ge 1:s<f_s\}, & \mbox{if $F\not =\{1,2,\cdots k\}$},
\end{cases}
\end{equation}
and the set $F_{\Downarrow}$ of positive integers by
\begin{equation}\label{defffd}
F_{\Downarrow}=\begin{cases} \emptyset,& \mbox{if $F=\emptyset$ or $F=\{1,2,\cdots , k\}$,}\\
\{f_{s_F}-s_F,\cdots , f_k-s_F\},& \mbox{if $F\not =\{1,2,\cdots ,
k\}$}.
\end{cases}
\end{equation}

From now on, $\F=(F_1,F_2)$ will denote a pair of finite sets of
positive integers. We will write $F_1=\{ f_1^1,\cdots ,
f_{k_1}^1\}$, $F_2=\{ f_1^2,\cdots , f_{k_2}^2\}$, with
$f_i^j<f_{i+1}^j$. Hence $k_j$ is the number of elements of $F_j$,
$j=1,2$, and $k=k_1+k_2$ is the number of elements of $\F$. One of
the components of $\F$, but not both, can be the empty set.

We associate to $\F$ the nonnegative integers $u_\F$ and $v_\F$ and the infinite set of nonnegative integers $\sigma_\F$ defined by
\begin{align}\label{defuf}
u_\F&=\sum_{f\in F_1}f+\sum_{f\in
F_2}f-\binom{k_1+1}{2}-\binom{k_2}{2},\\\label{defvf}
v_\F&=\sum_{f\in F_1}f+\sum_{f\in
F_2}f-\binom{k_2}{2}+M_{F_1}-\frac{(k_1-1)(k_1+2)}{2},\\\label{defsf}
\sigma _\F&=\{u_\F,u_\F+1,u_\F+2,\cdots \}\setminus \{u_\F+f,f\in
F_1\}.
\end{align}
The infinite set $\sigma_\F$ will be the set of indices for the exceptional Meixner or Laguerre polynomials associated to $\F$.

Notice that $v_\F=u_\F+M_{F_1}+1$; hence
$\{v_\F,v_\F+1,v_\F+2,\cdots \}\subset \sigma_\F$.

For a pair $\F=(F_1,F_2)$ of positive integers we denote by
$\F_{j,\{ i\}}$, $i=1,\ldots , k_j$, $j=1,2$, and
$\F_{\Downarrow}$ the pair of finite sets of positive integers
defined by
\begin{align}\label{deff1}
\F_{1,\{ i\} }&=(F_1\setminus \{f_i^1\},F_2),\\\label{deff2}
\F_{2,\{ i\} }&=(F_1,F_2\setminus \{f_i^2\}),\\\label{deffd}
\F_{\Downarrow}&=((F_1)_{\Downarrow},F_2),
\end{align}
where $(F_1)_{\Downarrow}$ is defined by (\ref{defffd}). We also define
\begin{equation}\label{defs0f}
s_\F=s_{F_1}
\end{equation}
where the number $s_{F_1}$ is defined by (\ref{defs0}).

\subsection{Admissibility}\label{sectadm}
Using the determinants (\ref{defmexi}) and (\ref{deflaxi}), whose entries are Meixner $m_n^{a,c}$ or Laguerre polynomials $L_n^\alpha$, respectively, we will associate to each pair $\F$ of finite sets of positive integers a sequence of polynomials which are  eigenfunctions of a second order difference or differential operator, respectively.
The more important of these examples are those which, in addition, are orthogonal and complete with respect to a positive measure. The key concept for the existence of such positive measure is that of admissibility.

Let us remind that for Charlier and Hermite polynomials, the admissibility of a finite set $F$ of positive integers is defined as follows.

\begin{definition}\label{dadmch}
Let $F$ be a finite set of positive integers. Split up the set $F$, $F=\bigcup _{i=1}^KY_i$, in such a way that $Y_i\cap Y_j=\emptyset $, $i\not =j$, the elements of each $Y_i$ are consecutive integers and $1+\max (Y_i)<\min Y_{i+1}$, $i=1,\cdots, K-1$. We then say that $F$ is admissible if each $Y_i$, $i=1,\cdots, K$,  has an even number of elements.
\end{definition}

It is easy to see that this is equivalent to
$$
\prod_{f\in F}(x-f)\ge 0, \quad x\in \NN.
$$
This implies that the measure $\rho_a^F=\sum_{x=0}^\infty \prod_{f\in F}(x-f)a^x/x!\delta_x$ is positive. As shown in \cite{duch}, Charlier exceptional polynomials are dual of the orthogonal polynomials with respect to this measure.

Given a pair $\F$ of finite sets of positive integers, consider the measure $\rho_{a,c}^\F$ defined by (\ref{ctmew}). We show in the next Section that Meixner exceptional polynomials are dual of the orthogonal polynomials with respect to this measure. Hence, the admissibility condition in the Meixner case should be equivalent to the positivity of the measure $\rho_{a,c}^\F$.

\begin{definition}\label{dadm} Let $\F=(F_1,F_2)$ be  a pair of finite sets of positive integers. For a real number $c\not=0,-1,-2,\cdots$, write
$\hat c=\max \{-[c],0\}$, where $[c]$ denotes the integral part of $c$. We say that $c$ and $\F$ are admissible if for all $x\in \NN $
\begin{equation}\label{defadm}
\frac{\prod_{f\in F_1}(x-f)\prod_{f\in F_2}(x+c+f)}{(x+c)_{\hat c}}\ge 0.
\end{equation}
\end{definition}

As we wrote in the introduction, this admissibility concept is more involve than the corresponding for exceptional Charlier and Hermite polynomials. Indeed, on the one hand, we have now a pair $\F$ of finite sets instead of a single finite set $F$. On the other hand, the admissibility also depends on the parameter $c$ of the Meixner polynomials (or on the parameter $\alpha$ of the Laguerre polynomials).

In the following Lemma we include some important consequences derived from the admissibility.

\begin{lemma}\label{ladm} Given  a real number $c\not =0,-1,-2,\ldots$, and a pair $\F $ of finite sets of positive integers, we have
\begin{enumerate}
\item if $c$ and $\F$ are admissible then $c+k>0$.
\item If $c>0$, then $c$ and $\F$ are admissible if and only if $F_1$ is admissible (in the sense of Definition \ref{dadmch}).
\item If $F_1=\emptyset$, $c$ and $\F$ are admissible if and only if $c>0$.
\item If $c$ and $\F$ are admissible then $c+s_\F$ and $\F _\Downarrow$ are admissible (where the positive integer $s_\F$ and the pair $\F _\Downarrow$ are defined by (\ref{defs0f}) and (\ref{deffd}), respectively).
\end{enumerate}
\end{lemma}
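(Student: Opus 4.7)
The plan is to prove the four assertions in the order 2, 3, 4, 1, since each of the first three follows from a direct manipulation of the admissibility inequality (\ref{defadm}), while assertion 1 relies on a more delicate sign analysis.

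\emph{Parts 2 and 3.} When $c>0$ we have $\hat c=0$ and the denominator in (\ref{defadm}) is $1$; for $x\in\NN$ and $f\in F_2$ with $f\ge 1$, the factor $x+c+f$ is strictly positive, so $\prod_{f\in F_2}(x+c+f)>0$ and the inequality reduces to $\prod_{f\in F_1}(x-f)\ge 0$ for $x\in\NN$, which is exactly the Charlier/Hermite admissibility of $F_1$ (Definition \ref{dadmch}). This gives Part 2. For Part 3 $(\Leftarrow)$ I use Part 2 with $F_1=\emptyset$, which is trivially Charlier-admissible. For Part 3 $(\Rightarrow)$, assuming $c<0$, I would evaluate (\ref{defadm}) at the nonnegative integer $x=\hat c-1$: the denominator $(c+\hat c-1)_{\hat c}$ has first factor $c+\hat c-1\in(-1,0)$ and all remaining factors in $(0,\infty)$ (since $c+\hat c>0$), hence is strictly negative, while each numerator factor $c+\hat c-1+f$ with $f\in F_2$ satisfies $c+\hat c-1+f\ge c+\hat c>0$. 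This gives a strictly negative left-hand side, contradicting (\ref{defadm}).

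\emph{Part 4.} The plan is to substitute $x\mapsto x+s_\F$ in the admissibility of $(c,\F)$, which then holds for all $x\in\NN$. The definition of $s_\F=s_{F_1}$ forces $\{1,2,\ldots,s_\F-1\}\subseteq F_1$, so
\[
\prod_{f\in F_1}(x+s_\F-f)=(x+1)_{s_\F-1}\,\prod_{f'\in(F_1)_\Downarrow}(x-f'),
\]
and the Pochhammer prefactor $(x+1)_{s_\F-1}$ is strictly positive on $\NN$. Writing $c'=c+s_\F$, the $F_2$ factors become $x+c'+f$. For the denominators, $(x+s_\F+c)_{\hat c}=(x+c')_{\hat c}$ and $(x+c')_{\widehat{c'}}$ (with $\widehat{c'}=\max\{\hat c-s_\F,0\}$) differ by a product of factors of the form $x+c+\hat c+i$ with $i\ge 0$, each strictly positive on $\NN$. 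After cancelling the positive prefactor and the positive ratio of Pochhammers, the transformed inequality is precisely the admissibility condition for $(c',\F_\Downarrow)$.

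\emph{Part 1.} This is the hardest. One may assume $c<0$, since for $c>0$ the conclusion $c+k>0$ is immediate. The plan is to analyze the sign of the left-hand side of (\ref{defadm}) at each integer $n\in\{0,1,\ldots,\hat c-1\}$: the denominator $(n+c)_{\hat c}$ contains exactly $\hat c-n$ negative factors and thus has sign $(-1)^{\hat c-n}$, while the numerator (when nonzero) has sign $(-1)^{|F_1\cap(n,\infty)|+|F_2\cap[1,\hat c-n-1]|}$. Admissibility therefore imposes a parity condition at every $n\in\{0,\ldots,\hat c-1\}\setminus F_1$. Subtracting the conditions at two consecutive ``free'' indices $n,n+1\notin F_1$ forces $\hat c-1-n\in F_2$, which produces distinct elements of $F_2$, one per free pair. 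Combining this count with the endpoint parity at $n=\hat c-1$ and $n=\hat c$ (which forces $|F_1\cap[\hat c,\infty)|$ to be odd, hence positive) yields $k=k_1+k_2\ge\hat c$, equivalently $c+k>0$. The main obstacle will be the bookkeeping around the blocks of consecutive elements of $F_1\cap[1,\hat c-1]$, where the parity condition is vacuous; the endpoint conditions flanking each such block supply the extra relations required to close the count.
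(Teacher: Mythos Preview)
Your arguments for Parts 2, 3, and 4 are correct and coincide with the paper's: the paper proves (2) by the same positivity observation, proves (3) by the same evaluation at $x=\hat c-1$, and proves (4) by the same shift $x\mapsto x+s_\F$, absorbing the extra factors into strictly positive Pochhammer products (the paper uses $(x+c')_{\hat c+s_\F}$ rather than $(x+c')_{\widehat{c'}}$, but these differ by a positive factor, so the two normalizations are equivalent).

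For Part 1 your outline is on the right track but is explicitly left incomplete, and the deferred ``bookkeeping'' is where all the content lies. Two remarks. First, your endpoint claim that the parity at $n=\hat c-1$ forces $|F_1\cap[\hat c,\infty)|$ to be odd is only immediate when $\hat c-1\notin F_1$; when $\hat c-1\in F_1$ the condition at $n=\hat c-1$ is vacuous and you must instead extract the missing unit from the last block transition. Second, your consecutive-pair count alone gives only $k_2\ge \sum_i x_i-l$ (with $l$ the number of free blocks), which combined with $k_1\ge\sum_i y_i$ yields $k\ge\hat c-l$; closing the gap of $l$ requires one extra $F_2$-element per block transition plus one extra $F_1$-element from the endpoint, and verifying that all of these are distinct from the already-counted elements is exactly the case analysis you defer. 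The paper handles this more cleanly: it first proves a uniform bound on the number of sign changes of $\prod_{f\in F_2}(x+c+f)$ along any index set $I$ by $|F_2\cap(\hat c-I)|$, then decomposes $\{0,\dots,\hat c\}$ into alternating free blocks $X_i$ and $F_1$-blocks $Y_i$, and shows (using that the denominator alternates sign on $\{0,\dots,\hat c\}$) that the $F_2$-product must change sign $(-1+\sum_i x_i)$ times on $\bigcup_i X_i$. This gives $\sum_i x_i\le k_2+1$ in one stroke, and together with $\sum_i y_i\le k_1$ yields $\hat c+1\le k+1$. Your parity-subtraction is the pointwise version of the same count; if you organize it via the block decomposition you will recover the paper's argument.
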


\begin{proof}

\noindent
Proof of (1). We first point out that
\begin{equation}\label{xcN}
\sign ((x+c)_{\hat c})=\begin{cases} (-1)^{{\hat c}-x},& 0\le x\le {\hat c},\\
1,& x>{\hat c}. \end{cases}
\end{equation}
Given an $l$-tuple $A=(a_1,\ldots, a_l)$ of non null real numbers, we denote by $n_{\pm}(A)$ the number of sign changes along the elements of $A$ (for instance, if $A=(-\pi,2,1,-\sqrt 2,-1,1,1)$ then $n_{\pm}(A)=3$).

We next prove that given a finite set $I$ of nonnegative integers with elements ordered in increasing size, we have
\begin{equation}\label{dnpm}
n_{\pm}((\prod_{f\in F_2}(x+c+f),x\in I))\le |F_2\cap ({\hat c}-I)|,
\end{equation}
where $|X|$ denotes the number of elements of the finite set $X$ and $\hat c-I$ denotes the set $\{\hat c-i:i\in I\}$.
Indeed, for $a\in I$, write $A_a=\{ f\in F_2:a+c+f<0\}$. Notice that $\prod_{f\in F_2}(a+c+f)$ is positive or negative depending on whether $|A_a|$ is even or odd, respectively. Take now consecutive numbers $x=a, x=a+1\in I$ where $\prod_{f\in F_2}(x+c+f)$ changes its sign.
Then $A_{a+1}\not = A_a$ because $|A_{a+1}|$ and $|A_{a}|$ has different parity. Since $A_{a+1}\subset A_a$, there exists $f_a\in A_a\setminus A_{a+1}$.
That is, $a+c+f_a<0<a+c+1+f_a$, or $-a-f_a-1<c<-a-f_a$. In other words $-{\hat c}=-a-f_a$, and then $f_a\in {\hat c}-I$.

Take now $x=a,x=b\in I$, with $a+1<b$, and $i\not \in I$ if $a<i<b$, and where $\prod_{f\in F_2}(x+c+f)$ changes its sign. Abusing of notation we write $f_a=\max A_a$. If $a+c+f_a+1>0$, proceeding as before we get $f_a\in {\hat c}-I$ and $f_a\not \in A_b$. On the other hand, if $a+c+f_a+1<0$, by definition of $f_a$, we conclude that $f_a+1\not \in F_2$, and then $\prod_{f\in F_2}(x+c+f)$ does not change its sign from $a$ to $a+1$. In the same way, we have that if $a+c+f_a+2>0$ then $f_a\in {\hat c}-I$ and $f_a\not \in A_b$, while if $a+c+f_a+2<0$ then $\prod_{f\in F_2}(x+c+f)$ does not change its sign from $a$ to $a+2$; in particular $b>a+2$. Since $\prod_{f\in F_2}(x+c+f)$ changes its sign from $a$ to $b$ proceeding in the same way, we can conclude that $f_a\in {\hat c}-I$ and $f_a\not \in A_b$.

We have then proved that if $\prod_{f\in F_2}(x+c+f)$ changes its sign in two consecutive elements $a$ and $b$ of $I$ then there exists $f_a\in F_2$ satisfying $f_a\in {\hat c}-I$, $f_a\in A_a$ and $f_a\not \in A_b$. This implies that (\ref{dnpm}) holds.

Decompose now the set $\{0,1,\ldots , {\hat c}\}$ as follows:
$$
\{0,1,\ldots , {\hat c}\} = X_1\cup Y_1\cup X_2\cup Y_2\cup \ldots \cup X_l\cup Y_l,
$$
where each $X_i$, $Y_i$ is formed by consecutive nonnegative integers, $1+\max X_i=\min Y_i$, $1+\max Y_i=\min X_{i+1}$, $X_i\cap F_1=\emptyset$, $Y_i\subset F_1$ and $Y_l=\emptyset $ if $f^1_{k_1}<{\hat c}$ (let us remind that $k_1$ is the number of elements of $F_1$ and
that $f^1_{k_1}$ is the maximum element of $F_1$). Write $x_i=|X_i|$, $y_i=|Y_i|$, $i=1,\ldots , l$. Since $X_i, Y_i$, $i=1,\ldots , l$, are disjoint sets and $Y_i\subset F_1$, we get
\begin{equation}\label{ssk2}
{\hat c}+1=\sum_{i=1}^l(x_i+y_i),\quad \sum_{i=1}^ly_i\le k_1.
\end{equation}
Notice that the sign of $\prod_{f\in F_1}(x-f)$ is constant in each $X_i$. Since $(x+c)_{\hat c}$ alternates its sign in consecutive numbers
of $\{0,1,\ldots \hat c\}$ (see (\ref{xcN})), (\ref{defadm}) implies that $\prod_{f\in F_2}(x+c+f)$ changes its sign $(x_i-1)$-times in $X_i$. On the other hand, one can carefully check that also $\prod_{f\in F_2}(x+c+f)$ changes its sign between the maximum element of $X_i$ and the minimum element of $X_{i+1}$. That means that $\prod_{f\in F_2}(x+c+f)$ changes its sign $(-1+\sum_{i=1}^lx_i)$-times in $\cup_{i=1}^lX_i$. Hence, (\ref{dnpm}) gives
$$
\sum_{i=1}^lx_i\le |F_2\cap({\hat c}-\cup_{i=1}^lX_i)|+1\le k_2 +1.
$$
(\ref{ssk2}) gives then ${\hat c}+1\le k_1+k_2+1$. From where we get $c+k>0$.

\bigskip

\noindent
Proof of (2). It is a straightforward consequence of the following fact: if $c>0$ then
$$
\sign \left(\frac{\prod_{f\in F_1}(x-f)\prod_{f\in F_2}(x+c+f)}{(x+c)_{\hat c}}\right)=\sign \prod_{f\in F_1}(x-f).
$$

\bigskip

\noindent
Proof of (3). Assume $F_1=\emptyset$, which it is an admissible set (in the sense of Definition \ref{dadmch}). If $c>0$, using (2) of this Lemma we deduce that $c$ and $\F$ are admissible. On the other hand, if $c$ and $\F$ are admissible we have ($F_1=\emptyset$ implies that $\prod_{f\in F_1}(x-f)=1$, $x\ge 0$) from (\ref{defadm}) that
$$
\sign \prod_{f\in F_2}(x+c+f)=\sign (x+c)_{\hat c},\quad x\ge 0.
$$
If $c<0$, then ${\hat c}>0$. Take $x_{\hat c}={\hat c}-1\ge 0$. Hence $\sign (x_{\hat c}+c)_{\hat c}=-1$ and so
 $\sign \prod_{f\in F_2}(x_{\hat c}+c+f)=-1$. Since $-{\hat c}-1<c<{\hat c}$, we have $-1<x_{\hat c}+c$ and then
$$
\{f\in F_2: x_{\hat c}+c+f<0\} \subset \{f\in F_2: -1+f<0\}=\emptyset .
$$
Hence $\sign \prod_{f\in F_2}(x_{\hat c}+c+f)=1$. Which it is a contradiction.

\bigskip

\noindent
Proof of (4). Since $\F _\Downarrow=\{ (F_1) _\Downarrow, F_2\}$, we have to prove that
$$
H(x)=\frac{\prod_{f\in (F_1) _\Downarrow}(x-f)\prod_{f\in F_2}(x+c+s_\F+f)}{(x+c+s_\F)_{{\hat c}+s_\F}}\ge 0,\quad x\ge 0.
$$
Using the definition of $(F_1) _{\Downarrow}$ (\ref{defffd}), we have for $x\ge 0$ that
$$
H(x)=\frac{H(x+s_\F)}{\prod_{j=1}^{s_\F}(x+s_\F-j)(x+s_\F+c+{\hat c})_{s_\F}}\ge 0 .
$$

\end{proof}

\subsection{Meixner and Laguerre polynomials}
We include here basic definitions and facts about Meixner and Laguerre polynomials, which we will need in the following Sections.

For $a\not =0, 1$ we write $(m_{n}^{a,c})_n$ for the sequence of Meixner polynomials defined by
\begin{equation}\label{Mxpol}
m_{n}^{a,c}(x)=\frac{a^n}{(1-a)^n}\sum _{j=0}^n a^{-j}\binom{x}{j}\binom{-x-c}{n-j}
\end{equation}
(we have taken a slightly different normalization from the one used in \cite{Ch}, pp. 175-7, from where
the next formulas can be easily derived; see also \cite{KLS}, pp, 234-7 or \cite{NSU}, ch. 2).
Meixner polynomials are eigenfunctions of the following second order difference operator
\begin{equation*}\label{Mxdeq}
D_{a,c} =\frac{x\Sh_{-1}-[(1+a)x+ac]\Sh_0+a(x+c)\Sh_1}{a-1},\qquad D_{a,c} (m_{n}^{a,c})=nm_{n}^{a,c},\quad n\ge 0.
\end{equation*}
When $a\not =0, 1$, they satisfy the following three term recurrence formula ($m_{-1}=0$)
\begin{equation}\label{Mxttrr}
xm_n=(n+1)m_{n+1}-\frac{(a+1)n+ac}{a-1}m_n+\frac{a(n+c-1)}{(a-1)^2}m_{n-1}, \quad n\ge 0
\end{equation}
(to simplify the notation we remove the parameters in some formulas).
Hence, for $a\not =0,1$ and $c\not =0,-1,-2,\ldots $, they are always orthogonal with respect to a moment functional $\rho_{a,c}$, which we
normalize
by taking $\langle \rho_{a,c},1\rangle =\Gamma(c)/(1-a)^c$. For $0<\vert a\vert<1$ and $c\not =0,-1,-2,\ldots $, we have
\begin{equation*}\label{MXw}
\rho_{a,c}=\sum _{x=0}^\infty \frac{a^{x}\Gamma(x+c)}{x!}\delta _x,
\end{equation*}
and
\begin{equation}\label{norme}
\langle m_n^{a,c},m_n^{a,c}\rangle =\frac{a^{n}\Gamma(n+c)}{n!(1-a)^{2n+c}}.
\end{equation}
The moment functional $\rho_{a,c}$ can be represented by a positive measure only when $0<a<1$ and $c>0$.

Meixner polynomials satisfy the following identities ($\quad n,m\in \NN, x\in \RR$)
\begin{align}\label{sdm}
m_{n}^{a,c}(x+1)-m_{n}^{a,c}(x)&=m_{n-1}^{a,c+1}(x),\\ \label{sdm2}
m_{n}^{1/a,c}(x+1)-am_{n}^{1/a,c}(x)&=(1-a)m_{n}^{1/a,c+1}(x),\\
\label{sdm2b}
a^{m-n}n!(1+c)_{m-1}m_{n}^{a,c}(m)&=(a-1)^{m-n}m!(1+c)_{n-1}m_{m}^{a,c}(n),\\
\label{sdm3}
m_{n}^{a,c}(x)&=(-1)^nm_{n}^{1/a,c}(-x-c).
\end{align}

For $\alpha\in\mathbb{R}$, we write $(L_n^\alpha )_n$ for the sequence of Laguerre polynomials
\begin{equation}\label{deflap}
L_n^{\alpha}(x)=\sum_{j=0}^n\frac{(-x)^j}{j!}\binom{n+\alpha}{n-j}
\end{equation}
(that and the next formulas can be found in \cite{EMOT}, vol. II, pp. 188--192; see also \cite{KLS}, pp, 241-244).

They satisfy the three-term recurrence formula ($L_{-1}^{\alpha}=0$)
\begin{equation*}\label{ttrrL}
xL_n^{\alpha}=-(n+1)L_{n+1}^{\alpha}+(2n+\alpha+1)L_n^{\alpha}-(n+\alpha)L_{n-1}^{\alpha}.
\end{equation*}
Hence, for $\alpha\neq-1,-2,\ldots$, they are orthogonal with respect to a measure $\mu_{\alpha}=\mu_{\alpha}(x)dx$. This measure is positive
only when $\alpha>-1$ and then
$$
\mu_{\alpha}(x) =x^\alpha e^{-x}, x>0.
$$
The Laguerre polynomials are eigenfunctions of the following second-order differential operator
\begin{equation}\label{DopL}
D_{\alpha}=-x\left(\frac{d}{dx}\right)^2-(\alpha+1-x)\frac{d}{dx},\quad D_{\alpha}(L_n^{\alpha})=nL_n^{\alpha},\quad n\geq0.
\end{equation}
We will also use the following formulas
\begin{equation}\label{Lagder}
    \left(L_n^{\alpha}\right)'=-L_{n-1}^{\alpha+1},
\end{equation}
\begin{equation}\label{Lagab}
   L_n^{\alpha}=L_{n-1}^{\alpha}+L_n^{\alpha-1}.
\end{equation}
One can obtain Laguerre polynomials from Meixner polynomials using the limit
\begin{equation}\label{blmel}
\lim_{a\to 1}(a-1)^nm_n^{a,c}\left(\frac{x}{1-a}\right)=L_n^{c-1}(x)
\end{equation}
see \cite{KLS}, p. 243 (take into account that we are using for the
Meixner polynomials a different normalization to that in \cite{KLS}). The previous limit is uniform in compact sets of $\CC$.

\section{Constructing polynomials which are eigenfunctions of second order difference operators}
As in Section \ref{sfspi}, $\F=(F_1,F_2)$ will denote a pair of finite sets of positive integers. We will write $F_i=\{ f_1^i,\cdots , f_{k_i}^i\}$, with $f_j^i<f_{j+1}^i$, $i=1,2$. Hence $k_i$ is the number of elements of $F_i$ and $f_{k_i}$ is the maximum element of $F_i$.

For real numbers $a,c$, with $a\not = 0,1$ and $c\not =0,-1,-2,\ldots$, we associate to each pair $\F$ the polynomials $m_n^{a,c;\F}$, $n\in \sigma_\F$, displayed in the following definition.
It turns out that these polynomials are always eigenfunctions of a second order difference operator with rational coefficients. We call them exceptional Meixner polynomials when, in addition, they are orthogonal and complete with respect to a positive measure (this will happen as long as $c$ and the pair $\F$ are admissible; see Definition \ref{defadm} in the previous Section).

\begin{definition}
Let $\F =(F_1,F_2)$ be a pair of finite sets of positive integers. We define the polynomials $m_n^{a,c;\F}$, $n\in \sigma _\F$, as
\begin{equation}\label{defmex}
m_n^{a,c;\F}(x)=  \left|
  \begin{array}{@{}c@{}cccc@{}c@{}}
    & m_{n-u_\F}^{a,c}(x)&m_{n-u_\F}^{a,c}(x+1)&\cdots &m_{n-u_\F}^{a,c}(x+k) & \\
    \dosfilas{ m_{f}^{a,c}(x) & m_{f}^{a,c}(x+1) &\cdots  & m_{f}^{a,c}(x+k) }{f\in F_1} \\
    \dosfilas{ m_{f}^{1/a,c}(x) & m_{f}^{1/a,c}(x+1)/a & \cdots & m_{f}^{1/a,c}(x+k)/a^k }{f\in F_2}
  \end{array}
  \right|
\end{equation}
where the number $u_\F$ and the infinite set of nonnegative integers $\sigma _\F$ are defined by (\ref{defuf}) and (\ref{defsf}), respectively.
\end{definition}
The determinant (\ref{defmex}) should be understood as explained in the Introduction (see (\ref{defdosf})).

To simplify the notation, we will sometimes write $m_n^\F=m_n^{a,c;\F}$.

Using Lemma 3.4 of \cite{DdI}, we deduce that $m_n^\F$, $n\in \sigma _\F$, is a polynomial of degree $n$ with leading coefficient equal to
\begin{equation}\label{lcrn}
(-1)^{k_2(k_1+1)} \frac{(a-1)^{k_2(k_1+1)}V_{F_1}V_{F_2}\prod_{f\in F_1}(f-n+u_\F)}{a^{k_2k_1+\binom{k_2+1}{2}}(n-u_\F)!\prod_{f\in F_1}f!\prod_{f\in F_2}f!},
\end{equation}
where $V_F$ is the Vandermonde determinant (\ref{defvdm}). With the convention that $m_n^{a,c}=0$ for $n<0$, the determinant (\ref{defmex}) defines a polynomial for any $n\ge 0$, but for $n\not \in \sigma_\F$ we have $m_n^\F=0$.

Combining columns in (\ref{defmex}) and taking into account (\ref{sdm}) and (\ref{sdm2}), we have the alternative definition
\begin{equation}\label{defmexa}
m_n^{a,c;\F}(x)= \left|
  \begin{array}{@{}c@{}cccc@{}c@{}}
    & m_{n-u_\F}^{a,c}(x)&m_{n-u_\F-1}^{a,c+1}(x)&\cdots &m_{n-u_\F-k}^{a,c+k}(x) & \\
    \dosfilas{ m_{f}^{a,c}(x) & m_{f-1}^{a,c+1}(x) &\cdots  & m_{f-k}^{a,c+k}(x) }{f\in F_1} \\
    \dosfilas{ m_{f}^{1/a,c}(x) & \frac{1-a}{a}m_{f}^{1/a,c+1}(x) & \cdots & \frac{(1-a)^k}{a^k}m_{f}^{1/a,c+k}(x)}{f\in F_2}
  \end{array}
  \right| .
\end{equation}

The  polynomials $m_n^\F$, $n\in \sigma_\F$, are strongly related by duality with the polynomials $q_n^\F$, $n\ge 0$, defined by
\begin{equation}\label{defqnme}
q_n^\F(x)=\frac{\left|
  \begin{array}{@{}c@{}cccc@{}c@{}}
    & m_n^{a,c}(x-u_\F)&m_{n+1}^{a,c}(x-u_\F)&\cdots &m_{n+k}^{a,c}(x-u_\F) & \\
    \dosfilas{ m_{n}^{a,c}(f) & m_{n+1}^{a,c}(f) &\cdots  & m_{n+k}^{a,c}(f) }{f\in F_1} \\
    \dosfilas{m_{n}^{1/a,c}(f) & -m_{n+1}^{1/a,c}(f) & \cdots & (-1)^{k}m_{n+k}^{1/a,c}(f)}{f\in F_2}
  \end{array}
  \right|}{(-1)^{nk_2}\prod_{f\in F_1}(x-f-u_\F)\prod_{f\in F_2}(x+c+f-u_\F)}.
\end{equation}

\begin{lemma}\label{lem3.2}
If $u$ is a nonnegative integer and $v\in \sigma_\F$, then
\begin{equation}\label{duaqnrn}
q_u^\F(v)=\kappa\xi_u\zeta_vm_v^\F(u),
\end{equation}
where
\begin{align*}
\kappa&=\frac{(-1)^{\sum _{f\in F_2}f}a^{k_2(k_1+1)+\sum _{f\in F_2}f}\prod_{f\in F_1}f!\prod_{f\in F_2}f!}{(a-1)^{k_2(k_1+1)}\prod_{f\in F_1}(1+c)_{f-1}\prod_{f\in F_2}(1+c)_{f-1}},\\
\xi_u&=\frac{a^{(k_1+1)u}\prod_{i=0}^k(1+c)_{u+i-1}}
{(a-1)^{(k+1)u}\prod_{i=0}^k(u+i)!},\\
\zeta_v&=\frac{(a-1)^{v}(v-u_\F)!}{a^v(1+c)_{v-u_\F-1}\prod_{f\in F_1}(v-f-u_\F)\prod_{f\in F_2}(v+c+f-u_\F)}.
\end{align*}
\end{lemma}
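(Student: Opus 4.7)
The plan is to prove the identity entry-by-entry: every matrix element appearing in the numerator of $q_u^\F(v)$ differs from the corresponding matrix element in $m_v^\F(u)$ by a factor that splits as a product of a column-dependent piece times a row-dependent piece, and multilinearity then collapses the two determinants into each other up to the predicted constant.

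First I would apply the Meixner duality (\ref{sdm2b}) (combined with its $a \mapsto 1/a$ version, and where needed with (\ref{sdm3})) to each entry of the numerator determinant in (\ref{defqnme}). In that determinant the Meixner polynomial in column $j$ (for $j=0,\ldots,k$) carries degree $u+j$, while the argument is $v-u_\F$ in the first row, $f$ in an $F_1$-row and $f$ in an $F_2$-row (with an extra $(-1)^j$ factor). Identity (\ref{sdm2b}) turns $m_{u+j}^{a,c}(\text{arg})$ into $m_{\text{arg}}^{a,c}(u+j)$ multiplied by the scalar
\[
\frac{(a-1)^{\mathrm{arg}-u-j}\,(\mathrm{arg})!\,(1+c)_{u+j-1}}{a^{\mathrm{arg}-u-j}\,(u+j)!\,(1+c)_{\mathrm{arg}-1}},
\]
and an analogous formula (with $a$ replaced by $1/a$) handles the $F_2$-rows, converting $(-1)^j m_{u+j}^{1/a,c}(f)$ into $m_f^{1/a,c}(u+j)/a^j$ times a scalar. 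Crucially, each of these scalars factorises as $A_j\cdot B_{\mathrm{row}}$, where $A_j$ depends only on $j$ (through $a^{u+j}/((a-1)^{u+j}(u+j)!)$ and $(1+c)_{u+j-1}$) and $B_{\mathrm{row}}$ depends only on the row.

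Next I would pull the column factors $A_j$ (giving $\prod_{j=0}^k A_j$) and the row factors $B_{\mathrm{row}}$ outside the determinant, which now coincides with the determinant in (\ref{defmex}) evaluated at $(v,u)$, that is with $m_v^\F(u)$. The remaining work is bookkeeping: I would verify that $\prod_j A_j$ contributes $\prod_{i=0}^k \frac{a^{u+i}(1+c)_{u+i-1}}{(a-1)^{u+i}(u+i)!}$; that the $F_1$ row factors contribute $\prod_{f\in F_1}\frac{(a-1)^f f!}{a^f(1+c)_{f-1}}$; that the $F_2$ row factors contribute $\prod_{f\in F_2}\frac{(-1)^{f+u}(a-1)^f f!}{a^u (1+c)_{f-1}}$, whose sign $\prod_{f\in F_2}(-1)^{f+u} = (-1)^{k_2 u+\sum_{f\in F_2} f}$ absorbs the $(-1)^{uk_2}$ in the denominator of (\ref{defqnme}); and that the first-row (variable) factor contributes $\frac{(a-1)^{v-u_\F}(v-u_\F)!}{a^{v-u_\F}(1+c)_{v-u_\F-1}}$. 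Finally, dividing by the polynomial denominator $\prod_{f\in F_1}(v-f-u_\F)\prod_{f\in F_2}(v+c+f-u_\F)$ in (\ref{defqnme}) and multiplying and dividing by $a^{u_\F}/(a-1)^{u_\F}$ reshapes the first-row factor into precisely $\zeta_v$.

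The main obstacle will be the careful tracking of the $a$-exponents, particularly the fact that the $F_2$-row factors carry $1/a^u$ each (so $1/a^{k_2 u}$ in total), which is exactly what reduces the $a$-exponent in the column product from $a^{(k+1)u}$ to the $a^{(k_1+1)u}$ appearing in $\xi_u$; similarly, the factor $a^{k_2(k_1+1)+\sum_{f\in F_2}f}/a^{k_2 k_1+\binom{k_2+1}{2}}$ hiding in $\kappa$ comes from packaging together the $a$-contributions of the $F_2$ rows with those of the $F_1$ rows and the column product. Once these power counts are matched (they are forced by the structure of the duality relation, so no freedom is needed), collecting the remaining factorials, Pochhammer symbols, and Vandermonde-like products directly reproduces $\kappa$, $\xi_u$, $\zeta_v$, establishing the stated duality (\ref{duaqnrn}).
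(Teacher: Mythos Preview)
Your proposal is correct and takes essentially the same approach as the paper: the paper's own proof consists of the single sentence ``It is a straightforward consequence of the duality (\ref{sdm2b}) for the Meixner polynomials,'' and what you have written is precisely the entry-by-entry application of that duality (together with its $a\mapsto 1/a$ variant) to the determinant (\ref{defqnme}), followed by the multilinearity and bookkeeping that the paper leaves implicit.
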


\begin{proof}
It is a straightforward consequence of the duality (\ref{sdm2b}) for the Meixner polynomials.

\end{proof}

We now prove that the polynomials $m_n^\F$, $n\in \sigma_\F$, are eigenfunctions of a second order difference operator with rational coefficients. To establish the result in full, we need some more notations.
We denote by $\Omega _\F ^{a,c}(x)$ and $\Lambda _\F^{a,c}(x)$ the polynomials
\begin{align}\label{defom}
\Omega _\F^{a,c}(x)&=  \left|
  \begin{array}{@{}c@{}cccc@{}c@{}}
    \dosfilas{ m_{f}^{a,c}(x) & m_{f}^{a,c}(x+1) &\cdots  & m_{f}^{a,c}(x+k-1) }{f\in F_1} \\
    \dosfilas{ m_{f}^{1/a,c}(x) & m_{f}^{1/a,c}(x+1)/a & \cdots & m_{f}^{1/a,c}(x+k-1)/a^{k-1} }{f\in F_2}
  \end{array}
  \right|,\\
\label{deflam}
\Lambda _\F^{a,c}(x)&=\left|
  \begin{array}{@{}c@{}ccccc@{}c@{}}
    \dosfilas{ m_{f}^{a,c}(x) & m_{f}^{a,c}(x+1) &\cdots  & m_{f}^{a,c}(x+k-2)& m_{f}^{a,c}(x+k) }{f\in F_1} \\
    \dosfilas{ m_{f}^{1/a,c}(x) & \displaystyle \frac{m_{f}^{1/a,c}(x+1)}{a} & \cdots & \displaystyle \frac{m_{f}^{1/a,c}(x+k-2)}{a^{k-2}} & \displaystyle \frac{m_{f}^{1/a,c}(x+k)}{a^{k}} }{f\in F_2}
  \end{array}
  \right|.
\end{align}
To simplify the notation we sometimes write $\Omega_\F=\Omega_\F^{a,c}$, $\Lambda_\F^{a,c}=\Lambda_\F^{a,c}$.
Using Lemma 3.4 of \cite{DdI}, we deduce that the degree of both $\Omega _\F$ and $\Lambda_\F$
is $u_\F+k_1$. Moreover, the leading coefficient of $\Omega _\F$ is
$$
\frac{V_{F_1}V_{F_2}a^{\binom{k_2}{2}-k_2(k-1)}(a-1)^{k_1k_2}}{\prod_{f\in F_1}f!\prod_{f\in F_2}f!}.
$$

As for $m_n^\F$ (see (\ref{defmexa})), we have for $\Omega_\F$ the following alternative definition
\begin{equation}\label{defoma}
\Omega _\F(x)=\left|
  \begin{array}{@{}c@{}cccc@{}c@{}}
    \dosfilas{ m_{f}^{a,c}(x) & m_{f-1}^{a,c+1}(x) &\cdots  & m_{f-k+1}^{a,c+k-1}(x) }{f\in F_1} \\
    \dosfilas{ m_{f}^{1/a,c}(x) & \frac{1-a}{a}m_{f}^{1/a,c+1}(x) & \cdots & \frac{(1-a)^k}{a^k}m_{f}^{1/a,c+k}(x)}{f\in F_2}
  \end{array}
  \right|.
\end{equation}

From here and (\ref{defmexa}), it is easy to deduce that
\begin{equation}\label{rrom}
m_{u_\F}^{a,c;\F}(x)=\left(\frac{1-a}{a}\right)^{s_\F k_2}\Omega^{a,c+s_0}_{\F_\Downarrow }(x),
\end{equation}
where the positive integer $s_\F$ and the pair $\F_\Downarrow$ of finite sets of positive integers are defined by (\ref{defs0f}) and
(\ref{deffd}), respectively.

We also need the determinants $\Phi_n^\F$ and $\Psi_n^\F$, $n\ge 0$, defined by
\begin{align}\label{defphme}
\Phi^\F_n&=(-1)^{nk_2}\left|
  \begin{array}{@{}c@{}cccc@{}c@{}}
        \dosfilas{ m_{n}^{a,c}(f) & m_{n+1}^{a,c}(f) &\cdots  & m_{n+k-1}^{a,c}(f) }{f\in F_1} \\
    \dosfilas{ m_{n}^{1/a,c}(f) & -m_{n+1}^{1/a,c}(f) & \cdots & (-1)^{k-1}m_{n+k-1}^{1/a,c}(f)}{f\in F_2}
  \end{array}
  \right|,\\\label{defpsme}
\Psi_n^F&=(-1)^{nk_2}\left|
  \begin{array}{@{}c@{}ccccc@{}c@{}}
        \dosfilas{ m_{n}^{a,c}(f) & m_{n+1}^{a,c}(f) &\cdots  & m_{n+k-2}^{a,c}(f)& m_{n+k}^{a,c}(f) }{f\in F_1} \\
    \dosfilas{ m_{n}^{1/a,c}(f) & -m_{n+1}^{1/a,c}(f) & \cdots & (-1)^{k-2}m_{n+k-2}^{1/a,c}(f)& (-1)^{k}m_{n+k}^{1/a,c}(f)}{f\in F_2}
  \end{array}
  \right|.
\end{align}
Using the duality (\ref{sdm2b}), we have
\begin{align}\label{duomph}
\Omega _\F(n)&=\left(\frac{a}{a-1}\right)^{u_\F+n+k}\frac{(1+c)_{n+k-1}}{(n+k)!\kappa \xi _n}\Phi_n^\F, \\\label{dulaps}
\Lambda _\F(n)&=\left(\frac{a}{a-1}\right)^{u_\F+n+k-1}\frac{(1+c)_{n+k-2}}{(n+k-1)!\kappa \xi _n}\Psi_n^\F.
\end{align}
Taking into account (\ref{sdm3}) and according to Lemma \ref{sze}, as long as $\Phi_n^\F\not =0$, $n\ge 0$, the  polynomials  $q_n^\F$, $n\ge 0$, are orthogonal with respect to the measure
\begin{equation}\label{mraf}
\rho _{a,c}^{\F}=\sum _{x=u_\F}^\infty \prod_{f\in F_1}(x-f-u_\F)\prod_{f\in F_2}(x+c+f-u_\F)\frac{a^{x-u_\F}\Gamma(x+c-u_\F)}{(x-u_\F)!}\delta _x.
\end{equation}
Notice that the measure $\rho_{a,c}^{\F}$ is supported in the infinite set of nonnegative integers  $\sigma_\F$ (\ref{defsf}).

\begin{theorem}\label{th3.3} Let $\F=(F_1,F_2)$ be a pair of finite sets of positive integers. Then the polynomials $m_n^\F$ (\ref{defmex}), $n\in \sigma _\F$,
are common eigenfunctions of the second order difference operator
\begin{equation}\label{sodomex}
D_\F=h_{-1}(x)\Sh_{-1}+h_0(x)\Sh_0+h_1(x)\Sh_{1},
\end{equation}
where
\begin{align}\label{jpm1}
h_{-1}(x)&=\frac{x\Omega_\F(x+1)}{(a-1)\Omega_\F(x)},\\\label{jpm2}
h_0(x)&=-\frac{(1+a)(x+k)+ac}{a-1}+u_\F+\Delta\left(\frac{a(x+c+k-1)\Lambda_\F(x)}{(a-1)\Omega_\F(x)}\right),\\\label{jpm3}
h_1(x)&=\frac{a(x+c+k)\Omega_\F(x)}{(a-1)\Omega_\F(x+1)},
\end{align}
and $\Delta $ denotes the first order difference operator $\Delta f=f(x+1)-f(x)$. Moreover $D_\F(m_n^\F)=nm_n^\F$, $n\in \sigma_\F$.
\end{theorem}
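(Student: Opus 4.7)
The plan is to read off the second-order difference equation for $m_n^\F$ from a three-term recurrence for its dual sequence $q_n^\F$, and then translate the recurrence coefficients into the form (\ref{jpm1})--(\ref{jpm3}) by means of the determinantal identities (\ref{duomph})--(\ref{dulaps}). I would start by recognising that the polynomials $q_n^\F$ defined in (\ref{defqnme}) coincide, up to the symmetry (\ref{sdm3}) applied to the rows indexed by $F_2$ (which absorbs the $(-1)^{nk_2}$ in the denominator), with the polynomials produced by the Christoffel construction (\ref{mata00}) of Section \ref{secChr} applied to the (shifted) Meixner moment functional with annihilator $\prod_{f\in F_1}(x-f-u_\F)\prod_{f\in F_2}(x+c+f-u_\F)$; the corresponding Christoffel-transformed measure is precisely $\rho_{a,c}^\F$ in (\ref{mraf}). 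As long as the determinants $\Phi_n^\F$ do not vanish, Lemma \ref{sze} shows that $q_n^\F$ has degree $n$ and is orthogonal with respect to $\rho_{a,c}^\F$, and Favard then supplies a three-term recurrence
\begin{equation*}
x\, q_n^\F(x)=A_n q_{n+1}^\F(x)+B_n q_n^\F(x)+C_n q_{n-1}^\F(x),
\end{equation*}
whose outer coefficients $A_n$, $C_n$ are fixed by the leading coefficients of $q_n^\F$ together with the norm formula (\ref{n2q}), while $B_n$ is determined by the top two coefficients of $q_n^\F$. Expanding the first row of (\ref{defqnme}) identifies these quantities with $\Phi_n^\F$, $\Phi_{n+1}^\F$ and $\Psi_n^\F$ defined in (\ref{defphme})--(\ref{defpsme}).

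Next, I would plug the duality identity $q_u^\F(v)=\kappa\,\xi_u\,\zeta_v\, m_v^\F(u)$ from Lemma \ref{lem3.2} into this recurrence, evaluated at $x=v\in\sigma_\F$, and divide through by $\kappa\xi_u\zeta_v$. Since $\zeta_v$ does not depend on $u$, this converts a recurrence in the index $u$ into a three-term identity in the variable $u$ for the polynomial $m_v^\F$, i.e.\ a second-order difference equation in $u$ with eigenvalue $v$. Translating $\Phi_n^\F$ into $\Omega_\F(n)$ and $\Psi_n^\F$ into $\Lambda_\F(n)$ via (\ref{duomph})--(\ref{dulaps}), and substituting the explicit value of $\xi_u$ given in Lemma \ref{lem3.2}, the dual outer coefficients $A_u\,\xi_{u+1}/\xi_u$ and $C_u\,\xi_{u-1}/\xi_u$ collapse, after cancellation of Pochhammer symbols and factorials, onto exactly $h_1(u)$ and $h_{-1}(u)$ of (\ref{jpm1}) and (\ref{jpm3}). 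The middle coefficient, which is a sum of a term coming from $\xi_{u+1}/\xi_u$ and a $\Psi/\Phi$-ratio, must then be reassembled into the discrete derivative $\Delta\!\left(a(x+c+k-1)\Lambda_\F(x)/((a-1)\Omega_\F(x))\right)$ appearing in (\ref{jpm2}). This yields $D_\F m_n^\F(u)=n\, m_n^\F(u)$ at every integer $u$ for which the derivation is valid.

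The main obstacle is twofold. The Christoffel step presupposes $\Phi_n^\F\neq 0$, equivalently $\Omega_\F(n)\neq 0$, for every $n\geq 0$; if some $\Phi_n^\F$ vanishes the recurrence for $q_n^\F$ is only well-defined on a generic locus of the parameters $a,c$. To upgrade the identity to every admissible $(a,c)$ I would view $D_\F m_n^\F(x)-n\, m_n^\F(x)$ as a rational function in $x$ (with poles only at the zeros of $\Omega_\F(x)\Omega_\F(x+1)$) and rational in $a,c$, already shown to vanish at infinitely many integer points on a Zariski-dense parameter set, and extend by continuity. The other, more technical, difficulty is the bookkeeping needed to recognise the middle dual coefficient as a genuine $\Delta$-difference rather than an unrelated sum of two ratios; this matching is precisely the step that mirrors the Charlier computation of \cite{duch}, which the author announces he will not repeat in full. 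Once done, both sides of the claimed identity agree on infinitely many integers, hence everywhere, for every $n\in\sigma_\F$.
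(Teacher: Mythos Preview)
Your proposal is correct and follows essentially the same approach as the paper: obtain a three-term recurrence for the dual sequence $q_n^\F$, transport it through the duality (\ref{duaqnrn}) to a second-order difference equation for $m_v^\F$, and then convert $\Phi_n^\F,\Psi_n^\F$ into $\Omega_\F,\Lambda_\F$ via (\ref{duomph})--(\ref{dulaps}). The only minor difference is that you recover the recurrence coefficients of $q_n^\F$ abstractly from Favard together with leading-coefficient and norm considerations, whereas the paper's reference to the Meixner recurrence (\ref{Mxttrr}) points to the more direct route of multiplying the first row of (\ref{defqnme}) by $x-u_\F$ and applying (\ref{Mxttrr}) entrywise, which immediately produces $A_n,B_n,C_n$ as explicit ratios of the determinants $\Phi_n^\F,\Phi_{n+1}^\F,\Psi_n^\F$; your handling of the degenerate case $\Phi_n^\F=0$ by rational continuation in the parameters is also the standard device here.
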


\begin{proof}
The proof is similar to that of Theorem 3.3 in \cite{duch} but using here the three term recurrence relation for the Meixner polynomials
(\ref{Mxttrr}) and the dualities (\ref{duaqnrn}), (\ref{duomph}) and (\ref{dulaps}).
\end{proof}

\bigskip

The determinantal definition (\ref{defmex}) of the polynomials $m_n^\F$, $n\in \sigma_\F$, automatically implies a factorization for the corresponding difference operator $D_\F$ (\ref{sodomex}) in two difference operators of order $1$. This is a consequence of the Sylvester identity (see \cite{Gant}, pp. 32, or \cite{duch}, Lemma 2.1). This can be done by choosing one of the components of $\F=(F_1,F_2)$ and removing one element in the chosen component. An iteration of this procedure shows
that the polynomials $m_n^\F$, $n\in \sigma_\F$, and the corresponding difference operator $D_\F$  can be constructed by applying a sequence of at most $k$ Darboux transform (see the Definition \ref{dxt}) to the Meixner system (where $k$ is the number of elements of $\F$). We display the details in the following lemma, where we remove one element of the component $F_2$ of $\F$, and hence we have to assume $F_2\not =\emptyset$. A similar result can be proved by removing one element of the component $F_1$. The proof proceeds in the same way as the proof of Lemma 3.6 in \cite{duch} and it is omitted.

\begin{lemma}\label{lfe} Let $\F=(F_1,F_2)$ be a pair of finite sets of positive integers and assume $F_2\not =\emptyset$. We define the first order difference operators $A_\F$ and $B_\F$ as
\begin{align}
A_\F&=\frac{\Omega _\F(x+1)}{a\Omega_{\F_{2,\{ k_2\} }}(x+1)}\Sh_0-\frac{\Omega _\F(x)}{\Omega_{\F_{2,\{ k_2\} }}(x+1)}\Sh_1,
\\
B_\F&=\frac{ax\Omega _{\F_{2,\{ k_2\} }}(x+1)}{(a-1)\Omega_{\F}(x)}\Sh_ {-1}-\frac{a(x+c+k)\Omega _{\F_{2,\{ k_2\} }}(x)}{(a-1)\Omega_{\F}(x)}\Sh_0,
\end{align}
where $k_2$ is the number of elements of $F_2$ and the pair $\F_{2,\{ k_2\} }$ is defined by (\ref{deff2}).
Then $m_n^\F(x)=A_\F(m_{n-f_{k_2}^2+k_2-1}^{\F_{2,\{ k_2\} }})(x)$, $n\in \sigma_\F$. Moreover
\begin{align*}
D_{\F_{2,\{ k_2\} }}&=B_\F A_\F-(c+f_{k_2}^2-u_{\F_{2,\{ k_2\} }})Id,\\
D_{\F}&=A_\F B_\F-(c+f_{k_2}^2-u_{\F})Id.
\end{align*}
In other words, the system $(D_F,(m_n^\F)_{n\in \sigma _\F})$ can be obtained by applying a Darboux transform to the system
$(D_{\F_{2,\{ k_2\} }},(m_n^{\F_{2,\{ k_2\} }})_{n\in \sigma _{\F_{2,\{ k_2\} }}})$.
\end{lemma}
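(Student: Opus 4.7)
The plan is to mimic the proof of Lemma 3.6 in \cite{duch}, with the Sylvester determinant identity as the central tool. First I would verify the intertwining relation $m_n^\F = A_\F(m_{n-f_{k_2}^2+k_2-1}^{\F_{2,\{k_2\}}})$. To do this, I would apply Sylvester's identity to the $(k+1)\times(k+1)$ determinant (\ref{defmex}) defining $m_n^\F(x)$, pivoting on the two minors obtained by deleting the row of the $F_2$-block indexed by $f_{k_2}^2$ together with, respectively, the first and second columns. After expanding, the four $k\times k$ sub-determinants that appear are precisely $\Omega_\F(x)$, $\Omega_\F(x+1)$, and two determinants that, after the column combinations (\ref{sdm}), (\ref{sdm2}) used to derive (\ref{defmexa}), are proportional to $\Omega_{\F_{2,\{k_2\}}}(x+1)\, m_{n-f_{k_2}^2+k_2-1}^{\F_{2,\{k_2\}}}(x)$ and $\Omega_{\F_{2,\{k_2\}}}(x+1)\, m_{n-f_{k_2}^2+k_2-1}^{\F_{2,\{k_2\}}}(x+1)$. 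The index shift is forced by the arithmetic $u_\F - u_{\F_{2,\{k_2\}}} = f_{k_2}^2 - k_2 + 1$, which follows at once from the definition (\ref{defuf}). Assembling the pieces gives exactly the action of $A_\F$.

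Next, for the factorizations, I would first establish $D_\F = A_\F B_\F - (c+f_{k_2}^2 - u_\F)Id$ by direct computation. Writing $A_\F B_\F$ as a composition of two first order difference operators yields a second order difference operator of the form $\tilde h_{-1}\Sh_{-1}+\tilde h_0\Sh_0+\tilde h_1\Sh_1$. The coefficients of $\Sh_{-1}$ and $\Sh_1$ telescope nicely: the $\Omega_{\F_{2,\{k_2\}}}(x+1)$ factors cancel, producing exactly $h_{-1}(x)$ and $h_1(x)$ in (\ref{jpm1}), (\ref{jpm3}). The diagonal coefficient then matches $h_0(x)$ in (\ref{jpm2}) up to the constant shift $-(c+f_{k_2}^2-u_\F)$, since the difference of two copies of the diagonal is a first-order difference $\Delta(\cdot)$ that reproduces the $\Lambda_\F/\Omega_\F$ term. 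The companion identity $D_{\F_{2,\{k_2\}}} = B_\F A_\F - (c+f_{k_2}^2 - u_{\F_{2,\{k_2\}}})Id$ follows by the same calculation with $\F$ replaced by $\F_{2,\{k_2\}}$, or equivalently by applying the Darboux transform interpretation: the spectral shift $\lambda = c+f_{k_2}^2-u_{\F_{2,\{k_2\}}}$ is chosen so that the eigenvalue equation intertwines correctly, namely if $D_{\F_{2,\{k_2\}}}\phi = n\phi$ then $D_\F(A_\F\phi) = (n + u_\F - u_{\F_{2,\{k_2\}}})A_\F\phi = (n+f_{k_2}^2 - k_2+1)A_\F\phi$, which is consistent with $A_\F\phi$ having the shifted index $n+f_{k_2}^2-k_2+1\in \sigma_\F$.

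As a cross-check, since both $A_\F B_\F+\lambda Id$ and $D_\F$ are second order difference operators with rational coefficients sharing the infinite family $(m_n^\F)_{n\in\sigma_\F}$ of eigenfunctions (by the intertwining, the first step, and Theorem \ref{th3.3}), and the eigenvalues $n$ are all distinct, equality is forced; the same argument works for $B_\F A_\F$. This bypasses most of the explicit arithmetic.

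The main obstacle is the bookkeeping: identifying the four Sylvester minors correctly as multiples of $\Omega_\F$, $\Omega_{\F_{2,\{k_2\}}}$ and the shifted $m^{\F_{2,\{k_2\}}}$, and checking the constants of proportionality that arise from the column operations (\ref{sdm})--(\ref{sdm2}) and the rescaling of the $F_2$-rows by powers of $a$. Once these normalizations are pinned down, the rest is formal and parallels \cite{duch} verbatim, which is why the author omits the calculation.
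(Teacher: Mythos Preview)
Your plan is exactly the paper's: it states that the factorization is ``a consequence of the Sylvester identity'' and that the argument ``proceeds in the same way as the proof of Lemma~3.6 in \cite{duch}'', which is precisely your template, and your index arithmetic $u_\F-u_{\F_{2,\{k_2\}}}=f_{k_2}^2-k_2+1$ is correct.

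Two small slips in your Sylvester bookkeeping are worth fixing before you carry it out. First, the two distinguished columns should be the first and the \emph{last} (those at $x$ and $x+k$), not the first and second; only then do the two $k\times k$ minors obtained by deleting the top row become (up to powers of $a$) $\Omega_\F(x+1)$ and $\Omega_\F(x)$. Second, the other two $k\times k$ minors, obtained by deleting the $f_{k_2}^2$-row, are themselves proportional to $m_{n'}^{\F_{2,\{k_2\}}}(x+1)$ and $m_{n'}^{\F_{2,\{k_2\}}}(x)$; the factor $\Omega_{\F_{2,\{k_2\}}}(x+1)$ you mention is the central $(k-1)\times(k-1)$ minor that multiplies $m_n^\F(x)$ on the other side of Sylvester's identity, not part of the four $k\times k$ minors. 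No column combinations via (\ref{sdm})--(\ref{sdm2}) are needed at this stage; Sylvester is applied directly to (\ref{defmex}). With these corrections the stray powers of $a$ cancel and the intertwining relation $m_n^\F=A_\F(m_{n'}^{\F_{2,\{k_2\}}})$ drops out; the remainder of your outline (direct composition of $A_\F B_\F$, $B_\F A_\F$, with the eigenfunction cross-check) is standard and fine.
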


Analogous factorization can be obtained by removing instead of $f_{k_2}^2$ any other element $f_i^2$ of $F_2$, $1\le i<k_2$.

\section{Exceptional Meixner polynomials}
Given  real numbers $a,c$, with $a\not = 0,1$ and $c\not =0,-1,-2,\ldots$, in the previous Section we have associated to each pair $\F =(F_1, F_2)$ of finite sets of positive integers the polynomials $m_n^{a,c;\F}$, $n\in \sigma_\F$, which are always eigenfunctions of a second order difference operator with rational coefficients.
We are interested in the cases when, in addition, those polynomials are orthogonal and complete with respect to a positive measure.

\begin{definition} The polynomials $m_n^{a,c;\F}$, $n\in \sigma_\F$, defined by (\ref{defmex}) are called exceptional Meixner polynomials, if they are orthogonal and complete with respect to a positive measure.
\end{definition}

As we point out in Section \ref{sectadm}, the key concept for the construction of exceptional Meixner polynomials is that of
admissibility (see the Definition \ref{dadm}). The admissibility of $c$ and $\F$ can also be characterized in terms of the measure $\rho_{a,c}^\F$ (\ref{mraf}) and the sign of the Casorati polynomial $\Omega _\F ^{a,c}$ in $\NN$.

\begin{lemma}\label{l3.1} Given  real numbers $a,c$, with $0<a<1$ and $c\not =0,-1,-2,\ldots$, and a pair $\F $ of finite sets of positive integers, the following conditions are equivalent.
\begin{enumerate}
\item The measure $\rho_{a,c}^\F$ (\ref{mraf}) is positive.
\item $c$ and $\F$ are admissible.
\item $\Gamma(n+c+k)\Omega_\F ^{a,c}(n)\Omega_\F ^{a,c}(n+1)>0$ for all nonnegative integer $n$, where the polynomial $\Omega_\F^{a,c}$ is defined by (\ref{defom}).
\end{enumerate}
\end{lemma}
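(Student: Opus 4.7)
I will prove $(1)\Leftrightarrow(2)$ by a direct inspection of the signs of the atoms of $\rho_{a,c}^\F$, and $(1)\Leftrightarrow(3)$ by identifying the product $\Gamma(n+c+k)\Omega_\F(n)\Omega_\F(n+1)$ with the norms of the dual polynomials $q_n^\F$ via the Christoffel formula (\ref{n2q}) and the identity (\ref{duomph}).

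\emph{Proof of $(1)\Leftrightarrow(2)$.} Since $a^y/y!>0$ for every $y\in\NN$, $\rho_{a,c}^\F$ is positive if and only if
\[
\prod_{f\in F_1}(y-f)\prod_{f\in F_2}(y+c+f)\,\Gamma(y+c)\ge 0,\qquad y\in\NN.
\]
Because $c\neq 0,-1,-2,\ldots$ we have $c+\hat c\in(0,1]$, so $y+c+\hat c>0$ and $\Gamma(y+c+\hat c)>0$ for every $y\in\NN$. The relation $\Gamma(y+c)=\Gamma(y+c+\hat c)/(y+c)_{\hat c}$ then converts the displayed inequality into the admissibility inequality (\ref{defadm}).

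\emph{Proof of $(1)\Leftrightarrow(3)$.} After the translation $x\mapsto x+u_\F$, the measure $\rho_{a,c}^\F$ is the Christoffel transform of $\rho_{a,c}$ by a polynomial with root set $F_1\cup\{-c-f:f\in F_2\}$. By the Meixner symmetry (\ref{sdm3}) the corresponding Christoffel determinant (\ref{defph}) equals $\Phi_n^\F$ (up to an overall sign), so Lemma~\ref{sze} and (\ref{n2q}), combined with the Meixner norm (\ref{norme}), yield
\[
\langle q_n^\F,q_n^\F\rangle=(-1)^k\,\frac{a^n\Gamma(n+c)}{(n+k)!(1-a)^{2n+c}}\,\Phi_n^\F\Phi_{n+1}^\F
\]
whenever $\Phi_n^\F\neq 0$ for all $n$. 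If (1) holds, all these norms are positive; conversely if (3) holds then $\Omega_\F(n)\neq 0$ so, through (\ref{duomph}), $\Phi_n^\F\neq 0$ and the norms are defined. Because the weights of $\rho_{a,c}^\F$ decay exponentially the Stieltjes moment problem is determinate, and positivity of the norms of the orthogonal basis $(q_n^\F)$ is equivalent to positivity of $\rho_{a,c}^\F$ itself: one recovers the individual weights as the limit of the Christoffel functions $1/\sum_{j\le N}q_j^\F(y)^2/\langle q_j^\F,q_j^\F\rangle$, which are positive whenever the norms are positive.

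\emph{The sign identity.} Writing $\Omega_\F(n)=C(n)\Phi_n^\F$ with $C(n)$ the nonzero explicit scalar from (\ref{duomph}), the identity $(1+c)_m(1+c)_{m+1}=((1+c)_m)^2(m+1+c)$ together with $\sign(a/(a-1))=\sign(a-1)=-1$ reduces the sign of $C(n)C(n+1)$ to a bookkeeping of the parities of the exponents of $a/(a-1)$ in $C(n)C(n+1)$ and of $a-1$ in $\xi_n\xi_{n+1}$, producing
\[
\sign\bigl(C(n)C(n+1)\bigr)=(-1)^k\prod_{i=0}^{k-1}\sign(n+c+i).
\]
Using $\Gamma(n+c+k)=\Gamma(n+c)\prod_{i=0}^{k-1}(n+c+i)$, the factors $\sign(n+c+i)$ then appear squared and cancel, so
\[
\sign\bigl(\Gamma(n+c+k)\Omega_\F(n)\Omega_\F(n+1)\bigr)=\sign\bigl((-1)^k\Gamma(n+c)\Phi_n^\F\Phi_{n+1}^\F\bigr),
\]
which closes the chain of equivalences. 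The main obstacle is precisely this sign bookkeeping, which must balance many Pochhammer products, powers of $a-1$ and $a/(a-1)$, and Gamma factors; a subsidiary point is the determinacy argument used to lift positivity of the $q_n^\F$ norms back to positivity of the individual atomic weights of $\rho_{a,c}^\F$.
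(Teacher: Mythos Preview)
Your proof is correct and follows essentially the same route as the paper: $(1)\Leftrightarrow(2)$ via the factorization $\Gamma(y+c)=\Gamma(y+c+\hat c)/(y+c)_{\hat c}$, and $(1)\Leftrightarrow(3)$ via the norm formula (\ref{n2q}) for $q_n^\F$ combined with the duality (\ref{duomph}) and a determinacy argument (the paper phrases $(3)\Rightarrow(1)$ through Favard's theorem plus moment-problem uniqueness rather than Christoffel-function limits, but this is the same underlying mechanism). One minor slip: your claim $c+\hat c\in(0,1]$ fails for $c>1$ (where $\hat c=0$), though the conclusion you actually need, $y+c+\hat c>0$ for $y\in\NN$, remains valid.
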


\begin{proof}
As in Section \ref{sectadm}, write $\hat c=\max \{-[c],0\}$. We then have
$$
\Gamma(x+c-u_\F)=\frac{\Gamma(x+c+\hat c-u_\F)}{(x+c)_{\hat c}}.
$$
Since $x+c+\hat c-u_\F> 0$, for $x\ge u_\F$, the equivalence between (1) and (2) is an easy consequence of the definitions of admissibility (\ref{defadm}) and of the measure $\rho _{a,c}^\F$.

We now prove the equivalence between (1) and (3).

(1) $\Rightarrow$ (3). Since the measure $\rho_{a,c}^\F$ is positive, the polynomials $(q_n^\F)_n$ (\ref{defqnme}) are orthogonal with respect to the measure $\rho_{a,c}^\F$ and have positive $L^2$-norm. According to (\ref{n2q}), we have
\begin{equation}\label{nssu}
\langle q_n^\F,q_n^\F\rangle =\frac{(-1)^k n!}{(n+k)!}\langle m_n^{a,c},m_n^{a,c}\rangle \Phi_n^\F\Phi_{n+1}^\F=
\frac{(-1)^k a^n\Gamma(n+c)}{(1-a)^{2n+c}(n+k)!}\Phi_n^\F\Phi_{n+1}^\F.
\end{equation}
We deduce then that $(-1)^k\Gamma(n+c)\Phi_n^\F\Phi_{n+1}^\F>0$ for all $n$. Using the duality (\ref{duomph}) and the definition of
$\xi_n$ in Lemma \ref{lem3.2}, we conclude that the sign of $(-1)^k\Gamma(n+c)\Phi_n^\F\Phi_{n+1}^\F$ is equal to the sign of $\Gamma(n+c+k)\Omega_\F^{a,c}(n)\Omega_\F^{a,c}(n+1)$. This proves (3).

(3) $\Rightarrow$ (1). Using Lemma \ref{sze}, the duality (\ref{duomph}), the definition of
$\xi_n$ in Lemma \ref{lem3.2} and proceeding as before, we conclude that the polynomials $(q_n^\F)_n$
are orthogonal with respect to $\rho_{a,c}^\F$ and have positive $L^2$-norm. This implies that there exists a positive measure $\mu$ with respect to which the polynomials $(q_n^\F)_n$ are orthogonal. Taking into account that the Fourier transform $H(z)$ of $\rho_{a,c}^\F$, defined by $H(z)=\int e^{-ixz}d\rho_{a,c}^\F(x)$, is an analytic function in the half plane $\Im z<-\log a$, and using moment problem standard techniques, it is not difficult to prove that $\mu$ has to be equal to $\rho _{a,c}^\F$. Hence the measure $\rho_{a,c}^\F$ is positive.
\end{proof}

According to the part 1 of Lemma \ref{ladm} and Lemma \ref{l3.1}, if $c$ and $\F$ are admissible, we have $c+k>0$ and $\Gamma (n+c+k)\Omega_\F^{a,c} (n)\Omega_\F ^{a,c}(n+1)>0$, for all $n\in \NN$. One can then deduce that if $c$ and $\F$ are admissible, then $\Omega_\F ^{a,c}(n)\Omega_\F^{a,c} (n+1)>0$, for all $n\in \NN$. We point out that the converse is  not true. Indeed, take $a=1/2$, $c=-7/2$, $F_1=\{1\}$, $F_2=\emptyset$.
A straightforward computation gives
$$
\Omega_\F^{a,c} (n)\Omega_\F^{a,c} (n+1)=\frac{(2n+7)(2n+9)}{4}>0,\quad n\in \NN .
$$
However, it is easy to see that $c$ and $\F$ are not admissible ((\ref{defadm}) is negative for $x=0,3$).

\bigskip

In the two following Theorems we prove that when  $c$ and $\F$ are admissible the polynomials $m_n^{a,c;\F}$, $n\in \sigma _\F$, are orthogonal and complete with respect to a positive measure.

\begin{theorem}\label{th4.4} Let $\F$ be a pair of finite sets of positive integers satisfying that $\Omega_\F^{a,c}(n)\not=0$ for all nonnegative integer $n$. Assume $-1<a<1$, $a\not =0$ and $c\not =0,-1,-2,\cdots $. Then the  polynomials $m_n^{a,c,\F}$, $n\in \sigma _\F$,
are orthogonal with respect to the (possibly signed) discrete measure
\begin{equation}\label{momex}
\omega_{a,c}^\F=\sum_{x=0}^\infty \frac{a^x\Gamma(x+c+k)}{x!\Omega_\F^{a,c}(x)\Omega_\F^{a,c}(x+1)}\delta_x.
\end{equation}
Moreover, for $-1<a<0$ the measure $\omega_{a,c}^{\F}$ is never positive, and for $0<a<1$ the measure $\omega_{a,c}^{\F}$ is positive if and only if $c$ and $\F$ are admissible.
\end{theorem}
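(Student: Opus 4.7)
The plan is to establish orthogonality by showing that $\omega_{a,c}^\F$ is a symmetry measure for the operator $D_\F$ of Theorem \ref{th3.3}, and then to read the positivity statement from the explicit form of the weight together with Lemma \ref{l3.1}.

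The core computation is the Pearson-type discrete symmetry relation
$$h_1(x)\, \omega_{a,c}^\F(\{x\}) = h_{-1}(x+1)\, \omega_{a,c}^\F(\{x+1\}), \qquad x\in\NN.$$
Substituting the explicit weight $\omega(x)=a^x\Gamma(x+c+k)/[x!\,\Omega_\F^{a,c}(x)\,\Omega_\F^{a,c}(x+1)]$ and the coefficients \eqref{jpm1}, \eqref{jpm3}, the factors $\Omega_\F^{a,c}(x)$, $\Omega_\F^{a,c}(x+1)$, $\Omega_\F^{a,c}(x+2)$ telescope on both sides, and the remaining identity reduces to $(x+c+k)\Gamma(x+c+k)=\Gamma(x+c+k+1)$ together with $(x+1)/(x+1)!=1/x!$. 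Moreover $h_{-1}(0)=0$ because of the factor $x$ in \eqref{jpm1}, so the boundary contribution at $x=0$ vanishes in any summation by parts. The decay of $|a|^x/x!$ against polynomial growth of the remaining factors makes every such sum converge absolutely, and a standard summation-by-parts yields
$$\sum_{x=0}^\infty D_\F(p)(x)\, q(x)\, \omega_{a,c}^\F(\{x\}) = \sum_{x=0}^\infty p(x)\, D_\F(q)(x)\, \omega_{a,c}^\F(\{x\})$$
for all polynomials $p,q$. Taking $p=m_n^{a,c;\F}$ and $q=m_{n'}^{a,c;\F}$ with distinct $n,n'\in\sigma_\F$ and using $D_\F(m_n^{a,c;\F})=n\, m_n^{a,c;\F}$ from Theorem \ref{th3.3}, one gets $(n-n')\langle m_n^{a,c;\F},m_{n'}^{a,c;\F}\rangle_{\omega_{a,c}^\F}=0$, which is the required orthogonality.

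For positivity I would split according to the sign of $a$. When $0<a<1$, the factor $a^x/x!$ is positive, so the weight at $x$ has the sign of $\Gamma(x+c+k)\,\Omega_\F^{a,c}(x)\,\Omega_\F^{a,c}(x+1)$; the equivalence of this being positive for all $x\in\NN$ with the admissibility of $c$ and $\F$ is exactly Lemma \ref{l3.1}. When $-1<a<0$, the factor $a^x$ alternates in sign. But for $x$ larger than the largest real root of $\Omega_\F^{a,c}$, the product $\Omega_\F^{a,c}(x)\,\Omega_\F^{a,c}(x+1)$ has constant sign (that of the square of the leading coefficient), and $\Gamma(x+c+k)$ also has constant sign for large $x$; so the sign of the weight must alternate for all sufficiently large $x$, and $\omega_{a,c}^\F$ cannot be positive.

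The main obstacle is the careful execution of the Pearson-type verification: one must combine \eqref{jpm1}--\eqref{jpm3} with the explicit weight so that the common factor $[(a-1)\Omega_\F^{a,c}(x+1)^2]^{-1}$ appears on both sides and the remaining shifts of $x$, $(x+c+k)$ and the gamma-factorial cancel. Once the Pearson relation is established, the orthogonality and the two positivity alternatives follow quickly from Theorem \ref{th3.3}, Lemma \ref{ladm} (guaranteeing $c+k>0$ under admissibility, so that $\Gamma(x+c+k)>0$ for $x\in\NN$), and Lemma \ref{l3.1}.
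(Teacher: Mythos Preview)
Your proposal is correct and follows essentially the same route as the paper: the paper verifies that $D_\F$ is symmetric with respect to $\omega_{a,c}^\F$ (delegating the Pearson-type check to Lemma~2.5 of \cite{duch}) and then concludes orthogonality from the distinct eigenvalues via Lemma~2.4 of \cite{duch}, while the positivity discussion for $0<a<1$ is exactly Lemma~\ref{l3.1} and the $-1<a<0$ case is the same eventual-sign argument you give. Your invocation of Lemma~\ref{ladm} at the end is harmless but redundant, since Lemma~\ref{l3.1} already delivers the full equivalence with admissibility.
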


\begin{proof}
Write $\Aa$ for the linear space generated by the polynomials $m_n^{a,c;\F}$, $n\in \sigma _\F$.
Using Lemma 2.5 of \cite{duch}, the definition of the measure $\omega_{a,c}^{\F}$ and the expressions for the difference coefficients of the operator $D_\F$ (see Theorem \ref{th3.3}), it is straightforward to check that $D_\F$  is symmetric with respect to the pair $(\omega_{a,c}^{\F},\Aa )$.
Since the polynomials $m_n^{a,c;\F}$, $n\in \sigma_\F$, are eigenfunctions
of $D_\F$ with different eigenvalues, Lemma 2.4 of \cite{duch} implies that they are orthogonal with respect to $\omega_{a,c}^{\F}$.

If $-1<a<0$ and the measure $\omega_{a,c}^{\F}$ is positive, since $\Gamma(n+c+k)$ is positive for $n$ big enough, we conclude that $\Omega_F^{a,c}(2n+1)\Omega_F^{a,c}(2n+2)<0$ for  $n$ big enough. But this would imply that $\Omega_\F^{a,c}$ has infinitely many real roots, which it is impossible since $\Omega_\F^{a,c}$ is a polynomial.

If $0<a<1$, according to Lemma \ref{l3.1}, $c$ and $\F$ are admissible if and only if $\Gamma(x+c+k)\Omega_F^{a,c}(x)\Omega_F^{a,c}(x+1)>0$ for all nonnegative integer $x$.

\end{proof}

\begin{theorem}\label{th4.5} Given  real numbers $a,c$, with $0<a<1$ and $c=0,-1,-2,\cdots $, and a pair $\F $ of finite sets of positive integers, assume that $c$ and $\F$ are admissible. Then the linear combinations of the  polynomials $m_n^{a,c;\F}$, $n\in \sigma _\F$, are dense in $L^2(\omega_{a,c}^{\F})$, where $\omega_{a,c}^{\F}$ is the positive measure (\ref{momex}). Hence $m_n^{a,c;\F}$, $n\in \sigma _\F$, are exceptional Meixner polynomials.
\end{theorem}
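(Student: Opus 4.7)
My plan is to use the duality from Lemma \ref{lem3.2} to transfer the density question from $L^2(\omega_{a,c}^\F)$ to $L^2(\rho_{a,c}^\F)$, where it reduces to the standard completeness of a classical orthogonal polynomial system in the $L^2$ space of its orthogonality measure.

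First, I would establish that $(q_n^\F)_{n\ge 0}$ is a complete orthogonal basis of $L^2(\rho_{a,c}^\F)$. By admissibility and Lemma \ref{l3.1}, $\rho_{a,c}^\F$ is a positive discrete measure on $\sigma_\F$, and $\Omega_\F^{a,c}(n)\ne 0$ for every $n\ge 0$. Its weights decay like $a^x$ with $0<a<1$, so all moments are finite and Carleman's condition is trivially satisfied; hence $\rho_{a,c}^\F$ is a determinate Hamburger moment problem and polynomials are dense in $L^2(\rho_{a,c}^\F)$. By the discussion after (\ref{mata00}) combined with the duality (\ref{duomph}), the condition $\Omega_\F^{a,c}(n)\ne 0$ implies $\deg q_n^\F=n$, while (\ref{nssu}) gives $\|q_n^\F\|^2_{\rho_{a,c}^\F}\ne 0$. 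Density of polynomials then yields completeness.

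Now suppose $g\in L^2(\omega_{a,c}^\F)$ is orthogonal to every $m_n^\F$, $n\in\sigma_\F$. Substituting the duality $m_n^\F(x)=q_x^\F(n)/(\kappa\xi_x\zeta_n)$ into $\langle g,m_n^\F\rangle_{\omega_{a,c}^\F}=0$ and using $\zeta_n\ne 0$ gives
\[
\Phi(n)\,:=\,\sum_{x=0}^\infty \phi(x)\,q_x^\F(n)\,=\,0,\qquad n\in\sigma_\F,
\]
where $\phi(x)=g(x)\omega_{a,c}^\F(\{x\})/(\kappa\xi_x)$. The goal is to interpret this series as the orthogonal expansion of a function $\Phi\in L^2(\rho_{a,c}^\F)$ in the basis $(q_x^\F)$, conclude $\Phi\equiv 0$ since it vanishes on $\supp\rho_{a,c}^\F=\sigma_\F$, and then recover $\phi\equiv 0$ (hence $g\equiv 0$) from the completeness established above.

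The main technical obstacle is verifying the identity
\[
\kappa^2\,\phi(x)^2\,\|q_x^\F\|_{\rho_{a,c}^\F}^2 \,=\, C\,g(x)^2\,\omega_{a,c}^\F(\{x\})
\]
for a constant $C$ independent of $x$, by combining (\ref{norme}), (\ref{nssu}), (\ref{duomph}) with the explicit expressions for $\kappa$, $\xi_x$, $\zeta_x$ in Lemma \ref{lem3.2}. Summing in $x$ realises the duality as an isometric isomorphism (up to a constant) between $L^2(\omega_{a,c}^\F)$ and $L^2(\rho_{a,c}^\F)$ exchanging the orthogonal systems $(m_n^\F)_{n\in\sigma_\F}$ and $(q_n^\F)_{n\ge 0}$; in particular $\Phi\in L^2(\rho_{a,c}^\F)$ with Fourier coefficients proportional to $\phi(x)$, so the argument of the previous paragraph applies. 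This bookkeeping step is analogous to the Charlier case treated in \cite{duch} but more intricate due to the extra parameter $c$ and to working with a pair $\F=(F_1,F_2)$ rather than a single finite set.
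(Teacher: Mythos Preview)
Your proposal is correct and follows essentially the same route as the paper's proof: both establish completeness of $(q_n^\F)_{n\ge 0}$ in $L^2(\rho_{a,c}^\F)$ via determinacy, and then transfer this to $L^2(\omega_{a,c}^\F)$ through the duality of Lemma~\ref{lem3.2} together with the norm identity you flag (the paper's (\ref{pf0})--(\ref{pf4})). The only cosmetic difference is that the paper packages the transfer as an explicit Parseval identity (invoking Theorem III.2.1 of \cite{At}), whereas you argue via triviality of the orthogonal complement; these are equivalent once the isometry constant is in hand.
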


\begin{proof}
Using Lemma \ref{l3.1} and taking into account that $c$ and $\F$ are admissible, it follows that the measure $\rho _{a,c}^{\F}$ (\ref{mraf}) is positive. We remark that this positive measure is also determinate (that is, there is not other measure with the same moments as those of $\rho _{a,c}^{\F}$). As we pointed out above, this can be proved using moment problem standard techniques (taking into account, for instance, that the Fourier transform of $\rho_{a,c}^\F$ is an analytic function in the half plane $\Im z<-\log a$). Since for determinate measures the polynomials are dense in the associated $L^2$ space, we deduce that the sequence $(q_n^\F/\Vert q_n^\F\Vert _2)_n$ (where $q_n^\F$ is the polynomial defined by (\ref{defqnme})) is an orthonormal basis in $L^2(\rho_{a,c}^\F)$.

For $s\in \sigma _\F$, consider the function $h_s(x)=\begin{cases} 1/\rho _{a,c}^{\F}(s),& x=s\\ 0,& x\not =s, \end{cases}$ where by $\rho _{a,c}^{\F}(s)$ we denote the mass  of the discrete measure $\rho_{a,c}^\F$ at the point $s$.
Since the support of $\rho _{a,c}^{\F}$ is $\sigma_\F$, we get that $h_s\in L^2(\rho_{a,c}^\F)$. Its Fourier coefficients with respect to the orthonormal basis $(q_n^\F/\Vert q_n^\F\Vert _2)_n$ are $q_n^\F(s)/\Vert q_n^\F\Vert _2$, $n\ge 0$. Hence
\begin{equation}\label{pf1}
\sum _{n=0}^\infty \frac{q_n^\F(s)q_n^\F(r)}{\Vert q_n^\F\Vert _2 ^2}=\langle f_s,f_r\rangle _{\rho_{a,c}^\F}=\frac{1}{\rho_{a,c}^\F(s)}\delta_{s,r}.
\end{equation}
This is the dual orthogonality associated to the orthogonality
$$
\sum_{u\in \sigma _\F}q_n^\F(u)q_m^\F(u)\rho _{a,c}^{\F}(u)=\langle q_n^\F,q_n^\F\rangle \delta _{n,m}
$$
of the polynomials $q_n^\F$, $n\ge 0$, with respect to the positive measure $\rho _{a,c}^{\F}$ (see, for instance, \cite{At}, Appendix III, or \cite{KLS}, Th. 3.8).

Using (\ref{nssu}), (\ref{norme}) and the duality (\ref{duomph}), we get
\begin{equation}\label{pf0}
\frac{1}{\langle q_n^\F,q_n^\F\rangle _{\rho_{a,c}^\F}}=\omega_{a,c}^{\F}(n)x_n,
\end{equation}
where $x_n$ is the positive number given by
\begin{equation}\label{pf2}
x_n=\left(\frac{a}{a-1}\right)^{2u_\F+2k+1}\frac{(-1)^k(1-a)^cn!(1+c)_{n+k-1}(1+c)_{n+k}}{\kappa ^2\Gamma(n+c)\Gamma(n+c+k)(n+k+1)!\xi_n\xi_{n+1}},
\end{equation}
and $\kappa$ and $\xi_n$ are defined in Lemma \ref{lem3.2}.

Using now the duality (\ref{duaqnrn}), we can rewrite (\ref{pf1}) for $s=r$ as
\begin{equation}\label{pf3}
\sum _{n=0}^\infty \omega_{a,c}^{\F}(n)(m_r^\F(n))^2\kappa^2x_n\xi_n^2\zeta_r^2=\frac{1}{\rho_{a,c}^\F(r)}.
\end{equation}
A straightforward computation using (\ref{pf2}) and the definitions of $\kappa$, $\xi_n$ and $\zeta_r$ in Lemma \ref{lem3.2} gives
\begin{equation}\label{pf4}
\kappa^2x_n\xi_n^2\zeta_r^2=\frac{(1-a)^{c+2r-2u_\F-k}}{a^{k_1-2k}(\rho_{a,c}^\F(r))^2}.
\end{equation}
Inserting it in (\ref{pf3}), we get
\begin{equation}\label{pf5}
\langle m_r^\F,m_r^\F\rangle_{\omega_{a,c}^{\F}}=\frac{a^{k_1-2k}}{(1-a)^{c+2r-2u_\F-k}}\rho_{a,c}^\F(r).
\end{equation}
Consider now a function $f$ in $L^2(\omega_{a,c}^{\F})$ and write $g(n)=f(n)/x_n^{1/2}$, where $x_n$ is the positive number given by (\ref{pf2}). Using (\ref{pf0}), we get
$$
\sum_{n=0}^\infty\frac{\vert g(n)\vert ^2}{\langle q_n^\F,q_n^\F\rangle _{\rho_{a,c}^\F}}=\sum_{n=0}^\infty \omega_{a,c}^{\F}(n)\vert f(n)\vert ^2=\Vert f\Vert _2^2<\infty.
$$
Define now
$$
v_r=\sum_{n=0}^\infty\frac{g(n)q_n^\F (r)}{\langle q_n^\F,q_n^\F\rangle _{\rho_{a,c}^\F}}.
$$
Using Theorem III.2.1 of \cite{At}, we get
\begin{equation}\label{pf6}
\Vert f\Vert _2^2=\sum_{n=0}^\infty\frac{\vert g(n)\vert ^2}{\langle q_n^\F,q_n^\F\rangle _{\rho_{a,c}^\F}}=\sum _{r\in \sigma _\F}\vert v_r\vert ^2\rho_{a,c}^\F (r).
\end{equation}
On the other hand, using the dualities (\ref{duomph}) and (\ref{duaqnrn}), and (\ref{pf2}) and (\ref{pf3}), we have
$$
v_r=\frac{1}{(\rho_{a,c}^\F(r))^{1/2}}\sum_{n=0}^\infty f(n)\frac{m_r^\F(n)}{\Vert m_r^\F\Vert _2}\omega_{a,c}^{\F}(n).
$$
This is saying that $(\rho_{a,c}^\F(r))^{1/2}v_r$, $r\in \sigma _\F$, are the Fourier coefficients of $f$ with respect to the orthonormal system
$(m_n^\F/\Vert m_n^\F\Vert_2)_n$. Hence, the identity (\ref{pf6}) is Parseval's identity for the function $f$. From where we deduce that the
orthonormal system $(m_n^\F/\Vert m_n^\F\Vert_2)_n$ is complete in $L^2(\omega_{a,c}^{\F})$.

\end{proof}

\section{Constructing polynomials which are eigenfunctions of second order differential operators}
One can construct exceptional Laguerre polynomials by taking limit in the exceptional Meixner polynomials. We use the basic limit
(\ref{blmel}).

Given a pair $\F=(F_1,F_2)$ of finite sets of positive integers, using the expression (\ref{defmexa}) for the polynomials $m_n^{a,c;\F}$, $n\in\sigma_\F$, setting $x\to x/(1-a)$ and $c=\alpha +1$ and taking limit as $a\to 1$, we get (up to normalization constants) the polynomials, $n\in \sigma _\F$,
\begin{equation}\label{deflax}
L_n^{\alpha ;\F}(x)= \left|
  \begin{array}{@{}c@{}cccc@{}c@{}}
    & L_{n-u_\F}^{\alpha}(x)&(L_{n-u_\F}^{\alpha})'(x)&\cdots &(L_{n-u_\F}^{\alpha})^{(k)}(x) & \\
    \dosfilas{ L_{f}^{\alpha}(x) & (L_{f}^{\alpha})'(x) &\cdots  & (L_{f}^{\alpha})^{(k)}(x) }{f\in F_1} \\
    \dosfilas{ L_{f}^{\alpha}(-x) & L_{f}^{\alpha+1}(-x) & \cdots & L_{f}^{\alpha +k}(-x) }{f\in F_2}
  \end{array}
  \right|.
\end{equation}

More precisely
\begin{equation}\label{lim1}
\lim_{a\to 1}(a-1)^{n-(k_1+1)k_2}m_n^{a,c;\F}\left(\frac{x}{1-a}\right)=(-1)^{\binom{k+1}{2}+\sum_{f\in F_2}f}L_n^{\alpha ;\F}(x)
\end{equation}
uniformly in compact sets.

Notice that $L_n^{\alpha ;\F}$ is a polynomial of degree $n$ with leading coefficient equal to
$$
(-1)^{n-u_\F+\sum_{f\in F_1}f} \frac{V_{F_1}V_{F_2}\prod_{f\in F_1}(f-n+u_F)}{(n-u_\F)!\prod_{f\in F_1}f!\prod_{f\in F_2}f!},
$$
where $V_F$ is the Vandermonde determinant defined by (\ref{defvdm}).

We introduce the associated polynomials
\begin{equation}\label{defhom}
\Omega _{\F}^{\alpha}(x)=\left|
  \begin{array}{@{}c@{}cccc@{}c@{}}
    \dosfilas{ L_{f}^{\alpha}(x) & (L_{f}^{\alpha})'(x) &\cdots  & (L_{f}^{\alpha})^{(k-1)}(x) }{f\in F_1} \\
    \dosfilas{ L_{f}^{\alpha}(-x) & L_{f}^{\alpha+1}(-x) & \cdots & L_{f}^{\alpha +k-1}(-x) }{f\in F_2}
  \end{array}
  \right| .
\end{equation}
Notice that $\Omega_{\F}^{\alpha}$ is a polynomials of degree $u_\F+k_1$. To simplify the notation we sometimes write $\Omega_\F=\Omega_\F^{\alpha }$.

When $F_1=\emptyset$, using (\ref{Lagab}), we have for $\Omega _\F^\alpha$ the identity
\begin{align}\label{defhomf2v}
\Omega _{\F}^{\alpha}(x)&=\left|
  \begin{array}{@{}c@{}cccc@{}c@{}}
    \dosfilas{ L_{f}^{\alpha}(-x) & -(L_{f}^{\alpha})'(-x) & \cdots & (-1)^{k-1}(L_{f}^{\alpha })^{(k-1)}(-x) }{f\in F_2}
  \end{array}
  \right| .
\end{align}
We also straightforwardly have
\begin{equation}\label{rromh}
L_{u_F}^{\alpha ;\F}(x)=(-1)^{\binom{s_\F}{2}+s_\F k_1}\Omega_{\F_\Downarrow }^{\alpha +s_\F}(x),
\end{equation}
where the positive integer $s_\F$ and the pair $\F_\Downarrow$ are defined by (\ref{defs0f}) and (\ref{deffd}), respectively.

We will need to know the value at $0$ of the polynomial $\Omega_\F^{\alpha }$.

\begin{lemma}\label{v0le}
Let $\F$ be a pair of finite sets of positive integers, then $\Omega_\F^{\alpha }(0)$ is a polynomial in $\alpha$ of degree $u_\F+k_1$ which does not vanish in $\RR \setminus \{-1,-2,\ldots \}$. Moreover
\begin{align}\label{v0l}
\Omega_\F^{\alpha }(0)&=(-1)^{\binom{k_1}{2}}
\frac{\prod_{j=1}^2 V_{F_j}\prod _{i=1}^{k_j}(\alpha +i)_{k_j-i+1}\prod _{f\in F_j}(\alpha +k_j+1)_{f-k_j}}{\prod _{f\in F_1}f!\prod _{f\in F_2}f!\prod _{i=1}^{\min\{k_1,k_2\}}(\alpha +i)_{k_1+k_2-2i+1}}\\\nonumber &\hspace{4cm}\times
\prod_{f\in F_1}\prod _{g\in F_2}(\alpha +f+g+1).
\end{align}
\end{lemma}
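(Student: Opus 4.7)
My approach is to evaluate $\Omega_\F^\alpha(0)$ explicitly by pulling common factors out of its rows and columns, thereby reducing it to a Vandermonde evaluation.

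The first step is to make each entry explicit at $x=0$. By $L_n^\alpha(0)=(\alpha+1)_n/n!$ together with (\ref{Lagder}), the $F_1$-row entries read $(L_f^\alpha)^{(j)}(0)=(-1)^j(\alpha+1)_f/[(f-j)!(\alpha+1)_j]$ and the $F_2$-row entries read $L_f^{\alpha+j}(0)=(\alpha+1)_{j+f}/[(\alpha+1)_j\,f!]$. I factor $(\alpha+1)_f/f!$ out of each row (in both blocks) and $1/(\alpha+1)_j$ out of each column $j=0,\ldots,k-1$. Using the elementary identities $(-1)^j f(f-1)\cdots(f-j+1)=(-f)_j$ and $(\alpha+1)_{j+f}/(\alpha+1)_f=(\alpha+f+1)_j$, the residual matrix $N$ simplifies to $N_{f,j}=(-f)_j$ for $f\in F_1$ and $N_{f,j}=(\alpha+f+1)_j$ for $f\in F_2$. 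Setting $a_f=-f$ on $F_1$ and $a_f=\alpha+f+1$ on $F_2$, this is $N_{f,j}=(a_f)_j$; since $(a)_j$ is monic of degree $j$ in $a$, column operations reduce $N$ to the standard Vandermonde in the $a_f$, so $\det N=\prod_{i<l}(a_l-a_i)$. Splitting this product by type of index pair gives $(-1)^{\binom{k_1}{2}}V_{F_1}$ from the $F_1\times F_1$ pairs (because $a_l-a_i=f_i^1-f_l^1$ is the reversed $F_1$-Vandermonde), $V_{F_2}$ from the $F_2\times F_2$ pairs, and $\prod_{f\in F_1,g\in F_2}(\alpha+f+g+1)$ from the mixed pairs.

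Reinstating the row and column factors,
\[
\Omega_\F^\alpha(0)=(-1)^{\binom{k_1}{2}}\,\frac{V_{F_1}V_{F_2}\prod_{f\in F_1,g\in F_2}(\alpha+f+g+1)}{\prod_{f\in F_1}f!\prod_{g\in F_2}g!}\cdot\frac{\prod_{f\in F_1}(\alpha+1)_f\prod_{g\in F_2}(\alpha+1)_g}{\prod_{j=0}^{k-1}(\alpha+1)_j}.
\]
To reach the form (\ref{v0l}) it remains to verify the purely algebraic identity
\[
\frac{\prod_{f\in F_1}(\alpha+1)_f\prod_{g\in F_2}(\alpha+1)_g}{\prod_{j=0}^{k-1}(\alpha+1)_j}=\frac{\prod_{j=1}^2\bigl(\prod_{i=1}^{k_j}(\alpha+i)_{k_j-i+1}\prod_{f\in F_j}(\alpha+k_j+1)_{f-k_j}\bigr)}{\prod_{i=1}^{\min\{k_1,k_2\}}(\alpha+i)_{k_1+k_2-2i+1}},
\]
which I would check by comparing the exponent of each linear factor $(\alpha+m)$ on both sides: assuming WLOG $k_1\le k_2$ and splitting $m$ into the four ranges $m\le k_1$, $k_1<m\le k_2$, $k_2<m<k$, $m\ge k$, both sides collapse to the common value $|\{f\in F_1:f\ge m\}|+|\{g\in F_2:g\ge m\}|-\max(k-m,0)$. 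Once (\ref{v0l}) is established the polynomiality, the degree $u_\F+k_1$, and the non-vanishing outside $\{-1,-2,\ldots\}$ all follow by inspection, since every factor on its right-hand side is a Pochhammer (or ratio of Gammas) whose zeros and poles lie at non-positive integers, and a routine degree count matches $u_\F+k_1$. I expect this bookkeeping identity to be the only real obstacle; everything preceding it is a formal row/column reduction together with the classical Vandermonde evaluation.
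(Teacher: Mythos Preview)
Your argument is correct and is precisely the ``careful computation using $L_n^\alpha(0)=(\alpha+1)_n/n!$ and standard determinant techniques'' that the paper's proof sketch alludes to: factoring row/column prefactors, reducing the residual $(a_f)_j$-matrix to a Vandermonde, and then splitting the product by index type. Your exponent-of-$(\alpha+m)$ bookkeeping for the remaining Pochhammer identity is also right (the four ranges you list give the common value on both sides), so the proposal matches the paper's approach and fills in its omitted details.
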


\begin{proof}
The proof of (\ref{v0l}) follows by a carefully computation using that $L_n^\alpha (0)=\frac{(1+\alpha)_n}{n!}$ and standard determinant techniques.
Because of the value above of the Laguerre polynomials at $0$, $\Omega_\F^{\alpha }(0)$ is clearly a polynomial in $\alpha $; one can also see that the right hand side of (\ref{v0l}) is a polynomial because each factor of the form $\alpha +s$ in the denominator cancels with one in the numerator. It is now easy to see that the right hand side of (\ref{v0l}) only vanishes in some negative integers.

\end{proof}

Passing again to the limit, we can transform the second order difference operator (\ref{sodomex}) in a second order differential operator with respect to which the polynomials $L_n^{\alpha ;F}$, $n\in\sigma_\F$, are eigenfunctions.

\begin{theorem}\label{th5.1} Given a real number $\alpha \not =-1,-2,\cdots $ and a pair $\F$ of finite sets of positive integers, the polynomials $L_n^{\alpha;\F}$, $n\in \sigma _\F$,
are common eigenfunctions of the second order differential operator
\begin{equation}\label{sodolax}
D_F=x\partial ^2+h_1(x)\partial+h_0(x),
\end{equation}
where $\partial=d/dx$ and
\begin{align}\label{jph1}
h_1(x)&=\alpha +k+1-x-2x\frac{\Omega_\F'(x)}{\Omega_\F(x)},\\\label{jph2}
h_0(x)&=-k_1-u_\F +(x-\alpha -k)\frac{\Omega_\F'(x)}{\Omega_\F(x)}+x\frac{\Omega_\F''(x)}{\Omega_\F(x)}.
\end{align}
More precisely $D_\F(L_n^\F)=-nL_n^\F(x)$.
\end{theorem}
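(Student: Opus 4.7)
The approach is to derive Theorem \ref{th5.1} as the limit $a\to 1$ of Theorem \ref{th3.3}, following the transition from Meixner to Laguerre polynomials in (\ref{blmel}) and (\ref{lim1}). Setting $c=\alpha+1$ and $y=x/(1-a)$, the scaled Meixner eigenfunctions $(a-1)^{n-(k_1+1)k_2}m_n^{a,c;\F}(y)$ converge (up to sign) to $L_n^{\alpha;\F}(x)$, and the shifts $\Sh_{\pm 1}$ acting on $y$ correspond to translations by $\pm(1-a)$ in $x$, with Taylor expansion $I\pm(1-a)\partial+\tfrac{(1-a)^2}{2}\partial^2\pm\cdots$.

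First I would determine the asymptotic behavior of the rational coefficients $h_{-1},h_0,h_1$ in Theorem \ref{th3.3}. Factoring $(1-a)^{-f}$ (with appropriate sign) from row $f$ and $(1-a)^j$ from column $j$ of the determinants (\ref{defoma}) and (\ref{deflam}), and using (\ref{blmel}) together with the identity $(L_f^\alpha)^{(j)}=(-1)^j L_{f-j}^{\alpha+j}$ (iterated from (\ref{Lagder})) and the symmetry (\ref{sdm3}), one obtains a common scaling constant $C_a$ for which $\Omega_\F^{a,c}(y)/C_a\to\Omega_\F^\alpha(x)$, while $\Lambda_\F^{a,c}(y)/C_a$ carries one extra factor of $(1-a)$ relative to the analogous Laguerre Wronskian $\tilde\Lambda^\alpha$ (the determinant with columns indexed by derivative orders $0,1,\dots,k-2,k$). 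Hence the ratio $\Lambda_\F/\Omega_\F$ appearing inside $h_0$ is $O(1-a)$ after rescaling.

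Next, substituting $y=x/(1-a)$ into the Meixner eigenvalue equation and collecting terms by order in $(1-a)$ yields
\[
(h_{-1}+h_0+h_1)G+(1-a)(h_1-h_{-1})G'+\tfrac{(1-a)^2}{2}(h_1+h_{-1})G''+O((1-a)^3)=nG,
\]
with $G(x)=m_n^{a,c;\F}(x/(1-a))$. The dominant $(1-a)^{-2}$ singularities in $h_{\pm 1}$ (from the factor $y/(a-1)=-x/(1-a)^2$) combine with the matching singularities in $h_0$ to yield the finite limits $\tfrac{(1-a)^2}{2}(h_1+h_{-1})\to -x$ and $(1-a)(h_1-h_{-1})\to x-(\alpha+k+1)+2x\Omega_\F'/\Omega_\F$, which coincide with the required $-x$ and $-h_1^\F(x)$.

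The main obstacle is showing that $h_{-1}+h_0+h_1$ has the finite limit $-h_0^\F(x)$. After cancelling the $(1-a)^{-2}$ singularities, a residual $k/(1-a)$ term remains in the explicit part of $h_0$; it must be absorbed by the $\Delta$-contribution $\Delta[a(y+c+k-1)\Lambda_\F/((a-1)\Omega_\F)]$. Using the asymptotics above, the bracketed quantity equals $-ax\tilde\Lambda^\alpha/((1-a)\Omega_\F^\alpha)+O(1)$, and since $\Delta$ acts as $(1-a)\partial_x+\tfrac{(1-a)^2}{2}\partial_x^2+\cdots$, it produces both a $1/(1-a)$ term cancelling $k/(1-a)$ and a finite part involving $\tilde\Lambda^\alpha/\Omega_\F^\alpha$ and its derivative. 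A determinantal identity obtained by differentiating $\Omega_\F^\alpha$ column-by-column, and using (\ref{Lagab}) to reconcile the discrepancy between $(L_f^{\alpha+k-1}(-x))'$ and $L_f^{\alpha+k}(-x)$ in the $F_2$ rows, expresses $\tilde\Lambda^\alpha$ as a linear combination of $(\Omega_\F^\alpha)'$ and $\Omega_\F^\alpha$, so that after substitution the result matches exactly the formula (\ref{jph2}) for $h_0^\F$. Passing to the limit of eigenfunctions and multiplying through by $-1$ then yields $D_\F L_n^{\alpha;\F}=-n L_n^{\alpha;\F}$.
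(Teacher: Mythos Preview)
Your proposal is correct and follows essentially the same approach as the paper: obtain Theorem \ref{th5.1} by taking the limit $a\to 1$ (with $c=\alpha+1$, $y=x/(1-a)$) in the Meixner eigenvalue equation of Theorem \ref{th3.3}, using (\ref{blmel}) and (\ref{lim1}). The paper's (omitted) proof organizes the passage to the limit slightly differently, recording as its key ingredients the limits of $\Omega_\F^{a,c}$ and of its first and second finite differences toward $\Omega_\F^\alpha$, $(\Omega_\F^\alpha)'$, and $(\Omega_\F^\alpha)''$ (displayed as (\ref{lim2}), (\ref{lim3}) and the following line), rather than tracking $\Lambda_\F^{a,c}$ separately and invoking a determinantal identity for $\tilde\Lambda^\alpha$; but this is only a difference in bookkeeping within the same limiting argument.
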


\begin{proof}
We omit the proof because proceeds as that of Theorem 5.1 in \cite{duch} and it is a matter of calculation using carefully the basic limits (\ref{blmel}) and its consequences
\begin{align}\label{lim2}
\lim _{a\to 1^-}(1-a)^{\beta_\F}\Omega _\F^{a,c}(x_a)&=(-1)^{\epsilon _\F}\Omega _\F^\alpha (x),\\\label{lim3}
\lim _{a\to 1^-}(1-a)^{\beta_\F-1}(\Omega _\F^{a,c}(x_a+1)-\Omega _\F^{a,c}(x_a))&=(-1)^{\epsilon _\F}(\Omega _\F^\alpha )' (x),\\\nonumber
\lim _{a\to 1^-}(1-a)^{\beta_\F-2}(\Omega _\F^{a,c}(x_a+1)-2\Omega _\F^{a,c}(x_a)+\Omega _\F^{a,c}(x_a-1))&=(-1)^{\epsilon _\F}(\Omega _F^\alpha )'' (x).
\end{align}
where $c=\alpha +1$, $\beta_\F=u_\F+k_1(1-k_2)$, $x_a=x/(1-a)$ and $\epsilon _\F=\sum_{f\in F_1}f$.

\end{proof}

To prove the completeness of the exceptional Laguerre polynomials in the associated $L^2$ space, we will need the following characterization of the linear space generated by $L_n^{\alpha;\F}$, $n\in \sigma _\F$.

\begin{lemma}\label{cpap}
Given a real number $\alpha \not =-1,-2,\ldots $ and a pair $\F$ of finite sets of positive integers, consider the linear space $\Aa$ generated by the polynomials $L_n^{\alpha;\F}$, $n\in \sigma _\F$. Then $p\in \Aa $ if and only if
\begin{equation}\label{lum}
(-2xp'+(x-\alpha -k)p)\Omega _\F '+xp\Omega _\F ''
\end{equation}
is divisible by $\Omega _\F $.
\end{lemma}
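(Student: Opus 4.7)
The plan is to rewrite the divisibility condition as polynomiality of $D_\F(p)$, and then establish $\Aa = \mathcal{W}$ by a dimension count, where $\mathcal{W} := \{p \in \RR[x] : D_\F(p) \in \RR[x]\}$. Direct manipulation using (\ref{jph1})--(\ref{jph2}) gives
\begin{equation*}
\Omega_\F\cdot D_\F(p) \;=\; \Omega_\F\bigl(xp''+(\alpha+k+1-x)p'-(u_\F+k_1)p\bigr) + \bigl[(-2xp'+(x-\alpha-k)p)\Omega_\F'+xp\Omega_\F''\bigr],
\end{equation*}
so the bracketed expression is divisible by $\Omega_\F$ exactly when $D_\F(p)$ is a polynomial. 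The forward inclusion $\Aa \subseteq \mathcal{W}$ is immediate from Theorem \ref{th5.1}: if $p = \sum_{n\in\sigma_\F} c_n L_n^{\alpha;\F}$, then $D_\F(p) = -\sum_n n c_n L_n^{\alpha;\F} \in \Aa$, hence a polynomial.

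For the reverse inclusion I would filter both spaces by degree, $\Aa_n := \Aa \cap \RR_{\le n}[x]$ and $\mathcal{W}_n := \mathcal{W} \cap \RR_{\le n}[x]$. Since each $L_m^{\alpha;\F}$ has exact degree $m$, $\dim \Aa_n = |\sigma_\F \cap [0,n]|$, which for $n \ge u_\F + M_{F_1}$ simplifies to $(n+1) - (u_\F+k_1)$ because every integer above that threshold lies in $\sigma_\F$. A leading-coefficient computation shows that $D_\F$ sends a polynomial of degree $m$ to another polynomial of the same degree with leading coefficient multiplied by $-m$, so $\dim \mathcal{W}_n - \dim \mathcal{W}_{n-1} \le 1$. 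For $n > u_\F + M_{F_1}$ one has $n \in \sigma_\F$, and $L_n^{\alpha;\F}$ supplies an element of $\mathcal{W}_n \setminus \mathcal{W}_{n-1}$, forcing the increment to equal $1$. Summing these increments reduces the problem to the base identity $\mathcal{W}_{u_\F + M_{F_1}} = \Aa_{u_\F + M_{F_1}}$; once that is secured, downward induction via $\mathcal{W}_{n-1} \subseteq \mathcal{W}_n \cap \RR_{\le n-1}[x] = \Aa_n \cap \RR_{\le n-1}[x] = \Aa_{n-1}$ propagates the equality to all smaller $n$.

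The main obstacle is this base case, equivalently ruling out any polynomial of degree exactly $n = u_\F + f$ with $f \in F_1$ lying in $\mathcal{W}$ (for $n < u_\F$ the argument is parallel and strictly easier). I would handle this via the regular-singular-point analysis of $D_\F$ at $x=0$: since $\Omega_\F(0) \ne 0$ by Lemma \ref{v0le}, the indicial equation there is $\sigma(\sigma-1) + (\alpha+k+1)\sigma = 0$ with roots $0$ and $-(\alpha+k)$, so at any eigenvalue $-n$ the space of locally analytic solutions is one-dimensional. Any hypothetical new element of $\mathcal{W}_n \setminus \mathcal{W}_{n-1}$ would, after subtracting the appropriate combination of elements of $\Aa_{n-1}$ to clear lower-degree contributions, yield a polynomial eigenfunction of $D_\F$ at $-n$ of degree exactly $n$; but the determinantal identity (\ref{deflax}) shows that the unique analytic candidate vanishes for $n \notin \sigma_\F$ (the top row either is identically zero when $n < u_\F$, or duplicates the $F_1$-row indexed by $f$ when $n = u_\F + f$). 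Hence no such new element can exist, closing the base case and thereby the lemma.
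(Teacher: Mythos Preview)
Your reformulation (divisibility $\Leftrightarrow$ $D_\F(p)$ is polynomial) and the inclusion $\Aa\subset\mathcal W$ match the paper exactly. The trouble is entirely in your argument for the reverse inclusion.

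The step ``subtract elements of $\Aa_{n-1}$ to clear lower-degree contributions and obtain a polynomial eigenfunction at $-n$'' is not justified. If $p\in\mathcal W_n$ has degree $n$, you can certainly write $D_\F(p)=-np+r$ with $\deg r<n$, but to find $q\in\Aa_{n-1}$ with $D_\F(q)+nq=r$ you need $r\in\Aa_{n-1}$ (on $\Aa_{n-1}$ the operator $D_\F+n$ is bijective, fine). That forces $r=D_\F(p)+np\in\mathcal W_{n-1}$, i.e.\ you need $D_\F(\mathcal W)\subset\mathcal W$. You never establish this, and it is essentially equivalent to the lemma itself (once $\Aa=\mathcal W$ is known, $\mathcal W$ is $D_\F$-stable because $\Aa$ is, but not before). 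Your closing argument also misfires: the Frobenius solution at $x=0$ for the eigenvalue $-n$ is \emph{never} identically zero; the vanishing of the determinant (\ref{deflax}) for $n\notin\sigma_\F$ only says that \emph{that particular formula} returns $0$, not that there is no polynomial eigenfunction of degree $n$.

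The paper sidesteps all of this with a one-line codimension count. The gap set $S=\NN\setminus\sigma_\F$ has $u_\F+k_1=\deg\Omega_\F$ elements, so $\Aa$ has a complement of dimension $\deg\Omega_\F$ in $\PP$ (take $\langle x^j:j\in S\rangle$). On the other hand, ``$\Omega_\F$ divides (\ref{lum})'' imposes at most $\deg\Omega_\F$ linear conditions on $p$, so $\mathcal W$ has a complement of dimension at most $\deg\Omega_\F$. Since $\Aa\subset\mathcal W$, equality of the codimensions forces $\Aa=\mathcal W$. This avoids any eigenfunction or Frobenius analysis.
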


\begin{proof}
Write $\Bb =\{ p\in \PP : D_\F(p)\in \PP\}$. From the definition of the second order differential operator $D_\F $ (\ref{sodolax}), one easily sees that
$p\in \Bb $ is and only if the polynomial (\ref{lum}) is divisible by $\Omega_\F$.

Since each polynomial $L_n^{\alpha; \F}$, $n\in \sigma _\F$, is an eigenfunction for $D_\F$, we get that $L_n^{\alpha; \F}\in \Bb$ and hence $\Aa\subset \Bb$.

Consider the set of nonnegative integer $S=\NN \setminus \sigma_{\F}$. The definition of $\sigma_\F$ (\ref{defsf}) shows that
$S$ is finite and $s=\vert S\vert = u_\F+k_1=\deg(\Omega _\F)$.
We can  write $\PP =\Aa \oplus H_1$, where $H_1=\langle x^j: j\in S \rangle$. Hence $\dim H_1=\deg(\Omega _\F)$.

On the other hand, observe that the divisibility of (\ref{lum}) by $\Omega _\F$ imposes $\deg(\Omega _\F)$
linearly independent homogeneous conditions on the coefficients of $p$. We can then construct linearly independent polynomials $q_j$, $j=1,\ldots , \deg(\Omega _\F)$, such that  $\PP=\Bb\oplus H_2$, where $H_2=\langle q_j: j=1,\ldots , \deg(\Omega _\F)\rangle$. Hence $\dim H_2=\deg(\Omega _\F)$.
Since $\Aa\subset \Bb$ and $\dim H_1=\dim H_2$, we get that actually $\Aa=\Bb$.

\end{proof}

Again passing to the limit, from the factorization in Lemma \ref{lfe} we can factorize the second order differential operator $D_\F$ as product of two first order differential operators.
This can be done by choosing one of the components of $\F=(F_1,F_2)$ and removing one element in the chosen component. An iteration  shows that the system $(D_\F, (L_n^{\alpha; \F})_{n\in \sigma_\F})$ can be constructed by applying a sequence of $k$ Darboux transforms to the Laguerre system (see the Definition \ref{dxt})). We display the details in the following lemma, where we remove one element of the component $F_2$, and hence we have to assume $F_2\not =\emptyset$. A similar result can be proved by removing one element of the component $F_1$.

\begin{lemma}\label{lfel} Let $\F=(F_1,F_2)$ be a pair of finite sets of positive integers and assume $F_2\not =\emptyset$.  We define the first order differential operators $A_\F$ and $B_\F$ as
\begin{align}
A_\F&=-\frac{\Omega _\F(x)}{\Omega_{\F_{2,\{ k_2\}}}(x)}\partial+\frac{\Omega _\F'(x)+\Omega _\F(x)}{\Omega_{\F_{2, \{ k_2\}}}(x)},\\
B_\F&=\frac{-x\Omega _{\F_{2,\{ k_2\}}}(x)}{\Omega_{\F}(x)}\partial+\frac{x\Omega '_{\F_{2,\{ k_2\}}}(x)-(\alpha+k)\Omega_{\F_{2,\{ k_2\}}}(x)}{\Omega_{\F}(x)},
\end{align}
where $k_2$ is the number of elements of $F_2$ and the pair $\F_{2,\{ k_2\} }$ is defined by (\ref{deff2}).
Then $L_n^{\alpha ;\F}(x)=A_\F(L_{n-f_{k_2}^2+k_2-1}^{\alpha; \F_{2,\{ k_2\}}})(x)$, $n\in \sigma_F$. Moreover
\begin{align*}
D_{\F_{2,\{ k_2\}}}&=B_\F A_\F+(\alpha+f_{k_2}^2-u_{\F_{2,\{ k_2\}}}+1)Id,\\
D_{\F}&=A_\F B_\F+(\alpha +f_{k_2}^2-u_\F+1)Id.
\end{align*}
\end{lemma}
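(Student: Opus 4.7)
The plan is to derive this Laguerre factorization by passing to the limit in the analogous Meixner factorization (Lemma \ref{lfe}), using the scaling $x \mapsto x/(1-a)$, $c = \alpha+1$, $a \to 1^-$, and the limits (\ref{blmel}), (\ref{lim1}), (\ref{lim2}) and (\ref{lim3}) that were already used in the proof of Theorem \ref{th5.1}. This mirrors exactly the strategy by which the differential operator $D_\F$ and the polynomials $L_n^{\alpha;\F}$ were themselves obtained as limits of the Meixner objects, so it is the natural route. A direct verification is also possible (Sylvester's identity for the polynomial identity, followed by a coefficient comparison against the expressions (\ref{jph1})--(\ref{jph2}) for $D_\F$), but the limiting approach gives all three assertions simultaneously and fits the spirit of the paper, where $A_\F$, $B_\F$, $D_\F$ and $L_n^{\alpha;\F}$ all arise together as $a\to 1$.

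First I would rewrite the Meixner operators $A_\F^{\mathrm{Mx}}$ and $B_\F^{\mathrm{Mx}}$ from Lemma \ref{lfe} in terms of $\Sh_0$ and the forward difference $\Delta = \Sh_1 - \Sh_0$ (and similarly the backward difference for $B_\F^{\mathrm{Mx}}$), so that after the substitution $x\mapsto x/(1-a)$ the difference operators carry an explicit power of $(1-a)$ that compensates the scaling $(1-a)^{\beta_\F}$ appearing in (\ref{lim2})--(\ref{lim3}) for $\Omega_\F^{a,c}$ and $\Omega_{\F_{2,\{k_2\}}}^{a,c+1}$. Concretely, $\Delta$ contributes a factor $(1-a)\partial$ in the limit while $\Sh_0 \to \mathrm{Id}$, and the rational coefficients built from the Casorati $\Omega$'s converge to the corresponding coefficients built from the Wronskian $\Omega_\F^{\alpha}$ in (\ref{defhom}). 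Matching the total powers of $(1-a)$ on both sides of the Meixner identities in Lemma \ref{lfe}, I expect the Meixner $A_\F^{\mathrm{Mx}}$ (resp.\ $B_\F^{\mathrm{Mx}}$), after multiplication by a suitable power of $(a-1)$, to converge pointwise on polynomials to the stated $A_\F$ (resp.\ $B_\F$) of the lemma.

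Once these convergences are in hand, the limit of $m_n^{\F}(x) = A_\F^{\mathrm{Mx}}(m_{n-f_{k_2}^2+k_2-1}^{\F_{2,\{k_2\}}})(x)$, multiplied by the appropriate power of $(a-1)$ dictated by (\ref{lim1}), yields the desired $L_n^{\alpha;\F} = A_\F(L_{n-f_{k_2}^2+k_2-1}^{\alpha;\F_{2,\{k_2\}}})$. Similarly, the two Meixner factorizations $D_{\F_{2,\{k_2\}}} = B_\F^{\mathrm{Mx}}A_\F^{\mathrm{Mx}} - (c+f_{k_2}^2-u_{\F_{2,\{k_2\}}})\mathrm{Id}$ and $D_\F = A_\F^{\mathrm{Mx}}B_\F^{\mathrm{Mx}} - (c+f_{k_2}^2-u_\F)\mathrm{Id}$ pass to the limit. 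The Meixner operator $D_\F$ acts on $p(x/(1-a))$ by a second order difference, which after renormalization (again tracked via the $\Delta$-rewrite) tends to the differential operator $D_\F$ of (\ref{sodolax}) acting on $p(x)$. The constant $c+f_{k_2}^2-u_\F$ with $c=\alpha+1$ becomes $\alpha+f_{k_2}^2-u_\F+1$, but with a sign flip inherited from the relation $D_\F(L_n^{\alpha;\F}) = -nL_n^{\alpha;\F}$ (versus $D_\F(m_n^\F)=nm_n^\F$ for Meixner), producing the $+$ sign in the stated factorizations.

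The main obstacle is the bookkeeping of the normalizing powers of $(1-a)$: one must choose, for each of the four objects (the two operators, the polynomial identity, and the operator identity), the correct rescaling so that all three limits are simultaneously finite and nonzero, and so that the limiting constants land exactly as stated. Because the degree of $\Omega_{\F_{2,\{k_2\}}}^{a,c}$ equals $u_{\F_{2,\{k_2\}}}+k_1 = u_\F - f_{k_2}^2 + k_2 - 1 + k_1$, the difference of scaling exponents $\beta_\F - \beta_{\F_{2,\{k_2\}}}$ computes the index shift $f_{k_2}^2 - k_2 + 1$ appearing in the statement, so this accounting is consistent; verifying this consistency and extracting the eigenvalue shift $\alpha+f_{k_2}^2-u_\F+1$ is the one place where care is required. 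Once the limits are set up correctly, each of the three assertions is an immediate consequence of Lemma \ref{lfe}.
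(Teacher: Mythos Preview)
Your proposal is correct and follows exactly the paper's approach: the paper's entire proof is the single sentence ``The Lemma can be proved applying limits in Lemma~\ref{lfe},'' and you have simply fleshed out how that limiting procedure goes (rewriting the shifts via $\Delta$, tracking the powers of $(1-a)$, and noting the sign flip in the eigenvalue). There is nothing to add.
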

\begin{proof}
The Lemma can be proved applying limits in Lemma \ref{lfe}.

\end{proof}

\section{Exceptional Laguerre polynomials}
In the previous Section, given a real number $\alpha\not=-1,-2,\cdots $, we have associated to each pair $\F$ of finite sets of positive integers the polynomials $L_n^{\alpha ;\F}$, $n\in \sigma_\F$,
which are always eigenfunctions of a second order differential operator with rational coefficients.
We are interested in the cases when, in addition, those polynomials are orthogonal and complete with respect to a positive measure.

\begin{definition} The polynomials $L_n^{\alpha ;F}$, $n\in \sigma_\F$, defined by (\ref{deflax}) are called exceptional Laguerre polynomials, if they are orthogonal and complete with respect to a positive measure.
\end{definition}

The following Lemma and Theorem show that the admissibility of $\alpha +1$ and $\F$ will be the key to construct exceptional Laguerre polynomials.

\begin{lemma}\label{lem6.2} Given a real number $\alpha \not =-1,-2,\cdots $ and a pair $\F$ of finite sets of positive integers, if $\alpha +1$ and $\F$ are admissible then $\alpha +k>-1$ and $\Omega_\F ^\alpha $ (\ref{defhom}) does not vanish in $[0,+\infty )$.
\end{lemma}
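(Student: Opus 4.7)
The first assertion is immediate: since admissibility of $\alpha+1$ and $\F$ is a hypothesis of Lemma \ref{ladm}, applying part (1) of that lemma with $c=\alpha+1$ yields $c+k=\alpha+1+k>0$, i.e. $\alpha+k>-1$.

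For the non-vanishing of $\Omega_\F^{\alpha}$ on $[0,+\infty)$, the plan is to transfer the known positivity statement on integers for the Meixner Casoratian (Lemma \ref{l3.1}) to a positivity statement for the Laguerre Wronskian, via the uniform limit (\ref{lim2}). Concretely, fix any $a\in (0,1)$. Since admissibility of $c=\alpha+1$ and $\F$ does not depend on $a$, Lemma \ref{l3.1} gives $\Gamma(n+\alpha+1+k)\,\Omega_\F^{a,\alpha+1}(n)\,\Omega_\F^{a,\alpha+1}(n+1)>0$ for every $n\in\NN$. Using $\alpha+k>-1$ from the first part, the Gamma factor is positive, so $\Omega_\F^{a,\alpha+1}(n)$ is nonzero and of constant sign on $\NN$. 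That constant sign must coincide with the sign of the leading coefficient of $\Omega_\F^{a,\alpha+1}$ (taking $n$ large), which by the formula displayed after (\ref{deflam}) is $\sigma=\sign\!\bigl(V_{F_1}V_{F_2}(-1)^{k_1k_2}\bigr)$, and is therefore independent of $a$.

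Next I would use the limit (\ref{lim2}) to push this to $\Omega_\F^{\alpha}$. For any $x_0>0$ pick integers $n_j$ and $a_j\to 1^-$ with $n_j(1-a_j)\to x_0$ (for instance $a_j=1-x_0/j$, which makes $n_j=j$ exact). The uniform convergence on compacts of $(1-a)^{\beta_\F}\Omega_\F^{a,\alpha+1}(x/(1-a))$ to $(-1)^{\epsilon_\F}\Omega_\F^{\alpha}(x)$ gives
\[
(1-a_j)^{\beta_\F}\Omega_\F^{a_j,\alpha+1}(n_j)\longrightarrow (-1)^{\epsilon_\F}\Omega_\F^{\alpha}(x_0).
\]
The left-hand side is a product of $(1-a_j)^{\beta_\F}>0$ and a number of sign $\sigma$, so $\sigma(-1)^{\epsilon_\F}\Omega_\F^{\alpha}(x_0)\ge 0$. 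Continuity extends this to all $x\in[0,+\infty)$, so $\Omega_\F^{\alpha}$ has constant (weak) sign on the half-line. At $x=0$, Lemma \ref{v0le} already gives $\Omega_\F^{\alpha}(0)\neq 0$ (our hypothesis $\alpha\ne -1,-2,\ldots$ is exactly what rules out the exceptional negative integers appearing in (\ref{v0l})), so weak sign plus nonvanishing at the endpoint suffices on a neighborhood of $0$.

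The genuinely delicate step is upgrading the weak inequality $\sigma(-1)^{\epsilon_\F}\Omega_\F^{\alpha}(x_0)\ge 0$ to strict inequality for $x_0>0$: any zero must have even multiplicity, and the density argument above cannot exclude such tangential zeros by itself. The plan to handle this obstacle is inductive, using the one-step Darboux factorization of Lemma \ref{lfel} (or its $F_1$ analogue): removing one element from $F_1$ or $F_2$ reduces to a pair $\F'$ with $|\F'|=k-1$, and Lemma \ref{ladm}(4) together with Lemma \ref{ladm}(2)--(3) guarantees that the admissibility hypothesis is preserved (or reduces, in the base case $F_1=\emptyset$ with $\alpha+1>0$, to the classical positivity of $L_f^{\alpha+j}(-x)$ on $[0,+\infty)$ for $\alpha>-1$). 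In the base case the determinant (\ref{defhomf2v}) can be analyzed directly, and the inductive step propagates non-vanishing on $[0,+\infty)$ through the first-order operator $A_{\F}$ together with the identity (\ref{rromh}) relating $\Omega_\F^{\alpha}$ to the lower polynomial $L^{\alpha;\F_\Downarrow}_{u_\F}$. The hard point I expect is verifying that the Darboux step does not introduce a new zero in $[0,+\infty)$, which is precisely where the sign information $\sigma(-1)^{\epsilon_\F}\Omega_\F^{\alpha}\ge 0$ obtained from the Meixner limit is used to exclude odd-order zeros, leaving only the even-order case, which is then ruled out by comparing degrees and leading coefficients against the inductive hypothesis.
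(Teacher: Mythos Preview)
Your opening steps---$\alpha+k>-1$ from Lemma~\ref{ladm}(1), $\Omega_\F^\alpha(0)\neq 0$ from Lemma~\ref{v0le}, and the weak-sign inequality on $[0,\infty)$ obtained by sending $a\to 1^-$ in Lemma~\ref{l3.1}---are all fine and genuinely useful. The gap is in the induction you sketch to exclude even-order zeros. First, the one-step Darboux reduction of Lemma~\ref{lfel} (delete one element of $F_1$ or $F_2$) does \emph{not} preserve admissibility: e.g.\ for $c>0$ and $F_1=\{1,2\}$, removing either element leaves $F_1=\{1\}$ or $\{2\}$, which is not admissible in the sense of Definition~\ref{dadmch}, hence not in the sense of Definition~\ref{dadm} by Lemma~\ref{ladm}(2). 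So you cannot cite Lemma~\ref{ladm}(4) (which concerns $\F_\Downarrow$, a different operation) to justify that step. Second, even if you switch to the correct reduction $\F\rightsquigarrow\F_\Downarrow$, the implication runs the wrong way: you would know $\Omega_{\F_\Downarrow}^{\alpha+s_\F}\neq 0$ on $[0,\infty)$ by induction, but nothing in your outline explains why this forces $\Omega_\F^\alpha\neq 0$; neither the operator $A_\F$ nor the identity~(\ref{rromh}) gives that direction, and ``comparing degrees and leading coefficients'' cannot rule out an even-order interior zero.

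The paper supplies exactly the missing link. Its key ``first step'' proves the contrapositive: if $\Omega_\F^\alpha(x_0)=0$ with $x_0>0$, then necessarily $\Omega_{\F_\Downarrow}^{\alpha+s_\F}(x_0)=0$. The mechanism is analytic, not algebraic: one uses the norm identity~(\ref{pf5}) to bound the total mass of the positive measure $\tau_a^c$ (built from $(m_{u_\F}^{a,c;\F})^2\,\omega_{a,c}^\F$) uniformly in $a$, and then passes to the limit via (\ref{lm1})--(\ref{lm3}) and Hurwitz to obtain, near $x_0$, the bound $\int_u^v x^{\alpha+k}e^{-x}\bigl(\Omega_{\F_\Downarrow}^{\alpha+s_\F}(x)\bigr)^2/\bigl(\Omega_\F^\alpha(x)\bigr)^2\,dx<\infty$; if $\Omega_{\F_\Downarrow}^{\alpha+s_\F}(x_0)\neq 0$ this integral diverges as $u\to x_0^+$. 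With this contrapositive in hand the induction on $\max F_1$ (using Lemma~\ref{ladm}(4)) is immediate. The base case $F_1=\emptyset$ is not just ``classical positivity of $L_f^\alpha(-x)$'' either: one still has a $k_2\times k_2$ determinant, and the paper runs a second induction on $k_2$, combining the same integrability step with a rank argument (Lemma~\ref{rmc}) applied to the matrix $\bigl((-1)^j(L_f^\alpha)^{(j)}(-x_0)\bigr)_{f\in F_2,\,0\le j\le m}$ to derive a contradiction. Your weak-sign observation is compatible with, but not a substitute for, this integral argument.
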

\begin{proof}

First of all, the part 1 of Lemma \ref{ladm} gives that $\alpha +k>-1$.

Lemma \ref{v0le} shows that for $\alpha \not =-1, -2, \cdots $, $\Omega_\F^\alpha (0)\not =0$. Hence it is enough to prove
that $\Omega_\F ^\alpha \not=0$ for $x>0$.

Write $c=\alpha +1$. For $0<a<1$, consider the measure $\tau _a^c$ defined by
$$
\tau _a^c=(1-a)^{c-k}\sum _{x=0}^\infty\frac{a^x\Gamma(x+c+k)(m_{u_\F}^{a,c;\F}(x))^2}{x!\Omega _\F^{a,c}(x)\Omega_\F^{a,c}(x+1)}\delta {y_{a,x}},
$$
where
\begin{equation}\label{defya}
y_{a,x}=(1-a)x.
\end{equation}
Since $\alpha+1=c$ and $\F$ are admissible, we have that the measure $\tau_a^c$ is positive (Theorem \ref{th4.5}).

Consider the positive integer $s_\F$ and the pair $\F_\Downarrow$ defined by (\ref{defs0f}) and (\ref{deffd}), respectively.
We need the following limits
\begin{align}\label{lm1}
\lim _{a\to 1^-}(1-a)^{u_\F+k_1(1-k_2)}\Omega _\F^{a,c}(x/(1-a))&=\epsilon _0\Omega _\F^\alpha (x),\\\label{lm11}
\lim _{a\to 1^-}(1-a)^{u_\F+k-(k_1+1)k_2}\Omega _\F^{a,c}(x/(1-a)+1)&=\epsilon _0\Omega _\F^\alpha (x),\\\label{lm2}
\lim _{a\to 1^-}(1-a)^{u_\F-(k_1+1)k_2}r_{u_F}^{a,c;\F}(x/(1-a))&=\epsilon _1\Omega _{\F_{\Downarrow}}^{\alpha +s_\F} (x),\\\label{lm3}
\lim _{a\to 1^-}\frac{(1-a)^{c+k-1}a^{x/(1-a)}\Gamma(x/(1-a)+c+k)}{\Gamma (x/(1-a)+1)}&=x^{\alpha+k}e^{-x},
\end{align}
uniformly in compact sets of $(0,+\infty )$, where the $\epsilon$'s are the signs
$$
\epsilon_0=(-1)^{\sum_{f\in F_1}f},\quad \epsilon_1=(-1)^{\sum_{f\in F_1}f+\binom{s_\F}{2}+s_\F k_1}.
$$
The  first limit is (\ref{lim2}). The second one is a consequence of
(\ref{lim3}). The third one is a consequence of (\ref{lim1}) and (\ref{rromh}). The forth one is consequence of the asymptotic behavior of $\Gamma(z+u)/\Gamma(z+v)$ when $z\to\infty$ (see \cite{EMOT}, vol. I (4), p. 47).

We proceed in three steps.

\bigskip

\noindent
\textit{First step.} Assume that $\alpha +1$ and $\F$ are admissible and that there exists $x_0>0$ with $\Omega _\F^\alpha (x_0) =0$. Then $\Omega _{\F _{\Downarrow}}^{\alpha +s_\F} (x_0)=0$.

We take a real number $v$ with $x_0<v$ such that $\Omega _\F^\alpha (x) \not =0$ for $x\in (x_0,v]$. For a real number $u$ with $x_0<u<v$, write $I=[u,v]$. Then $\Omega _\F^\alpha $ does not vanish in $I$. Applying Hurwitz's Theorem to the limits (\ref{lm1}) and (\ref{lm11}) we can choice a contable set $X=\{a_n: n\in \NN \}$ of numbers in $(0,1)$ with $\lim_n a_n=1$ such that $\Omega _\F^{a,c}(x/(1-a))\Omega _\F^{a,c}(x/(1-a)+1)\not =0$, $x\in I$ and $a\in X$.

Hence, we can combine the limits (\ref{lm1}), (\ref{lm11}), (\ref{lm2}) and (\ref{lm3}) to get
\begin{equation}\label{lm4}
\lim _{a\to 1;a\in X}h_a(x)=\epsilon_3h(x),\quad \mbox{uniformly in $I$},
\end{equation}
where
\begin{align*}
h_a(x)&=(1-a)^{c-k-1}\frac{a^{x/(1-a)}\Gamma(x/(1-a)+c+k)(m_{u_F}^{a,c;\F}(x/(1-a)))^2}{\Gamma (x/(1-a)+1)\Omega_\F^{a,c} (x/(1-a))\Omega_\F^{a,c} (x/(1-a)+1)},\\
h(x)&=\frac{x^{\alpha +k}e^{-x}(\Omega_{\F_{\Downarrow }}^{\alpha +s_\F})^2(x)}{(\Omega_\F^\alpha )^2(x)},
\end{align*}
and $\epsilon_3$ is again a sign.

We now prove that
\begin{equation}\label{lm5}
\lim _{a\to 1 ;a\in X}\tau _a^c(I)=\epsilon _3\int_{I}h(x)dx.
\end{equation}
To do that, write $I_a=\{ x\in \NN: u/(1-a)\le x\le v/(1-a)\}$.
The numbers $y_{a,x}$, $x\in I_a$, form a partition of the interval $I$ with $y_{a,x+1}-y_{a,x}=(1-a)$ (see (\ref{defya})). Since the function $h$ is continuous  in the interval $I$, we get that
$$
\int_{I}h(x)dx=\lim_{a\to 1; a\in X}S_a,
$$
where $S_a$ is the Cauchy sum
$$
S_a=\sum_{x\in I_a}h(y_{a,x})(y_{a,x+1}-y_{a,x}).
$$
On the other hand, since $x\in I_a$ if and only if $u\le y_{a,x}\le v$ (\ref{defya}), we get
\begin{align*}
\tau _a^c(I)&=(1-a)^{c-k}\sum _{x\in I_a}\frac{a^x\Gamma(x+c+k)(m_{u_\F}^{a,c;\F}(x))^2}{x!\Omega _\F^{a,c}(x)\Omega_\F^{a,c}(x+1)}=(1-a)\sum _{x\in I_a}h_a(y_{a,x})\\
&=\sum _{x\in I_a}h_a(y_{a,x})(y_{a,x+1}-y_{a,x}).
\end{align*}
The limit (\ref{lm5}) now follows from the uniform limit (\ref{lm4}).

The identity (\ref{pf5}) says that $\tau _a^c(\RR)=a^{k_1-2k}\gamma_\F^c$, where the positive constant
$$
\gamma_\F^c=\rho_{a,c}^\F(u_\F)=\Gamma(c)\prod_{f\in F_1}(-f)\prod_{f\in F_2}(c+f)
$$
does not depend on $a$.
This gives $\tau _a^c(I)\le a^{k_1-2k}\gamma_\F^c$. And so from the limit (\ref{lm5}) we get
$$
\int_{I}h(x)dx \le \frac{\gamma_\F^c}{\epsilon_3}.
$$
That is
$$
\int _u^v\frac{x^{\alpha +k}e^{-x}(\Omega_{\F_{\Downarrow }}^{\alpha +s_\F})^2(x)}{(\Omega_\F^\alpha )^2(x)}dx\le \frac{\gamma_\F^c}{\epsilon _3}.
$$
On the other hand, if $\Omega _{\F _{\Downarrow}}^{\alpha +s_\F} (x_0)\not =0$, since $\Omega_\F^\alpha (x_0)=0$ we get
$$
\lim _{u\to x_0^+}\int _u^v\frac{x^{\alpha +k}e^{-x}(\Omega_{F_{\Downarrow }}^{\alpha +s_\F})^2(x)}{(\Omega_F^\alpha )^2(x)}dx=+\infty.
$$
Hence $\Omega _{\F _{\Downarrow}}^{\alpha +s_\F} (x_0)=0$.

\bigskip

The proof of the Theorem proceeds now by induction on $\max F_1$.

\noindent
\textit{Second step.} Assume $\alpha +1$ and $\F$ are admissible and $\max F_1=-1$ (that is $F_1=\emptyset$). Then
\begin{equation}\label{pas2}
\Omega _\F^\alpha (x) \not =0,\quad x> 0.
\end{equation}

Since $F_1=\emptyset$, the part 3 of Lemma \ref{ladm} implies that the assumption $\alpha +1$ and $\F$ are admissible is equivalent to the assumption $\alpha >-1$.

We prove this step by induction on $k_2$. For $k_2=1$, we have that $F_2$ is a singleton $F_2=\{f\}$, and then $\Omega _\F^\alpha (x)=L^\alpha_f(-x)$. The usual properties of the zeros of Laguerre polynomials ($\alpha >-1$) imply (\ref{pas2}).

Assume now that (\ref{pas2}) holds for $k_2\le s$ and $\alpha >-1$, and take a finite set of positive integers $F_2$, with $k_2=s+1$ elements. We notice that according to the definition of $s_{F_1}$ (\ref{defs0}) for $F_1=\emptyset,$ we have $s_{F_1}=1$. Hence we also have $s_\F=1$ (see (\ref{defs0f})). If
there exists $x_0>0$ such that $\Omega _\F^\alpha (x_0)=0$, using the first step, we get that also $\Omega _{\F _{\Downarrow}}^{\alpha +1} (x_0)=0$. Since $F_1=\emptyset$, we have $\F=\F _{\Downarrow}$ (see (\ref{deffd})), and hence, we can conclude that $\Omega _\F^{\alpha +j} (x_0)=0$, $j=0,1,2,\ldots $

For a positive integer $m\ge \max F_2+1\ge s+1$ consider the $(s+1)\times m$ matrix
$$
M=\left(
  \begin{array}{@{}c@{}cccc@{}c@{}}
    \dosfilas{ L_{f}^{\alpha}(-x_0) & -(L_{f}^{\alpha})'(-x_0) & \cdots & (-1)^m(L_{f}^{\alpha })^{(m)}(-x_0) }{f\in F_2}
  \end{array}
  \right) .
$$
Write $c_i$, $i=1,\ldots, m$, for the columns of $M$ (from left to right). For $j\ge 0$, consider the $(s+1)\times s$ submatrix $M_j$ of $M$ formed by the consecutive columns $c_{j+i}$, $i=1,\cdots s$, of $M$. Using (\ref{defhomf2v}), we see that the minor of $M_j$ formed by its first $s$ rows is equal to $\Omega _{\F_{2,\{ s+1\} }}^{\alpha +j}(x_0)$ where the pair $\F_{2,\{ s+1\} }$ is defined by (\ref{deff2}). Since the set $F_2\setminus {\{ f^2_{s+1}\}}$ has $s$ elements and $\alpha +j>-1$, the induction hypothesis says that $\Omega _{\F_{2,\{ s+1\} }}^{\alpha +j}(x_0)\not =0$, and hence
the columns $c_{j+i}$, $i=1,\cdots s$, of $M$ are linearly independent. On the other hand, the consecutive columns $c_{j+i}$, $i=1,\cdots s+1$, of $M$ are linearly dependent because its determinant is equal to  $\Omega _{\F }^{\alpha +j}(x_0)=0$. Using Lemma \ref{rmc}, we conclude that $\rank M=s$.
Then there exist numbers $e_f$, $f\in F_2$, not all zero such that the  polinomial $p(x)=\sum_{f\in F_2}e_fL_{f}^{\alpha}(x)$ is non null and has a zero of multiplicity $m$ in $-x_0$. But  the polynomial $p$ has degree at most $\max F_2$, and since $m\ge \max F_2+1>\deg p$, this shows that $p=0$, which it is a contradiction. This proves the second step.

\bigskip

Assume now that $\alpha +1$ and $\F$ are admissible and
\begin{equation}\label{pas3}
\Omega _\F^\alpha (x) \not =0,\quad x> 0,
\end{equation}
holds for $\max F_1\le s$.

\bigskip

\noindent
\textit{Third step.} If $\max F_1=s+1$, then (\ref{pas3}) also holds.

Consider the pair $\F _{\Downarrow}=\{(F_1)_{\Downarrow},F_2\}$ defined by (\ref{deffd}).
Since $F_1\not =\emptyset$, we have that $\max (F_1)_{\Downarrow}\le s $. The part 4 of Lemma \ref{ladm} says that if $\alpha +1$ and $\F$ are admissible then $\alpha +1+s_\F$ and $\F _{\Downarrow}$ are admissible as well. The induction hypothesis (\ref{pas3}) then says that
$\Omega _{\F _{\Downarrow}}^{\alpha +s_\F} (x) \not =0$ for $x> 0$. The first step then gives that also $\Omega _\F^\alpha (x) \not =0$,
for $x> 0$.
\end{proof}

\bigskip
We guess that the converse of the previous Theorem is true. However, the condition
$\Omega_\F ^\alpha (x)\not =0$, $x\ge 0$, is not enough to guarantee the admissibility of $\alpha +1$ and $\F$. Indeed, consider $F_1=\{ 1\}$, $F_2=\emptyset$ and $\F =(F_1,F_2)$. The definition (\ref{defadm}) straightforwardly gives that $\alpha +1$ and $\F$ are admissible if and only if $-2<\alpha<-1$. On the other hand, it is also easy to see that $\Omega _\F^\alpha (x)=\alpha+1-x$, Hence $\Omega_\F^\alpha \not =0$, $x\ge 0$, as far as
$\alpha <-1$. Hence for $\alpha <-2$, $\Omega_\F^\alpha \not =0$, $x\ge 0$, but $\alpha +1$ and $\F$ are not admissible.

\begin{theorem}\label{th6.3} Given a real number $\alpha \not =-1,-2,\cdots $ and a pair $\F$ of finite sets of positive integers, if $\alpha +1$ and $\F$ are admissible then the polynomials $L_n^{\alpha ;\F}$, $n\in \sigma _\F$,
 are orthogonal with respect to the positive weight
\begin{equation}\label{molax}
\omega_{\alpha;\F}(x)=\frac{x^{\alpha +k}e^{-x}}{(\Omega_\F^\alpha(x))^2},\quad x>0,
\end{equation}
and their linear combinations  are dense in  $L^2(\omega_{\alpha; \F})$. Hence $L_n^{\alpha ;\F}$, $n\in \sigma _\F$, are
exceptional Laguerre polynomials.
\end{theorem}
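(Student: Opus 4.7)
The proof breaks into three parts: positivity of the weight, orthogonality, and completeness.

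For positivity, admissibility plus Lemma \ref{lem6.2} yields $\alpha+k>-1$ and $\Omega_\F^\alpha(x)\ne 0$ on $[0,\infty)$, with $\Omega_\F^\alpha(0)\ne 0$ guaranteed by Lemma \ref{v0le}. Hence $\omega_{\alpha;\F}$ is strictly positive on $(0,\infty)$, integrable near $0$ (since $\alpha+k+1>0$), and dominated by $e^{-x}$ at infinity, so all moments of $\omega_{\alpha;\F}$ are finite and in particular every polynomial belongs to $L^2(\omega_{\alpha;\F})$.

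For orthogonality I would verify that $D_\F$ from Theorem \ref{th5.1} is formally symmetric with respect to $\omega_{\alpha;\F}$. Writing $D_\F p=w^{-1}(xwp')'+h_0p$ with $w=\omega_{\alpha;\F}$ requires $h_1=1+x(\log w)'$, and a direct computation yields $1+x(\log\omega_{\alpha;\F})'=\alpha+k+1-x-2x\Omega_\F'/\Omega_\F$, matching (\ref{jph1}). The boundary terms in the ensuing integration by parts vanish at $0$ (because $x\omega_{\alpha;\F}\sim x^{\alpha+k+1}/(\Omega_\F^\alpha(0))^2$ with $\alpha+k+1>0$) and at $+\infty$ (by exponential decay), so $D_\F$ is symmetric on the polynomial space. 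Since $L_n^{\alpha;\F}$ is an eigenfunction of $D_\F$ with eigenvalue $-n$ and these eigenvalues are distinct on $\sigma_\F$, the polynomials are pairwise orthogonal.

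For completeness, which is the main obstacle, I plan to transfer Theorem \ref{th4.5} to the Laguerre setting by a limiting argument as $a\to 1^-$. Set $c=\alpha+1$; by Lemma \ref{ladm} the pair $c$ and $\F$ is admissible, so $(m_n^{a,c;\F})_{n\in\sigma_\F}$ is complete in $L^2(\omega_{a,c}^\F)$ for every $0<a<1$. Under the scaling $y=(1-a)x$ and using the limits (\ref{lim1})--(\ref{lim3}) together with (\ref{lm3}), the appropriately renormalized discrete measure $\omega_{a,c}^\F$ converges as a Riemann sum to $\omega_{\alpha;\F}\,dy$ on compact subsets of $(0,\infty)$, while the corresponding rescaled Meixner polynomials converge uniformly on compacts to $L_n^{\alpha;\F}$. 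Given $f\in L^2(\omega_{\alpha;\F})$ with $\langle f,L_n^{\alpha;\F}\rangle_{\omega_{\alpha;\F}}=0$ for all $n\in\sigma_\F$, I would approximate $f$ by a continuous compactly supported $\tilde f$, apply the discrete Parseval identity of Theorem \ref{th4.5} to the discretizations $\tilde f_a(x)=\tilde f((1-a)x)$ in $L^2(\omega_{a,c}^\F)$, and let $a\to 1^-$ to obtain $\|\tilde f\|_{\omega_{\alpha;\F}}^2=\sum_{n\in\sigma_\F}|\langle \tilde f,L_n^{\alpha;\F}\rangle|^2/\|L_n^{\alpha;\F}\|^2$. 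The orthogonality assumption on $f$ then forces $\tilde f\to 0$ in $L^2(\omega_{\alpha;\F})$, hence $f=0$.

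The chief technical difficulty is ensuring uniform-in-$a$ tail control for the discrete Parseval sums, particularly near $x=0$ where the weight is singular. Should this argument prove too delicate, I would fall back on induction on $k=k_1+k_2$ via the Darboux factorization of Lemma \ref{lfel}: the first-order operator $A_\F$ sends $L_{n-f^2_{k_2}+k_2-1}^{\alpha;\F_{2,\{k_2\}}}$ to $L_n^{\alpha;\F}$, and together with its formal adjoint $B_\F$ implements an isometry (up to scalar) from $L^2(\omega_{\alpha;\F_{2,\{k_2\}}})$ onto a dense subspace of $L^2(\omega_{\alpha;\F})$, reducing completeness inductively to the classical Laguerre base case $k=0$.
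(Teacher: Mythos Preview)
Your treatment of positivity and orthogonality matches the paper's: finiteness of moments from $\alpha+k>-1$, and symmetry of $D_\F$ against $\omega_{\alpha;\F}$ followed by distinctness of eigenvalues.

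For completeness, however, the paper takes a much shorter and purely static route that avoids both of your strategies. The key algebraic input is Lemma~\ref{cpap}: a polynomial $p$ lies in $\Aa=\operatorname{span}\{L_n^{\alpha;\F}:n\in\sigma_\F\}$ if and only if $(-2xp'+(x-\alpha-k)p)\Omega_\F'+xp\Omega_\F''$ is divisible by $\Omega_\F$. Applied to $p=(\Omega_\F^\alpha)^2q$ for arbitrary $q\in\PP$, this divisibility is automatic, so $(\Omega_\F^\alpha)^2\PP\subset\Aa$. It then suffices to show that $(\Omega_\F^\alpha)^2\PP$ is dense in $L^2(\omega_{\alpha;\F})$. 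Writing $r=\deg\Omega_\F^\alpha$, one observes that $(1+x^r)/\Omega_\F^\alpha(x)$ is bounded away from zero on $[0,\infty)$ (using Lemma~\ref{lem6.2}), so density reduces to the density of $(1+x^r)\PP$ in $L^2(x^{\alpha+k}e^{-x})$, which in turn is just the density of polynomials in $L^2((1+x^r)x^{\alpha+k}e^{-x})$, a determinate measure. No limits in $a$, no induction on $k$, no tail estimates.

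Your primary plan---passing the discrete Parseval identity of Theorem~\ref{th4.5} to the $a\to1^-$ limit---is not the paper's, and the obstacle you flag is real: interchanging the limit in $a$ with the infinite sum over $n\in\sigma_\F$ requires a uniform bound on the Fourier tails that you have not supplied and that does not follow from the pointwise limits~(\ref{lim1})--(\ref{lm3}) alone. Your fallback via Lemma~\ref{lfel} could in principle be made to work, but the claim that $A_\F,B_\F$ implement an $L^2$-isometry needs an explicit adjointness computation (with boundary terms checked at $0$ and $\infty$) and a verification that $B_\F$ has dense range; as written it is a sketch, not a proof. Either route is substantially more laborious than the paper's two-line reduction via Lemma~\ref{cpap} and determinacy.
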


\begin{proof}
First of all, notice that the positive weight (\ref{molax}) has finite moments of any order because since $\alpha +1$ and $\F$ are admissible we have $\alpha+k> -1$ (part 1 of Lemma \ref{ladm}).

Write $\Aa $ for the linear space generated by the polynomials $L_n^{\alpha ;\F}$, $n\in \sigma _\F$.
Using Lemma 2.6 of \cite{duch}, it is easy to check that the second order differential operator $D_\F$ (\ref{sodolax}) is symmetric with respect to the pair $(\omega _{\alpha ;\F}, \Aa)$ (\ref{molax}). Since the polynomials $L_n^{\alpha ;F}$, $n\in \sigma _\F$, are eigenfunctions of $D_\F$ with different eigenvalues Lemma 2.4 of \cite{duch} implies that they are orthogonal with respect to $\omega _{\alpha ;F}$.

In order to prove the completeness of $L_n^{\alpha;\F}$, $n\in \sigma _\F$, we proceed in two steps.

\bigskip
\noindent
\textsl{Step 1.} For each $r>0$ and $\alpha >-1$, the linear space $\{(1+x)^rp: p\in \PP\}$ is dense in $L^2(x^\alpha e^{-x})$.

Since $1+x^r>0$, $x>0$, this is equivalent to the density of $\PP$ in $L^2((1+x^r)x^\alpha e^{-x})$. But this follows straightforwardly taking into account that $(1+x^r)x^\alpha e^{-x}dx$, $x>0$, is a determinate measure.

\bigskip
\noindent
\textsl{Step 2.} $\Aa$ is dense in $L^2(\omega _{\alpha;\F})$.

Take a function $f\in L^2(\omega _{\alpha;\F})$. Write $r=\deg(\Omega _\F^\alpha)$ and define the function
$g(x)=(1+x^r)f(x)/(\Omega _\F^\alpha(x))^2$. Since $\alpha +1$ and $\F$ are admissible, we get from the previous lemma that
$\alpha +k>-1$ and $\Omega _\F^\alpha(x)\not =0$, $x\ge 0$. Hence $g\in L^2(x^{\alpha+k}e^{-x})$. Given $\epsilon >0$ and using the first step, we get a polynomial $p$ such that
\begin{equation}\label{dmp}
\int \vert g(x)-(1+x^r)p(x)\vert ^2x^{\alpha+k}e^{-x}dx<\epsilon.
\end{equation}
Write $\gamma=\inf \{(1+x^r)/\Omega _\F^\alpha(x),x\ge 0\}$. We then get
\begin{align*}
\int \vert g(x)-(1+x^r)p(x)\vert ^2 x^{\alpha+k}e^{-x}dx&=\int \left\vert \frac{1+x^r}{\Omega _\F^\alpha(x)}\right\vert ^2\vert f(x)-(\Omega _\F^\alpha(x))^2p(x)\vert ^2\omega _{\alpha;\F}dx\\
&\ge \gamma ^2 \int  \vert f(x)-(\Omega _\F^\alpha(x))^2p(x)\vert \omega _{\alpha;\F}(x)dx .
\end{align*}
Using (\ref{dmp}), we can conclude that the linear space $\{(\Omega _\F^\alpha(x))^2p: p\in \PP\}$ is dense in $L^2(\omega _{\alpha;\F})$.

Lemma \ref{cpap} gives that $\{(\Omega _\F^\alpha(x))^2p: p\in \PP\}\subset \Aa $. This proves the second step and the Theorem.

\end{proof}

\bigskip
Proceeding as in the first step of  Theorem \ref{lem6.2}, one can
find the norm of the polynomials $L_n^{\alpha;\F}$, $n\in
\sigma_\F$, from the norm of the polynomials $m_n^{a,c;\F}$ (see
(\ref{pf5})).

\begin{corollary} Given a real number $\alpha \not =-1,-2,\cdots $ and a pair $\F$ of finite sets of positive integers, assume that $\alpha +1$ and $\F$ are admissible.
Then for $n\in \sigma _\F$, we have
$$
\int _0^\infty (L_n^{\alpha ;\F}(x))^2 \frac{x^{\alpha
+k}e^{-x}}{(\Omega_\F^\alpha(x))^2}dx=\frac{\p_\F(n-u_\F)\Gamma(n-u_\F+\alpha+1)}{(n-u_\F)!},
$$
where $\p _\F(x)$ is the polynomial defined by $\p_\F(x)=\prod_{f\in
F_1}(x -f)\prod_{f\in F_2}(x+\alpha+f+1)$.
\end{corollary}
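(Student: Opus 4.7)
The plan is to derive the formula by transferring the known norm for the exceptional Meixner polynomials, given by (\ref{pf5}), to the Laguerre setting via the basic limit $a\to 1^-$, following exactly the strategy used in the first step of the proof of Lemma \ref{lem6.2}. Specializing (\ref{pf5}) to $r=n\in\sigma_\F$ and writing out $\rho_{a,c}^\F(n)$ using (\ref{mraf}), one has
\begin{equation*}
\sum_{x=0}^\infty \frac{a^x\Gamma(x+c+k)(m_n^{a,c;\F}(x))^2}{x!\,\Omega_\F^{a,c}(x)\Omega_\F^{a,c}(x+1)}
=\frac{a^{k_1-2k+n-u_\F}\Gamma(n+c-u_\F)\,\mathtt{p}_\F^{\,c}(n-u_\F)}{(1-a)^{c+2n-2u_\F-k}(n-u_\F)!},
\end{equation*}
where $\mathtt{p}_\F^{\,c}(m)=\prod_{f\in F_1}(m-f)\prod_{f\in F_2}(m+c+f)$, which as $a\to 1$ (with $c=\alpha+1$) converges to $\mathtt{p}_\F(n-u_\F)$.

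The heart of the argument will be the change of variable $x\mapsto y=(1-a)x$ in the sum, turning it into a Riemann sum with step $\Delta y=1-a$. Collecting the asymptotic behaviour in $1-a$ of the individual factors using the limits already established in the proof of Lemma \ref{lem6.2} — namely (\ref{lim1}) for $m_n^{a,c;\F}(y/(1-a))$, the twin limits (\ref{lm1}) and (\ref{lm11}) for $\Omega_\F^{a,c}$ at $y/(1-a)$ and $y/(1-a)+1$, and (\ref{lm3}) for the $\Gamma$-ratio — the integrand of the Riemann sum, multiplied by $(1-a)$, converges locally uniformly on $(0,\infty)$ to
\begin{equation*}
(1-a)^{-(-2n+k+2u_\F-c)}\,\frac{y^{\alpha+k}e^{-y}(L_n^{\alpha;\F}(y))^2}{(\Omega_\F^\alpha(y))^2}.
\end{equation*}
A direct bookkeeping of the exponents of $1-a$ yields $-2n+k+2u_\F-c$ on the left-hand side and $-(c+2n-2u_\F-k)$ on the right-hand side of (\ref{pf5}): these coincide, so after dividing by the common power of $(1-a)$ the identity of the corollary will follow in the limit $a\to 1^-$.

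The step I expect to be the most delicate is to upgrade the pointwise/locally uniform convergence of the integrand to convergence of the Riemann sums themselves to the integral on $(0,\infty)$. This requires an integrable dominant independent of $a$ near both endpoints: near $y=0$ the factor $y^{\alpha+k}$ is integrable because admissibility gives $\alpha+k>-1$ (part (1) of Lemma \ref{ladm}) and $\Omega_\F^\alpha(0)\neq 0$ by Lemma \ref{v0le}, while near $y=+\infty$ the factor $a^{y/(1-a)}\Gamma(y/(1-a)+c+k)/\Gamma(y/(1-a)+1)$ decays like $e^{-y}$ times a polynomial, uniformly for $a$ in a neighbourhood of $1$, so that exponential dominated convergence controls the tail. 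Once this justification is in place, matching the constants as described above produces the stated evaluation of $\int_0^\infty (L_n^{\alpha;\F})^2 \omega_{\alpha;\F}\,dx$.
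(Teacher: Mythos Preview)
Your proposal is correct and follows exactly the route the paper indicates: the paper's proof of this corollary is the single sentence ``Proceeding as in the first step of [Lemma \ref{lem6.2}], one can find the norm of the polynomials $L_n^{\alpha;\F}$ from the norm of the polynomials $m_n^{a,c;\F}$ (see (\ref{pf5})),'' and you have spelled out precisely that limit argument, including the matching of the powers of $(1-a)$ and the dominated-convergence control of the tails that the compact-interval argument in Lemma \ref{lem6.2} does not by itself supply.
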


\section{Appendix}
When the determinants $\Omega _\F ^{a,c} (n)\not =0$ (\ref{defom}), $n\ge 0$ (or equivalently, $\Phi_n^\F\not =0$ (\ref{defphme}), $n\ge 0$),
the following alternative construction of the polynomial $q_n^{a,c;\F}$ (\ref{defqnme}) has been given in \cite{DdI}.

For a pair $\F=(F_1,F_2)$ of finite sets of positive integers, consider the involuted sets $I(F_1)=G_1$ and $I(F_2)=G_2$,
where the involution $I$ is defined by (\ref{dinv}) and the number $v_\F$ defined by (\ref{defvf}). Write $m=m_1+m_2$, where $m_1$, $m_2$ are the number of elements of $G_1$ and $G_2$, respectively.

Assume that $\Omega _\F^{a,c} (n)\not =0$, $n\ge 0$, and write $\tilde c=c+M_{F_1}+M_{F_2}+2$, where $M_F$ denotes de maximum element of $F$.
Using  Theorem 1.1 of \cite{DdI}, we have
\begin{equation}\label{qusmei}
q_n^{a,c;\F}(x)=\alpha_n
\left|
  \begin{array}{@{}c@{}cccc@{}c@{}}
    &m_n^{a,\tilde c}(x-v_\F)&(\frac{a}{a-1})m_{n-1}^{a,\tilde c}(x-v_\F)&\cdots &(\frac{a}{a-1})^{m}m_{n-m}^{a,\tilde c}(x-v_\F) & \\
    \dosfilas{ m_{g}^{a,2-\tilde c}(-n-1) & am_{g}^{a,2-\tilde c}(-n) &\cdots  & a^m m_{g}^{a,2-\tilde c}(-n+m-1) }{g\in G_1} \\
    \dosfilas{ m_{g}^{1/a,2-\tilde c}(-n-1) & m_{g}^{1/a,2-\tilde c}(-n) & \cdots & m_{g}^{1/a,2-\tilde c}(-n+m-1)}{g\in G_2}
  \end{array}
  \right|,
\end{equation}
where the positive integer $v_\F$ is defined by (\ref{defvf}) and $\alpha_n$, $n\ge 0$, is certain normalization constant.

The dualities (\ref{sdm2b}) and (\ref{duaqnrn}) then provides an alternative definition of the polynomial $m_n^{a,c;\F}$, $n\ge v_\F$ (\ref{defmex}). Indeed, write $r_j^c$, $j\ge 0$, for the polynomial of degree $m$ defined by $r_j^c(x)=(c+x-m)_{m-j}(x-j+1)_{j}$, and, as before, $\tilde c=c+M_{F_1}+M_{F_2}+2$. After an easy calculation, we conclude that
\begin{equation}\label{qusmei2}
m_n^{a,c;\F}(x)=\beta_n
\left|
  \begin{array}{@{}c@{}cccc@{}c@{}}
    &r_0^{\tilde c}(x)m_{n-v_\F}^{a,\tilde c}(x)&r_1^{\tilde c}(x)m_{n-v_\F}^{a,\tilde c}(x-1)&\cdots &r_m^{\tilde c}(x)m_{n-v_\F}^{a,\tilde c}(x-m) & \\
    \dosfilas{ m_{g}^{a,2-\tilde c}(-x-1) & am_{g}^{a,2-\tilde c}(-x) &\cdots  & a^mm_{g}^{a,2-\tilde c}(-x+m-1) }{g\in G_1} \\
    \dosfilas{ m_{g}^{1/a,2-\tilde c}(-x-1) & m_{g}^{1/a,2-\tilde c}(-x) & \cdots & m_{g}^{1/a,2-\tilde c}(-x+m-1)}{g\in G_2}
  \end{array}
  \right|,
\end{equation}
where $\beta_n$, $n\ge 0$, is certain normalization constant.

When the sum of the cardinalities of the involuted sets $G_1=I(F_1)$ and $G_2=I(F_2)$ is less than the sum of the cardinalities of $F_1$ and $F_2$,
the expression (\ref{qusmei2}) will provide a more efficient way than (\ref{defmex}) for an explicit computation of the polynomials $m_n^{a,c;\F}$, $n\ge v_{\F}$.
For instance, take $F_1=\{1,\cdots, k\}$, $F_2=\{1,\cdots, k-2, k\}$. Since $I(F_1)=\{ k\}$, $I(F_2)=\{ 1, k \}$, the determinant in (\ref{qusmei2}) has order $4$
while the determinant in (\ref{defmex}) has order $2k$.

\bigskip
Assume now that $\alpha +1$ and $\F$ are admissible (\ref{defadm}). Write $c=\alpha +1$. According to Lemma \ref{l3.1}, this gives for all $0<a<1$ that $\Gamma(x+c+k)\Omega ^{a,c} _\F(x)\Omega ^{a,c} _\F(x+1)>0$ for $x\in \NN $, where $\Omega _\F^{a,c}$ is the polynomial (\ref{defom}) associated to the Meixner family. In particular $\Omega ^{a,c} _\F(x)\not =0$, for all nonnegative integer $x$.
Write $w^{\alpha ,j} _n=j!\binom{n+\alpha }{j}$ and $\tilde \alpha =\alpha +M_{F_1}+M_{F_2}+2$.
Hence, if instead of (\ref{defmexa}) we take limit in (\ref{qusmei2}), we get the following alternative expression for the polynomials $L_n^{\alpha ;F}$, $n\ge v_\F$,
\begin{equation}\label{deflaxa}
L_n^{\alpha ;\F}(x)=\gamma_n
\left|
  \begin{array}{@{}c@{}cccc@{}c@{}}
    &x^mL_{n-v_\F}^{\tilde \alpha}(x)&w_{n-v_\F}^{\tilde \alpha,1} x^{m-1}L_{n-v_\F}^{\tilde \alpha-1}(x)&\cdots &w_{n-v_\F}^{\tilde \alpha,m} L_{n-v_\F}^{\tilde \alpha -m}(x) & \\
    \dosfilas{ L_{g}^{-\tilde \alpha}(-x) & L_{g}^{-\tilde \alpha +1}(-x) &\cdots  & L_{g}^{-\tilde \alpha +m}(-x) }{g\in G_1} \\
    \dosfilas{ L_{g}^{-\tilde \alpha }(x) & (L_{g}^{-\tilde \alpha})'(x) & \cdots & (L_{g}^{-\tilde \alpha})^{(m)}(x)}{g\in G_2}
  \end{array}
  \right|,
\end{equation}
where $\gamma_n$ is certain normalization constant.

Both determinantal constructions (\ref{qusmei2}) and
(\ref{deflaxa})  for the polynomials $m_{n}^{a,c;\F}$ and
$L_n^{\alpha:\F}$, $n\in \sigma_\F$, respectively, imply a couple
of factorizations of the second order difference and differential
operators $D_\F$ (see (\ref{sodomex}) and (\ref{sodolax}),
respectively) in two first order difference and differential
operators, respectively. These factorizations are different to the
factorizations displayed in Lemmas \ref{lfe} and \ref{lfel},
respectively. We do not include the details here but both
factorizations can be worked out as Lemmas 3.7 and 5.3 of
\cite{duch}.

\bigskip

\noindent
\textit{Mathematics Subject Classification: 42C05, 33C45, 33E30}

\noindent
\textit{Key words and phrases}: Orthogonal polynomials. Exceptional orthogonal polynomial. Difference operators. Differential operators.
Meixner polynomials. Krawtchouk polynomials. Laguerre polynomials.


\begin{thebibliography}{GPT4}

\bibitem{Ad} V.E. Adler,
A modification of Crum's method,
Theor. Math. Phys. \textbf{101} (1994), 1381--1386.


\bibitem{At} F. V. Atkinson,
Discrete and continuous boundary problems,
Academic Press, NY, 1964.

\bibitem{B} S. Bochner,
\"Uber Sturm--Liouvillesche polynomsysteme,
Math. Z. \textbf{29} (1929), 730--736.


\bibitem{Ch} T. Chihara,
An introduction to orthogonal polynomials,
Gordon and Breach Science Publishers, 1978.

\bibitem{Chr} E.B. Christoffel,
\"Uber die Gaussische Quadratur und eine Verallgemeinerung derselben,
J. Reine Angew. Math. \textbf{55} (1858), 61--82.


\bibitem{DEK} S. Y. Dubov, V. M. Eleonskii, and N. E. Kulagin,
Equidistant spectra of anharmonic oscillators,
Sov. Phys. JETP \textbf{75} (1992), 47--53; Chaos 4 (1994) 47--53.


\bibitem{du0} A.J. Durán,
Orthogonal polynomials satisfying  higher order difference equations, Constr.
Approx. \textbf{36} (2012), 459-486.

\bibitem{du1} A.J. Durán,
Using $\D$-operators to construct orthogonal polynomials satisfying  higher order difference or differential equations,
J. Appr. Th. \textbf{174} (2013), 10--53.

\bibitem{du2} A.J. Durán,
Symmetries for Casorati determinants of classical discrete orthogonal polynomials,
To appear in Proc. Amer. Math. Soc.

\bibitem{du3} A.J. Durán,
Wronskian type determinants of orthogonal polynomials, Selberg type formulas and constant
term identities,
Submitted. arXiv:1207.4331.

\bibitem{duch} A.J. Durán,
Exceptional Charlier and Hermite polynomials,
Submitted. arXiv:1309.1175.

\bibitem{DdI} A.J. Durán and M.D. de la Iglesia,
Constructing bispectral orthogonal polynomials  from the classical discrete families of Charlier, Meixner and Krawtchouk,
Submitted. arXiv:1307.1326.


\bibitem{EMOT} A. Erd\'elyi, W. Magnus, F. Oberhettinger, F. G. Tricomi, (Bateman project),
Higher Trascendental Functions, Volumed I and II
McGraw Hill, New York, 1953.

\bibitem{Gant}
F. R. Gantmacher,
\textit{The theory of matrices},
Chelsea Publishing Company, New York,  1960.



\bibitem{GUKM1}
D. Gómez-Ullate, N. Kamran and R. Milson,
An extended class of orthogonal polynomials defined by a Sturm-Liouville problem,
J. Math. Anal. Appl., \textbf{359} (2009), 352--367.

\bibitem{GUKM2}
D. Gómez-Ullate, N. Kamran and R. Milson,
An extension of Bochner's problem: exceptional invariant subspaces,
J. Approx. Theory, \textbf{162} (2010), 987--1006.

\bibitem{GUKM3}
D. Gómez-Ullate, N. Kamran and R. Milson,
Exceptional orthogonal polynomials and the Darboux transformation,
J. Phys. A, \textbf{43} (2010), 434016.

\bibitem{GUKM4}
D. Gómez-Ullate, N. Kamran and R. Milson,
On orthogonal polynomials spanning a non-standard flag,
Contemp. Math., \textbf{563} (2012), 51--71.

\bibitem{GUKM5}
D. Gómez-Ullate, N. Kamran and R. Milson,
A Conjecture on Exceptional Orthogonal Polynomials,
Found. Comput. Math., \textbf{13} (2013), 615--666.

\bibitem{GUGM}
D. Gómez-Ullate, Y. Grandati and R. Milson,
Rational extensions of the quantum Harmonic oscillator and exceptional Hermite polynomials,
arXiv 1306.5143v1 [math-ph].

\bibitem{G}
Y. Grandati,
Multistep DBT and regular rational extensions of the isotonic oscillator,
Ann. Phys. (N.Y.) \textbf{327}, 2411 (2012).

\bibitem{GQ}
Y. Grandati and C. Quene,
Disconjugacy, regularity of multi-indexed rationally-extended potentials, and Laguerre
exceptional polynomials,
arXiv 1211.5308v2 [math-ph].


\bibitem{Kr} M.G. Krein,
A continual analogue of a Christoffel formula from the theory of orthogonal polynomials,
Dokl. Akad. Nauk. SSSR  \textbf{113} (1957), 970--973.


\bibitem{KLS} R. Koekoek, P. A. Lesky and L.F. Swarttouw,
Hypergeometric orthogonal polynomials and their $q$-analogues,
Springer Verlag, Berlin, 2008.

\bibitem{La} O. E. Lancaster,
Orthogonal Polynomials Defined by Difference Equations,
Am. J. Math., \textbf{63} (1941), 185--207.

\bibitem{Leo} D. Leonard,
Orthogonal polynomials, duality, and association schemes,
SIAM J. Math. Anal. \textbf{13} (1982), 656--663.

\bibitem{MR}
B. Midya and B. Roy,
Exceptional orthogonal polynomials and exactly solvable potentials in
position dependent mass Schrödinger Hamiltonians,
Phys. Lett. A, \textbf{373} (2009), 4117--4122.

\bibitem{NSU} A.F. Nikiforov, S.K. Suslov and V.B. Uvarov,
Classical orthogonal polynomials of a discrete variable,
Springer Verlag, Berlin, 1991.

\bibitem{OS0}
S. Odake and R. Sasaki,
Infinitely many shape invariant potentials and new orthogonal polynomials,
Phys. Lett. B, \textbf{679} (2009), 414--417.


\bibitem{OS}
S. Odake and R. Sasaki,
Infinitely many shape invariant discrete quantum mechanical systems and new exceptional orthogonal polynomials related to the Wilson and Askey-Wilson
polynomials,
Phys. Lett. B, \textbf{682} (2009), 130--136.


\bibitem{OS2}
S. Odake and R. Sasaki,
The Exceptional $(X_\ell)$ $(q)$-Racah Polynomials,
Prog. Theor. Phys. \textbf{125} (2011), 851--870.

\bibitem{OS3}
S. Odake and R. Sasaki,
Exactly Solvable Quantum Mechanics and Infinite Families of Multi-indexed Orthogonal Polynomials,
Phys. Lett. B  \textbf{702} (2011), 164--170.

\bibitem{OS4}
S. Odake and R. Sasaki,
Dual Christoffel transformations,
Prog. Theor. Phys. \textbf{126} (2011), 1--34.


\bibitem{OS5}
S. Odake and R. Sasaki,
Multi-indexed $q$-Racah Polynomials,
J. Phys. A \textbf{45} (2012) 385201 (21pp).

\bibitem{OS6}
S. Odake and R. Sasaki,
Multi-indexed Wilson and Askey-Wilson Polynomials,
J. Phys. A \textbf{46} (2013) 045204 (22pp).

\bibitem{Qu}
C. Quesne,
Exceptional orthogonal polynomials, exactly solvable potentials and supersymmetry,
J. Phys. A, \textbf{41} (2008), 392001--392007.


\bibitem{STZ}
R. Sasaki, S. Tsujimoto and A. Zhedanov,
Exceptional Laguerre and Jacobi polynomials and the corresponding potentials through Darboux-Crum transformations,
J. Phys. A: Math. and Gen., \textbf{43} (2010), 315204.

\bibitem{Sz} G. Szeg\"o,
Orthogonal Polynomials,
American Mathematical Society, Providence, RI, 1959.

\bibitem{Ta}
T. Tanaka,
N-fold Supersymmetry and quasi-solvability associated with X2-Laguerre polynomials,
arXiv:0910.0328.


\bibitem{YZ}
O. Yermolayeva and A. Zhedanov,
Spectral transformations and generalized Pollaczek polynomials,
Methods and Appl. Anal. \textbf{6}, 261--280 (1999).



     \end{thebibliography}
     \end{document}